\newcommand{\RR}{\mathbb{R}}
\newcommand{\RRtarski}{\mathfrak{R}_0}
\newcommand{\RRexp}{\mathfrak{R}_{\exp}}
\newcommand{\RRexpsin}{\mathfrak{R}_{\exp,\sin}}
\newcommand{\RRtheory}{\mathfrak{R}}
\newcommand{\CC}{\mathbb{C}}
\newcommand{\QQ}{\mathbb{Q}}
\newcommand{\QQbar}{\overline{\mathbb{Q}}}
\newcommand{\NN}{\mathbb{N}}
\newcommand{\ZZ}{\mathbb{Z}}
\newcommand{\ie}{\emph{i.e.}}
\newcommand{\Reach}{\mathcal{R}}
\newcommand{\Control}{\mathcal{C}}
\newcommand{\Spectrum}{\sigma}
\DeclareMathOperator{\diag}{diag}
\DeclareMathOperator{\sgn}{sgn}
\DeclareMathOperator{\argmax}{arg\,max}
\newcommand{\dd}{\operatorname{d}\hspace{-0.3ex}}
\DeclareMathOperator{\Span}{span}
\DeclarePairedDelimiter{\Angle}{\langle}{\rangle}
\DeclarePairedDelimiter{\set}{\{}{\}}
\DeclarePairedDelimiter{\twonorm}{\lVert}{\rVert}
\definecolor{darkgreen}{rgb}{0.1,0.7,0.1}
\theoremstyle{acmplain}
\newtheorem{theorem}{Theorem}[section]
\newtheorem{proposition}[theorem]{Proposition}
\newtheorem{lemma}[theorem]{Lemma}
\theoremstyle{acmdefinition}
\newtheorem{definition}[theorem]{Definition}
\theoremstyle{plain}
\newcommand{\apxref}[2]{\ifthenelse{\boolean{fullversion}}{#1}{#2}}
\begin{document}

\title{On the Decidability of Reachability in Continuous Time Linear Time-Invariant Systems}
\apxref{}{
    \titlenote{Full version available at \url{https://arxiv.org/abs/2006.09132}. Proofs marked as }
}

\author{Mohan Dantam}
\email{saitejadms@gmail.com}
\author{Amaury Pouly}
\email{amaury.pouly@irif.fr}
\orcid{0000-0002-2549-951X}
\affiliation{\institution{Université de Paris, IRIF, CNRS} \postcode{F-75013} \city{Paris} \country{France}}

\begin{CCSXML}
<ccs2012>
 <concept>
  <concept_id>10010520.10010553.10010562</concept_id>
  <concept_desc>Computer systems organization~Embedded systems</concept_desc>
  <concept_significance>500</concept_significance>
 </concept>
 <concept>
  <concept_id>10010520.10010575.10010755</concept_id>
  <concept_desc>Computer systems organization~Redundancy</concept_desc>
  <concept_significance>300</concept_significance>
 </concept>
 <concept>
  <concept_id>10010520.10010553.10010554</concept_id>
  <concept_desc>Computer systems organization~Robotics</concept_desc>
  <concept_significance>100</concept_significance>
 </concept>
 <concept>
  <concept_id>10003033.10003083.10003095</concept_id>
  <concept_desc>Networks~Network reliability</concept_desc>
  <concept_significance>100</concept_significance>
 </concept>
</ccs2012>
\end{CCSXML}

\ccsdesc[500]{Computing methodologies~Computational control theory}

\keywords{LTI systems, control theory, reachability, decidability, linear differential equation, theory of the reals, exponential}

\begin{abstract}
    We consider the decidability of state-to-state reachability in linear
    time-invariant control systems over continuous time.  We analyze this
    problem with respect to the allowable control sets, which are assumed to be the image under
    a linear map of the unit hypercube (\emph{i.e.} zonotopes).
    This naturally models bounded (sometimes called saturated) controls.
    Decidability of the version of the reachability problem in which
    control sets are affine subspaces of $\RR^n$ is a fundamental result in
    control theory. Our first result is decidability in two dimensions ($n=2$) if
    matrix $A$ satisfies some spectral conditions and conditional decidablility in general.
    If the transformation matrix $A$ is diagonal with rational entries (or rational multiples
    of the same algebraic number) then the reachability problem is decidable.
    If the transformation matrix $A$ only has real eigenvalues, the reachability problem is
    conditionally decidable.
    The time-bounded reachability problem is conditionally decidable and unconditionally
    decidable in two dimensions.
    Some of our results rely on the
    decidability of certain logical theories ---
    namely the theory of the reals with exponential ($\RRexp$) and with bounded sine ($\RRexpsin$)---
    which have been proven decidable
    conditional on Schanuel's Conjecture --- a unifying conjecture in transcendence theory.
    We also obtain a hardness result for a mild generalization of the problem where the target is
    a simple set (hypercube of dimension $n-1$ or hyperplane) instead of a point.
    In this case, we show that the problem is at least as hard as the \emph{Continuous Positivity problem}
    if the control set is a singleton, or the \emph{Nontangential Continuous Positivity problem}
    if the control set is $[-1,1]$.
\end{abstract}

\maketitle

\section{Introduction}

This paper is concerned with \emph{linear time-invariant (LTI)
  systems}.  LTI systems are one of the most basic and fundamental
models in control theory and have applications in circuit design,
signal processing, and image processing, among many other areas.  LTI
systems have both discrete-time and continuous-time variants; here we
are concerned solely with the continuous-time version.

A (continuous-time) LTI system in dimension $n$ is specified by a
transition matrix $A \in \mathbb{Q}^{n\times n}$, a control matrix
$B \in \mathbb{Q}^{m\times n}$ and a set of controls
$U\subseteq \mathbb{R}^m$.  The evolution of the system is described
by the differential equation $x'(t)=Ax(t)+Bu(t)$ where $u:\RR\to U$
is a measurable function. Here we think of $u$ as an input (or control)
applied to the system. Note that the number of inputs is independent of the dimension:
it is possible to have only one input ($m=1$) in dimension $n$, or many inputs in
small dimension ($m>n$).

Given such an LTI system, we say that state $x_0\in\RR^n$ can \emph{reach}
state $y\in\RR^n$ if there exists $T \geqslant 0$ and a control\footnote{From
now on, all controls are necessarily measurable functions, we omit it
most of the time.} $u:[0,T]\to U$ such that the unique solution to the differential
system $x'(t)=Ax(t)+Bu(t)$ for $t\in(0,T)$ with initial condition $x(0)=x_0$
satisfies $x(T)=y$. Similarly, given $ t \geqslant 0$, We say that state $x_0$ can reach
state $y$ \emph{in time at most $ t$} if it can reach $y$ at a time $T \leqslant t$.
The problem of computing the set of all states reachable
from a given state has been an active topic of research for several decades.
Almost exclusively, the emphasis is typically
on efficient and scalable methods to over- and under-approximate the
reachable set~\cite{CattaruzzaASK15,GirardGM06,GirardG08,Kaynama2010OverapproximatingTR,SummersWS92}.
Furthermore, this problem has numerous practical applications and is a fundamental basic
block for the analysis of more complicated models, such as hybrid systems~\cite{dang00,Alur11,TMBO03}.
By contrast, there are relatively few results concerning the
\emph{decidability} of the reachable set---the focus of the present
paper.

We consider the \emph{LTI Reachability problem}:
given an LTI system, and target state $y$,
decide whether $\boldsymbol{0}$ can reach $y$. Specifically, we primarily focus on the case where the inputs
are \emph{saturated}, that is $U=[-1,1]^m$. Equivalently, one can think of $BU$ as being a zonotope~\cite{McMullen71}.
This naturally leads us to study the impact of
the control matrix ($B$) on the LTI Reachability problem. We will also consider the \emph{Bounded
Time LTI Reachability problem} where we are given an upper bound on the time allowed to reach
the target: given an LTI system, and target state $y$ and a time bound $T$,
decide whether $\boldsymbol{0}$ can reach $y$ in time at most $T$. Finally, we consider the
\emph{LTI Set Reachability problem} where the target $y$ becomes a set and we ask whether there exists
a reachable point within this set.

Close variants of the LTI Reachability problem include the Controllability problem
(set of points that can reach $\boldsymbol{0}$). It is also possible to consider the set of
points reachable from a given source $x_0$. Both problems are equivalent to the Reachability
problem either in backward time and/or with a modification of the control matrix and set.

The decidability of point-to-point reachability for linear systems is open although
for many different extensions and generalizations of the basic LTI model point-to-point reachability
has been shown undecidable (see discussion of related work).  While there are a number of classical
results on decidability of reachability for LTI systems in the literature,
these talk about ``universal'' reachability properties with almost the same names: \emph{null reachability}
(can one reach all states from the origin?)
and \emph{null controllability} (can one reach the origin from all
states?)~\cite{BlondelT99a}.  However these ``universal'' reachability
problems are very different from the point-to-point version
that we study. In particular, both null reachability and null
controllability are decidable in polynomial time using linear algebra.

One of the first results about (continuous-time) LTI Reachability problems is that
of Kalman~\cite{Kalman63} where the control sets are linear subspaces of $\RR^n$.
An important particular case is that of the \emph{Orbit problem}: given a matrix $A$,
an initial state $x_0$ and a state $y$, decide whether $y$ is in the \emph{orbit of $x$ under $A$},
\emph{i.e.} whether $y=e^{At}x_0$ for some\footnote{Although this problem is known as the ``Orbit'' problem, it really is a semi-orbit problem.
If one considers $t\in\RR$, the problem reduces to two semi-orbit problems.}
$t \geqslant 0$. This corresponds to the case
when the control is a singleton (or equivalently with non-zero $x(0)$ and zero control set).
This problem was shown to be decidable in polynomial time~\cite{Hainry08,CYH15}.
These results yield (polynomial-time) decidability when
the control sets are affine subspaces of $\RR^n$. An exact description of the
null controllable regions for general linear systems with saturating actuators was obtained \cite{HuLQ02},
however, this formula does not immediately yield an algorithm (see Section~\ref{sec:challenges}).

In this paper, we study the decidability of several special instances of the LTI Reachability problem with saturated inputs.
Specifically, we show the following results, all conditional on Schanuel's Conjecture:
\begin{itemize}
    \item In two dimensions ($n=2$), the reachability problem is decidable.
    \item If $A$ has real spectrum then the reachability problem is decidable.
    \item The time-bounded reachability problem is decidable.
\end{itemize}
These results are conditional in that they rely on the decidability of certain
mathematical theories, namely the theory of the reals with exponential ($\RRexp$) and with bounded sine ($\RRexpsin$).
Both theories are known to be decidable assuming Schanuel’s conjecture \cite{Macintyre1996}, a
major conjecture in transcendental number theory that is widely believed to be true.
We also manage to find some class of LTI with \emph{unconditionally} decidable reachability problem:
\begin{itemize}
    \item In two dimensions, when the $A$ has real spectrum
        and there is only one input ($m=1$).
    \item When $A$ is diagonalizable with rational eigenvalues (or rational multiples
        of the same algebraic number).
    \item When $A$ is real diagonal, there is only one input and it has at most two nonzero entries.
    \item When $A$ only has one eigenvalue, which is real, and there is only one input.
\end{itemize}
While those subclasses look ad-hoc, they all correspond to specific forms of the boundary of the reachable
set and the study of the transcendental points on this boundary. In particular, some of those cases
require some nontrivial theorems in transcendental number theory (Gelfond–Schneider, Lindemann-Weierstrass).
See \Cref{sec:challenges} for more details.

We also obtain a hardness result for a mild generalization of the problem where the target is a
simple set (either a hyperplane or compact convex set of dimension n-1) instead of a point, and the control set
is either $U=\set{u}$ or $U=[-1,1]$. In this case, we show that the problem is at least as hard as the
\emph{Continuous Skolem problem}
or the \emph{Nontangential Continuous Skolem problem} which asks whether the first component $x_1(t)$
of the solution to a linear differential equation
$x'(t)=Ax(t)$ has a zero (resp. nontangential zero). Showing decidability of any of these problems
would entail a major new effectiveness result in Diophantine approximation,
which suggests that the problem is very challenging.

\textbf{Related Work.}  It is well-known that besides linear
systems, most control problems are undecidable~\cite{BlondelT99,BlondelT00}.
For example, point-to-point reachability is hard or undecidable for \emph{piecewise linear
systems}~\cite{AsarinMP95,BlondelT99a,KoiranCG94,BournezKP18,KurganskyyPC08,Bell16,KPC07,Asarin2021}, for
\emph{saturated linear systems}~\cite{SiegelmannS95} and point-to-set reachability
is undecidable for polynomial systems, a consequence of \cite{GBC08}.
However, to the best of our knowledge, there are no (un)decidability results
within the class of LTI systems, except when the control sets are affine subspaces.
An exact description of the null controllable regions for general linear systems
with saturating actuators was obtained \cite{HuLQ02}, but it does not immediately
translate into a decidability result. On the other hand, the reachability problem
is well-known to be challenging in practice, even for LTI systems. There is a
vast literature on efficient and scalable methods to over- and under-approximate the
reachable set~\cite{CattaruzzaASK15,GirardGM06,GirardG08,Kaynama2010OverapproximatingTR,SummersWS92}.
However those methods, by construction, cannot lead to any decidability results
on their own. In fact, one can observe that a corollary of these methods is
that the ``only'' difficult part of the problem, in terms of decidability,
is the boundary of the reachable set.

A range of different control problems for discrete- and
continuous-time LTI systems under constraints on the set of controls
have been studied in the
literature~\cite{FijalkowOPP019,Cook80,Sontag84,SummersWS92,HuQiu98,HuMQ02,HuLQ02,TilS86,Grantham1975,
SchmitendorfB80,HeemelsC08,GirardG08,Jamak00,Zhao17}.
Kalman showed that when the control set is $U = \RR^m$ , the system is globally null-controllable
(every point can be controlled to 0) if and only if $(A,B)$ is controllable (see \Cref{def:controllable})
\cite{Kalman63}. Lee and Markus considered $U$ such that
$\mathbf{0} \in U \subset \RR^m$ and showed that if $(A,B)$ is controllable and all eigenvalues
have negative real parts, then the system is globally null-controllable \cite{LM}.
Sontag considered the problem of asymptotic null-controllability which asks if there is a control
that reaches the origin in the limit \cite{Sontag84}.
Summers discussed over estimation of the reachable set (from origin) by n-dimensional
ellipsoids when $U = [-1,1]^m$ \cite{SummersWS92}. Schmitendorf considered time varying matrices
$A(t)$ and $B(t)$ and gives a characterisation for a given point to be controllable when $U$
is compact \cite{SchmitendorfB80}, however this does not immediately yield an algorithm
(see Section~\ref{sec:challenges}). Lafferriere considered a different reachability problem
where the inputs are expressible in the first-order theory of the reals with some unknown
coefficients \cite{Lafferriere99,Lafferriere01}, and it was generalized by \cite{GCDXZ15,GanCLXZ16}
but our problem is of a very different nature because we do not require the input to have a closed-form.

\section{Examples}

The idea of using an external input to manipulate the state of some system to achieve a
certain goal is fundamental and everywhere in our lives. In order to give a better idea
of the problem we are trying to solve, we will informally explain the theory and its goals via some examples.

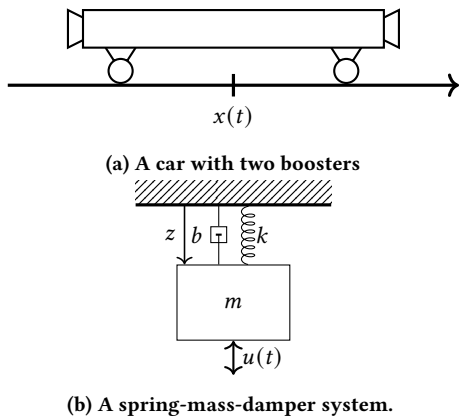
\begin{figure}[h]
    \begin{subfigure}[B]{0.4\textwidth}
        \centering
        \begin{tikzpicture}[scale=1]
            \draw[black,very thick,->] (2,0) -- (8,0); 
            \draw[black,very thick] (5,0.15) -- +(0,-0.3) node[below] (pos) {$x(t)$};
            \draw[black,thick] let \p1=(pos.center) in
                (\x1-2cm,0.5) node (bl) {} rectangle (\x1+2cm,1) node (tr) {};
            \draw[black,thick] ($(bl.center)+(0,0.1)$) -- ++(-0.2,-0.1) -- ++(0,0.5) -- ++(0.2,-0.1);
            \draw[black,thick] ($(tr.center)-(0,0.1)$) -- ++(0.2,+0.1) -- ++(0,-0.5) -- ++(-0.2,0.1);
            \draw[black,thick] let \p1=(bl.center) in
                (\x1+0.5cm,0.19) node (wheel left) {} circle[radius=0.16]
                (\x1+0.3cm,0.5) node (wlanchorl) {} -- ($(wlanchorl)!0.55!(wheel left.center)$)
                (\x1+0.7cm,0.5) node (wlanchorr) {} -- ($(wlanchorr)!0.55!(wheel left.center)$);
            \draw[black,thick] let \p1=(tr.center) in
                (\x1-0.5cm,0.19) node (wheel right) {} circle[radius=0.16]
                (\x1-0.3cm,0.5) node (wranchorl) {} -- ($(wranchorl)!0.55!(wheel right.center)$)
                (\x1-0.7cm,0.5) node (wranchorr) {} -- ($(wranchorr)!0.55!(wheel right.center)$);
        \end{tikzpicture}
        \caption{A car with two boosters\label{fig:car}}
    \end{subfigure}
    \begin{subfigure}[B]{0.4\textwidth}
        \centering
        \begin{tikzpicture}[scale=1.3]
            \fill[pattern=north east lines] (-1,0) rectangle (1, .25);
            \draw[black,very thick] (-1,0) -- (1,0);
            \node (mass) at (0,-1) [draw,minimum width=1.5cm,minimum height=1cm] {$m$};
            \draw[decorate,decoration={coil,segment length=3}]
                let \p1=(mass.north) in (\x1+0.15cm,\y1) -- ++(0,-\y1)
                node[midway,right] {$k$};
            \draw let \p1=(mass.north) in (\x1-0.15cm,\y1) -- ++(0,-\y1/2)
                node (damper) [draw,minimum width=0.1cm,minimum height=0.1cm] {}
                node [left,xshift=-0.1cm] {$b$};
            \draw[thick] let \p1=(mass.north) in (\x1-0.18cm,\y1/2) -- ++(0.06cm,0);
            \draw let \p1=(damper.north) in (\x1,\y1) -- ++(0,-\y1);
            \draw[thick,<->] (mass.south) -- ++(0,-.35) node[midway,right] {$u(t)$};
            \draw[semithick,->] let \p1=(mass.north) in (-0.5,0) -- ++(0,\y1)
                node[left,midway] {$z$};
        \end{tikzpicture}
        \caption{A spring-mass-damper system.
            \label{fig:mass_spring_damper}}
    \end{subfigure}
    \caption{Two examples of dynamical systems with controls.}
\end{figure}

Consider the toy example in \Cref{fig:car} (taken from Chapter 1 of \cite{MackiStrauss82}).
A car has two boosters, one at the front and one at the back. At time $t=0$,
it starts at some position $x_0\in\RR$ on the real line, with velocity $v_0$.
The objective is to reach the origin and stay there indefinitely, that is to reach the origin with a speed of $0$.
The external input in the above problem is the effects of the boosters that affect the acceleration directly,
thereby affecting the velocity and the position. We model the state of the system by its
position and velocity $S(t)=(x(t),v(t))\in\RR^2$. Assuming the front and rear boosters are similar and give a max acceleration of $M$ units,
we can model the dynamics  by
\[
    x'(t) = v(t),\qquad v'(t) = u(t)
\]
where $u(t)\in[-M,M]$: the acceleration is positive when the rear booster is on, and negative when
the front booster is on. We call $U=[-M,M]$ the control set. Combining both equations and writing it in matrix form gives us
\[
    S(0)=\begin{bmatrix}x_0\\v_0\end{bmatrix},\qquad 
    S'(t) = \begin{bmatrix} 0 & 1 \\ 0 & 0 \end{bmatrix} S(t) + \begin{bmatrix}0 & 0\\ 0 & 1 \end{bmatrix}u(t) = AS(t)+Bu(t).
\]
Here, the problem is to find or ``synthesize'' a control $u$ such that we reach $(0,0)$ from the initial point.
Note that in real life, we cannot change the control (booster output) arbitrarily fast, \ie{} not all functions $u$
are to be considered. In this work, we neglect this aspect and allow any function $u:\RR\to U$ that is \emph{measurable},
which is essentially the minimum mathematical condition for the problem to make sense. Observe that
already in this toy example, it is natural to consider a bound on the acceleration:
the control set is therefore bounded.

In the example of the car, we viewed the input as something under our control that we used to achieve some
objective. A dual view is to consider certain safety problems and check whether the input, now controlled by an adversary,
can be used to steer the system to a bad state. Consider the system in \cref{fig:mass_spring_damper},
a spring-mass-damper system with an external force acting on it. This typically models a vehicle's suspension.
For example, consider a bike travelling on a road and encountering a speed breaker or hump.
We are interested in the vertical movement of the tires, after they cross the hump. Here, the tire acts as the mass,
the damper and spring form the bike suspension, which provides shock absorption and the recoil force upon hitting the ground,
the road is modelled by the external force $u$.
A bad state is one when the tire's vertical movement is higher than certain admissible value which we want to avoid (for it
could damage or even break the suspension). Here the problem is to decide whether it is possible via some external input to
reach a bad state. Similar to the previous example, we could model the above system by an LTI system with a bounded
control set.

\section{Challenges}\label{sec:challenges}

A major challenge in solving the continuous-time reachability problem is the fact that there is
no simple formula, or more exactly, no formula that is immediately computable.
Given a LTI system $x'(t)=Ax(t)+Bu(t)$, there is a general expression for
$x(t)$ given $u$ that involves an integral and exponential of matrix (see Section~\ref{sec:math}):
\begin{equation}\label{eq:sol_lti}
    x(t)=e^{At}x_0+\int_0^te^{A(t-s)}Bu(s)\dd s.
\end{equation}
Therefore, the reachability problem is equivalent to checking whether there exists a $u$
such that \eqref{eq:sol_lti} is equal to the target $y$. Unfortunately, it seems impossible to
obtain a more useful closed-form formula without knowing more about the shape of $u$.
This is why we now assume that $x(0)=\mathbf{0}$ and $u:\RR\to U$ for some convex set $U$. The former
is without loss of generality\footnote{One can always shift the control set $U$ to ensure that $x(0)=\mathbf{0}$.}
and the latter is very common in the literature.

The simplest case is when $U=\RR^m$: the reachable set can be shown to be a linear subspace,
the image of the so-called controllability matrix $\begin{bmatrix}B&AB&\cdots A^{n-1}B\end{bmatrix}$
and therefore the reachability problem reduces to an orbit problem (if the image of the controllability
matrix is not the whole space, what happens on the remaining space is exactly an orbit problem).

A more interesting case, and the subject of this paper, is when $U$ is a compact convex polytope
and in particular a hypercube: $U=[-1,1]^m$. This is known as the \emph{saturated input}
case. It is not hard to see that when $x(0)=\mathbf{0}$ and $U$ is convex, the reachable set $\Reach$
is strictly convex. Unfortunately, the set $\Reach$ can still be very complicated and checking
whether a single point lies inside it turns out be a challenging problem. We are aware of two
distinct but similar results in this direction. Schmitendorf et al. \cite{SchmitendorfB80}
have some general conditions under which a control can steer a point to the origin. In the
particular case at hand, they turn out to be equivalent to another formulation by \cite{HuLQ02} where
the boundary of the reachable region is described by the sets of the form
\[
    \int_0^{\infty} e^{A t}b\sgn(c^Te^{A t}b)\dd t
\]
where $c \in \RR^n\setminus\set{\mathbf{0}}$ is a parameter and $b$ is a column of $B$. The main challenge is that evaluating this
integral is potentially a hard problem. In particular, if $A$ has a complex eigenvalue whose
argument is not rational multiple of $\pi$, then the sign of $c^Te^{A t}b$ will follow a completely
irregular pattern. In fact, the a priori simpler problem of deciding whether $c^Te^{A t}b$
will change sign \emph{at all} is exactly the continuous Skolem problem. This problem is open and has
been shown to be related to difficult number theoretical questions (see \Cref{sec:skolem}). Note, however,
that computing this integral, or rather deciding if this integral is less than some prescribed number,
does not necessarily require solving the continuous Skolem problem. In fact, a solution to Skolem
would not help per se (there could be infinitely many changes, whose values are not even algebraic),
and conversely, computing this integral does not necessarily help deciding the existence of a sign
change.

\bigskip
The approach we follow in this paper is to study the membership in the boundary.
Indeed, it is not hard to see (see \Cref{prop:complement_border_reach_lti_approx}) that if we can
decide membership into the boundary, then we can decide reachability.
Consider the example illustrated on \Cref{fig:ex_reach2d}: already in dimension $2$,
the boundary of the reachable set from $x(0)=0$ when $U=[-1,1]$, $B=\begin{bmatrix}b_1&b_2\end{bmatrix}$ and $A$ is stable is not trivial;
it consists of two smooth curves joining at singular points. In this case,
the boundaries are exactly (see \apxref{\Cref{sec:details_fig_reach}}{full version} for the details) the sets
\begin{align*}
    \partial\Reach(A,b_1)&=\set*{\pm\begin{bmatrix}2-4t^3\\3-6t^2\end{bmatrix}:t\in[0,1]},\\
    \partial\Reach(A,B)&=\set*{\pm\begin{bmatrix}4-4|t|^3\\6t^2\sgn(t)\end{bmatrix}:t\in[-1,1]}.
\end{align*}
Hence deciding membership of a point $(x,y)$ is exactly deciding
\begin{equation}\label{eq:equiv_formula_second_example}
    \exists t\in[-1,1].\, 4-4|t|^3=\pm x\wedge 6t^2\sgn(t)=\pm y
\end{equation}
which can be shown, after a bit of rewriting, to be in $\RRtarski$, the first-order theory of the reals.
Therefore, membership is decidable in this case by Tarski's Theorem. This example turns out to be easy
because $A$ is diagonal and all eigenvalues of $A$ are rational and therefore, the boundary can be described by polynomial
equations (see \Cref{prop:decide_real_mat_uncond}).

A small modification of this example already turns out to be much more challenging, if we consider the case where
\[
    A=\begin{bmatrix}-1&0\\0&-\sqrt{2}\end{bmatrix},
    \qquad
    B=\begin{bmatrix}1&1\\-1&1\end{bmatrix}.
\]
Then a similarly computation (see \apxref{\Cref{sec:details_ex2_reach}}{the full version}) shows that
\[
    \partial\Reach(A,B)=\set*{
        \pm\begin{bmatrix}-2|t|\\
            \sqrt{2}\left(1-|t|^{\sqrt{2}}\right)\sgn(t)
            \end{bmatrix}:t\in[-1,1]}.
\]
Hence deciding membership of a point $(x,y)$ is exactly deciding
\[
    \exists t\in[-1,1].\, -2|t|=\pm x\wedge \sqrt{2}\left(1-|t|^{\sqrt{2}}\right)\sgn(t)=\pm y
\]
which can be shown, after a bit of rewriting, to be in $\RRexp$, the first-order theory of the reals
with exponential. However, this formula is not in $\RRtarski$ because we need to raise $t$ to some irrational
power ($\sqrt{2}$) which leaves its decidability open a priori.
This example is a particular case of \Cref{prop:decide_real_eigen_rexp}:
when the eigenvalues of the matrix are real, the formulas
will only involve real exponentials and the boundary can be expressed in the theory of reals with
exponential ($\RRexp$). This theory is known to be decidable subject to Schanuel’s conjecture (see
\Cref{sec:transcendence}). Unfortunately, the formulas may further involve sine and cosine when $A$
has complex eigenvalues, and the theory of reals with sine and cosine is undecidable for example
by Richardson's theorem~\cite{Richardson68} and its improvements~\cite{Laczkovich03}
. This suggests
that the reachability problem is hard, but surprisingly, we have only been able to show some hardness
results in the case of set reachability.

It turns out that the second example is decidable by a similar argument to the proof of
\Cref{prop:decide_real_mat_uncond} using
\Cref{th:gelfond_schneider} (Gelfond-Schneider). Indeed, we can remove the absolute value
and $\sgn$ in \eqref{eq:equiv_formula_second_example} by doing a case distinction on the sign of $t$.
Then the first equation in \eqref{eq:equiv_formula_second_example} implies that $2t=x$ and hence the second that
$\sqrt{2}(1-(x/2)^{\sqrt{2}})=y$. But since $x$ is assumed to be algebraic, $(x/2)^{\sqrt{2}}$
must be transcendental hence $y$ cannot be algebraic, a contradiction. It follows that the boundary
contains no algebraic points which shows decidability of the problem. This reasoning, however, does
not seem to apply in the general case of a system of dimension $n=2$ with $m=2$ controls.
\medskip

The subclasses of linear systems that we identified for our decidability results,
while ad-hoc at first sight, really correspond to subclasses of exponential polynomial equations
that we can solve systematically.

\begin{figure*}
    \begin{center}
    \begin{tikzpicture}[
            fillcolor/.style={
                pattern color=darkgreen, pattern=crosshatch dots
            },
            edgecolor/.style={blue,line width=\fracszedge},
            ptcolor/.style={opacity=1},
            scale=0.75,
            ]
        \newcommand{\fracscale}{1}
        \newcommand{\fracszpt}{0.7pt}
        \newcommand{\fracszedge}{0.7pt}
        \newcommand{\fracsznorm}{0.7pt}
        \newcommand{\fracnormlen}{1}
        \newcommand{\fraccoord}{0pt}
        \begin{scope}[shift={(0,0)}]
            \draw[smooth,samples=100,domain=1:20,variable=\t,edgecolor,fillcolor]
                plot ({2-4/(\t^3)},{3-6/(\t^2)}) -- plot ({-2+4/(\t^3)},{-3+6/(\t^2)}) -- cycle;
        \end{scope}
        
        \begin{scope}[shift={(6,0)}]
            \draw[smooth,samples=100,domain=1:20,variable=\t,edgecolor,fillcolor]
                plot ({2-4/(\t^3)},{-3+6/(\t^2)}) -- plot ({-2+4/(\t^3)},{3-6/(\t^2)}) -- cycle;
        \end{scope}
        \begin{scope}[shift={(13,0)},scale=0.6]
            \draw[smooth,samples=100,domain=1:20,variable=\t,edgecolor,fillcolor]
                plot ({4-4/(\t^3)},{6/(\t^2)}) -- plot ({4-4/((21-\t)^3)},{-6/((21-\t)^2)}) --
                plot ({-4+4/(\t^3)},{-6/(\t^2)}) -- plot ({-4+4/((21-\t)^3)},{6/((21-\t)^2)}) --
                cycle;
        \end{scope}
        \draw (3,0) node {\scalebox{2}{$\boldsymbol{+}$}};
        \draw (9,0) node {\scalebox{2}{$\boldsymbol{=}$}};
        \begin{scope}[shift={(0,-4.5)}]
            \draw (3,0) node {$A=\begin{bmatrix}-\frac{1}{2}&0\\0&-\tfrac{1}{3}\end{bmatrix}$};
            \draw (-1,0) node {$b_1=\begin{bmatrix}1\\1\end{bmatrix}$};
            \draw (7,0) node {$b_2=\begin{bmatrix}-1\\1\end{bmatrix}$};
            \draw (12.5,0) node {$B=\begin{bmatrix}1&-1\\1&1\end{bmatrix}$};
        \end{scope}
    \end{tikzpicture}
    \end{center}
    \caption{Example of a simple LTI system: the reachable set is depicted in three cases.
        The two pictures on the left correspond to the case of one control ($B=b_1$ and $b_2$ respectively).
        The picture on the right corresponds to the case where $B=[b_1,b_2]$:
        the reachable set is then the Minkowski sum of the two reachable sets.
        \label{fig:ex_reach2d}}
\end{figure*}
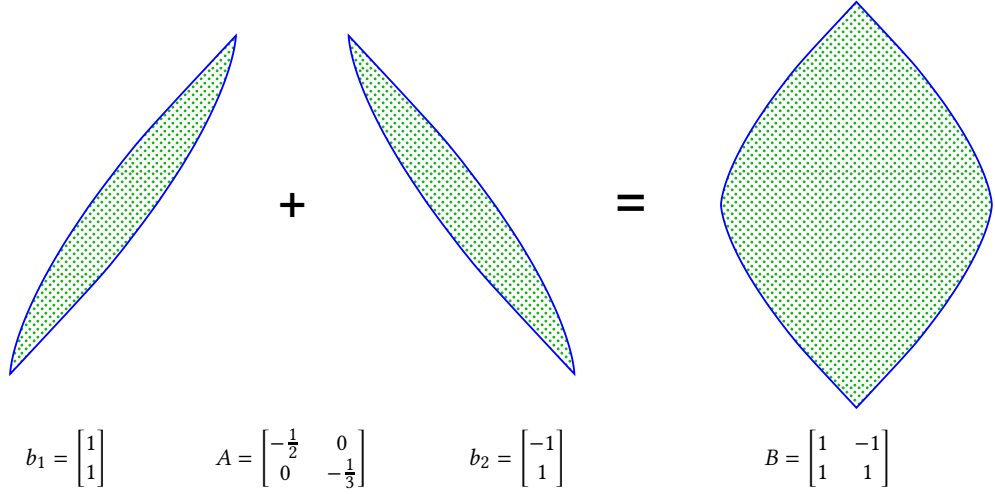

\section{Preliminaries}\label{sec:math}

We denote the usual Euclidean norm of vectors $x\in\CC^n$ by $\twonorm{x}$ and $\twonorm{A}$
the induced norm on matrices $A\in\CC^{n\times n}$. Any induced norm is consistent
($\twonorm{Ax} \leqslant \twonorm{A} \twonorm{x}$) and therefore submultiplicative
($\twonorm{AB} \leqslant \twonorm{A} \twonorm{B}$). Given a matrix $A\in\CC^{n\times n}$,
$e^{A}$ denotes the matrix exponential of $A$.
In particular, we have that $\twonorm{e^{A}} \leqslant e^{\twonorm{A}}$. We denote the boundary
of set $S$ by $\partial S$ and its closure by $\overline{S}$.
We denote the transpose of a vector or matrix $A$ by $A^T$ and its spectrum by $\Spectrum(A)$.
Additional preliminaries are in \apxref{\Cref{apx:prelim}}{available in the full version}.

\subsection{Control Theory}\label{sec:control_theory}

The following definition is standard in the literature of control theory.

\begin{definition}[Controllable]\label{def:controllable}
    A pair of matrices $(A,B)$ is called controllable if the rank of $[B,AB,\ldots,A^{n-1}B]$
    is $n$, where $A$ is an $n \times n$ matrix.
\end{definition}


Consider two vectors $c$ and $b$ in $\RR^n$ and define a function $f_{c,A,b}(t) = c^Te^{At}b$.
Then it follows from the Jordan decomposition that $f_{c,A,b}(t) = \sum_{j=1}^{m} P_j(t)e^{\theta_j t}$
where each $\theta_j$ is an eigenvalue of $A$
and $P_j$ a polynomial. The following properties of $f$ are well-known
(see e.g. \cite{hajek2009control}).

\begin{lemma}\label{lem:control_fc}
    Let $f_{c,A,b}(t)$ be the function defined above. Then
    \begin{itemize}
    \item if $f_{c,A,b} \neq 0$, then the number of zeros of $f_{c,A,b}$ in any bounded interval is finite,
    \item $(A,b)$ is controllable $\iff$ for all $c$, $f_{c,A,b} \neq 0$,
    \item if the eigenvalues of $A$ are real and $b,c$ are nonzero then $f_{c,A,b}$ has at most $n-1$
        zeros.
    \end{itemize}
\end{lemma}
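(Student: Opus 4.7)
The plan is to address each of the three items separately, exploiting the fact that $f_{c,A,b}$ is an entire function of $t$ with the closed form $f_{c,A,b}(t)=\sum_{j=1}^m P_j(t)e^{\theta_j t}$ (with distinct eigenvalues $\theta_j\in\Spectrum(A)$ and polynomials $P_j$ whose degrees are bounded by the algebraic multiplicities minus one) obtained from the Jordan decomposition of $A$.

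For the first item, I use that $f_{c,A,b}$ is real-analytic on all of $\RR$. If it had infinitely many zeros in some bounded interval, Bolzano--Weierstrass would produce an accumulation point $t^\star\in\RR$; by the identity theorem for analytic functions this forces $f_{c,A,b}\equiv 0$, contradicting the hypothesis. So any bounded interval contains only isolated zeros, hence finitely many.

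For the second item, I expand $f_{c,A,b}(t)=\sum_{k\geqslant 0}\frac{t^k}{k!}c^TA^kb$. Thus $f_{c,A,b}\equiv 0$ iff $c^TA^kb=0$ for every $k\geqslant 0$, and by Cayley--Hamilton this is in turn equivalent to $c^T[b,Ab,\ldots,A^{n-1}b]=0$. Consequently, a nonzero $c$ with $f_{c,A,b}\equiv 0$ exists iff the rows of the controllability matrix are linearly dependent, i.e.\ the matrix has rank strictly less than $n$; this is exactly the negation of controllability.

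The third item is the real content of the lemma and is the step I expect to require the most care. Assuming $f_{c,A,b}\not\equiv 0$ (otherwise item~1 already applies), I plan a Rolle-type induction on the number $m$ of active exponentials. The base case $m=1$ reduces to $P_1(t)e^{\theta_1 t}$, which has at most $\deg P_1$ real zeros since $e^{\theta_1 t}$ never vanishes. For the inductive step, I factor out $e^{\theta_m t}$ to rewrite $f_{c,A,b}(t)e^{-\theta_m t}=P_m(t)+\sum_{j<m}P_j(t)e^{(\theta_j-\theta_m)t}$ and then differentiate $\deg P_m+1$ times; crucially, because the $\theta_j-\theta_m$ are real and nonzero, this kills $P_m$ while leaving each other $P_j$ with the same degree, and each differentiation preserves the number of real zeros up to a loss of one by Rolle's theorem. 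The inductive bound then yields at most $\sum_{j=1}^m(\deg P_j+1)-1$ real zeros for $f_{c,A,b}$. The main obstacle is the bookkeeping: I must verify that the reality of the $\theta_j$ is essential (complex $\theta_j$ would introduce oscillatory factors and Rolle would fail to eliminate terms), and that $\sum_j(\deg P_j+1)\leqslant n$ because $\deg P_j$ is bounded above by the size of the largest Jordan block associated to $\theta_j$ minus one, and the block sizes sum to $n$. Combining these yields the desired bound of $n-1$.
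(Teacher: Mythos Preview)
The paper does not actually prove this lemma: it states the three properties as ``well-known'' and cites \cite{hajek2009control}. So there is no paper proof to compare against, only a reference.

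Your proof is correct and is the standard argument for these facts. The first two items are entirely routine; the third is the classical Rolle/P\'olya bound on zeros of real exponential polynomials, and your induction (factor out one exponential, differentiate enough times to kill its polynomial coefficient, lose one zero per differentiation) is exactly the usual proof. Two minor remarks: your parenthetical ``otherwise item~1 already applies'' is not quite right---when $f\equiv 0$ item~1 says nothing and the $n-1$ bound is simply false---so what you really need (and tacitly use) is the extra hypothesis $f_{c,A,b}\not\equiv 0$, which indeed holds in every place the paper invokes the third item (there $(A,b)$ is controllable). Second, your bookkeeping phrase ``the block sizes sum to $n$'' should be read as ``the algebraic multiplicities sum to $n$'': what you need is $\deg P_j+1\leqslant$ (size of the largest Jordan block for $\theta_j$) $\leqslant$ (algebraic multiplicity of $\theta_j$), and then summing over $j$ gives $n$. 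With these cosmetic fixes the argument is complete.
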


Given a matrix $A$, a time bound $\tau\in\RR\cup\set{\infty}$ and a control
set $U$, define the null-controllable set $\Control$ and the reachable set $\Reach$ as
\begin{align*}
    \Control_\tau(A,B,U) &=
        \bigcup_{T=0}^\tau \set*{-\int_0^T\hspace{-2ex}e^{-A t}Bu( t)\dd t\,\Big|\,u:[0,T]\to U\text{ measurable}},
        \\
    \Reach_\tau(A,B,U) &=
        \bigcup_{T=0}^\tau \set*{\int_0^T e^{A t}Bu( t)\dd t\,\Big|\,u:[0,T]\to U\text{ measurable}}.
\end{align*}
When $\tau=\infty$, we simply write $\Control(A,B,U)$ and $\Reach(A,B,U)$.
It follows immediately from those definitions that $\Reach_\tau(A,B,U)=\Control_\tau(-A,B,-U)$ and $U=-U$ for a
hypercube (or any symmetric set). It is customary in the literature to express results about the
null-controllable sets. However, since we are interested in reachability questions, we find it more
convenient to state all results using the reachable set.

Define a matrix $A$ to be \emph{stable} if all its eigenvalues have negative real part, \emph{antistable}
if all its eigenvalues have positive real part, \emph{weakly-stable} (also called \emph{semi-stable} in \cite{HuLQ02})
if all its eigenvalues have nonpositive real parts and \emph{weakly-antistable}
if all its eigenvalues have nonnegative real parts.
Clearly $A$ is stable (resp. weakly-stable) if and only if $-A$ is antistable (resp. weakly-antistable).

In some cases, it is well-known that it is possible to decompose the system into its stable and weakly-antistable parts
(or dually into its antistable and semistable parts). In particular, this is possible when the control set is a hypercube:


\begin{proposition}[\cite{HuLQ02}]\label{prop:null_controllable_region_desc}
    Let $(A,B)$ be controllable, $U=[-1,1]^m$.
    \begin{itemize}
    \item If $A$ is weakly-antistable, then $\Reach = \RR^n$
    \item If $A$ is stable, then $\Reach$ is a bounded convex open set containing the origin
    \item If $A =  \begin{bmatrix}
                    A_1 & 0 \\
                    0 & A_2
                    \end{bmatrix}$
        where $A_1 \in \RR^{n_1\times n_1}$ stable and $A_2\in \RR^{n_2\times n_2}$ weakly-antistable
        and $B$ is partitioned as $\begin{bmatrix}B_1 \\ B_2\end{bmatrix}$ accordingly,
        then $\Reach = \Reach(A_1,B_1,U) \times \RR^{n_2}$.
    \end{itemize}
\end{proposition}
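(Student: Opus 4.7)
The plan is to establish a few shared structural properties of $\Reach$ and then dispatch the three items in turn. Throughout, I would use the following ingredients: (a) by the Kalman rank condition for $(A,B)$, for every $T>0$ the ``reach at exactly time $T$'' set is a convex open neighborhood of $\mathbf{0}$, and prepending a zero-control prefix of length $T-t$ to any control defined on $[0,t]$ shows that $\Reach_T$ coincides with this set; (b) hence $\Reach = \bigcup_T \Reach_T$ is a nested increasing union of convex open sets, itself convex, open, and containing $\mathbf{0}$; (c) appending a zero-control suffix of length $t$ to a witness of $y \in \Reach_T$ places $e^{At}y$ in $\Reach_{T+t}$, so $\Reach$ is forward-invariant under $e^{At}$ for all $t \geq 0$; and (d) $\Reach = -\Reach$ since $U = -U$.

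For items 1 and 2 I would compute the support function via bang--bang optimisation. Componentwise maximisation over $u(s) \in [-1,1]^m$ together with the substitution $r = T-s$ give
\[
    h_{\Reach_T}(c) = \sup_{y \in \Reach_T} c^T y = \int_0^T \lVert B^T e^{A^T r} c\rVert_1\,dr,
\]
and since $\Reach_T$ is nested in $T$, $h_{\Reach}(c) = \int_0^\infty \lVert B^T e^{A^T r} c\rVert_1\,dr$ by monotone convergence. In item 2, stability of $A$ makes the integrand exponentially decaying, so $h_{\Reach}(c) < \infty$ for every $c$; combined with the shared properties this gives a bounded, convex, open neighborhood of $\mathbf{0}$. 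In item 1, \Cref{lem:control_fc} and controllability of $(A,B)$ ensure that $r \mapsto B^T e^{A^T r} c$ is a nonzero vector of exponential polynomials, and weak antistability forces every exponent to have nonnegative real part; a standard averaging argument (exponential growth in the strictly antistable case, quasi-periodicity or polynomial growth in the purely imaginary case) shows the integral diverges for every $c \neq 0$, so $h_{\Reach}(c) = +\infty$ in every direction. A symmetric convex set with this property must be $\RR^n$.

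For item 3 the inclusion $\Reach \subseteq \Reach(A_1,B_1,U) \times \RR^{n_2}$ is immediate from the block-diagonality of $e^{At}$. For the reverse inclusion I would use a two-phase control. Given $(y_1,y_2) \in \Reach(A_1,B_1,U) \times \RR^{n_2}$, fix a witness control $u^{(2)}\colon[0,T_1]\to U$ driving the first block from $\mathbf{0}$ to $y_1$, and prepend a preparation phase $u^{(1)}\colon[0,T_0]\to U$ on which the second block lands on
\[
    z = e^{-A_2 T_1}\left(y_2 - \int_0^{T_1} e^{A_2(T_1-s)} B_2 u^{(2)}(s)\,ds\right).
\]
Because $A_2$ is weakly-antistable, $\lVert e^{-A_2 T_1}\rVert$ is polynomially bounded in $T_1$, so $\lVert z\rVert$ is polynomially bounded in $T_1$. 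The Popov--Belevitch--Hautus criterion, together with the disjoint spectra of $A_1$ (stable) and $A_2$ (weakly-antistable), transfers controllability from $(A,B)$ to $(A_2,B_2)$; applying item 1 to $(A_2,B_2)$ yields $\Reach(A_2,B_2,U) = \RR^{n_2}$, and moreover the largest ball contained in $\Reach_{T_0}(A_2,B_2,U)$ (whose in-radius equals $\min_{\lVert c\rVert=1} h_{\Reach_{T_0}(A_2,B_2,U)}(c)$) has radius diverging to $+\infty$ as $T_0 \to \infty$ by Dini's theorem applied to the pointwise increasing convergence $h_{\Reach_{T_0}} \nearrow +\infty$ on the unit sphere. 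Choosing $T_0$ large enough that this ball contains $z$, the concatenated control reaches $(y_1,y_2)$.

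The main obstacle, in my view, is item 3: the two blocks are coupled through the shared control, and the two-phase construction closes only because the polynomial growth of $\lVert e^{-A_2 T_1}\rVert$ in $T_1$ is eventually dominated by the diverging in-radius of $\Reach_{T_0}(A_2,B_2,U)$ as $T_0$ grows. Calibrating these competing rates, and confirming the PBH-based transfer of controllability from the full system to the weakly-antistable sub-block, are the delicate parts of the argument.
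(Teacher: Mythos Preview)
The paper does not prove this proposition; it is quoted from \cite{HuLQ02} without argument, so there is no in-paper proof to compare against. On its own merits, your sketch for items 1 and 2 is essentially sound, modulo one slip: each fixed-time set $\Reach_T$ is compact, not open (already for $n=1$, $A=-1$, $b=1$ one has $\Reach_T=[-(1-e^{-T}),1-e^{-T}]$); openness of $\Reach$ still follows, but from the strict nesting $\Reach_T\subset\operatorname{int}\Reach_{T'}$ for $T'>T$ rather than from openness of the individual slices.

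Your argument for item 3 has a genuine gap. You choose $u^{(2)}$ to steer the first block from $\mathbf{0}$ to $y_1$ in time $T_1$, but after the preparation phase $u^{(1)}$ the first block is not at $\mathbf{0}$: it sits at some $x_1=\int_0^{T_0}e^{A_1(T_0-s)}B_1u^{(1)}(s)\,ds$, and running $u^{(2)}$ from there lands block~1 at $e^{A_1T_1}x_1+y_1$, not at $y_1$. Your ``main obstacle'' paragraph only discusses the second block (balancing $\lVert z\rVert$ against the in-radius of $\Reach_{T_0}(A_2,B_2,U)$) and never addresses this drift. A repair is possible---$x_1$ lies in the bounded set $\Reach(A_1,B_1,U)$, so $e^{A_1T_1}x_1\to 0$ by stability of $A_1$, and for $T_1$ large one can retarget $u^{(2)}$ to hit $y_1-e^{A_1T_1}x_1$ instead---but then $u^{(2)}$ depends on $x_1$, hence on $u^{(1)}$, hence on $z$, hence back on $u^{(2)}$. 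Breaking that circularity (for instance by uniformly bounding $\lVert z\rVert$ over all admissible $u^{(2)}$ first, fixing $T_0$, and then exploiting that $y_1$ lies in the \emph{open} set $\Reach(A_1,B_1,U)$) is exactly the missing step.
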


This fact, suggests that we need to study the reachable region of stable systems.
Decompose $B$ as $[b_1,\ldots,b_m]$ and assume that $U=[-1,1]^m$, then it is not hard to check that
\begin{equation}\label{eq:decompose_C_hypercube}
    \Reach(A,B,[-1,1]^m)=\Reach(A,b_1,[-1,1])+\cdots+\Reach(A,b_m,[-1,1])
\end{equation}
is a Minkowski sum of reachable regions in which $m=1$, \emph{i.e.} $B$ is a column vector.
In this case, one can obtain an explicit description of $\Reach$. Note however that
\eqref{eq:decompose_C_hypercube} does not, by itself, allows for a reduction to this simpler case:
even if we have an algorithm to decide membership in each smaller control set, deciding membership
in the Minkowski sum is nontrivial.


\begin{theorem}[\cite{HuLQ02}]\label{thm:bdry_descrptn}
    Let $A$ be stable, $b\in\RR^{n\times 1}$ such that $(A,b)$ is controllable
    and $U=[-1,1]$. Then $\Reach(A,b,U)$ is an open convex set containing $\boldsymbol{0}$ and its boundary
    is given by
    \[
        \partial\Reach(A,b,U) = \set*{\int_0^{\infty} e^{A t}b\sgn(c^Te^{A t}b)\dd t: c \in \RR^n\setminus \{\mathbf{0}\}}
    \]
    which is a strictly convex set.
\end{theorem}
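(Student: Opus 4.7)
The plan is to handle the three parts (openness/convexity/contains origin, boundary formula, strict convexity) by leveraging standard convex analysis together with the spectral/controllability structure of $(A,b)$.

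First I would dispatch the elementary properties. Convexity of $\Reach(A,b,U)$ is immediate from convexity of $U=[-1,1]$: given two reachable points realized on time horizons $T_1,T_2$, extend the shorter control by $0$ to horizon $T=\max(T_1,T_2)$ and take the pointwise convex combination $\lambda u_1+(1-\lambda)u_2$, which still lies in $[-1,1]$. Containing $\boldsymbol{0}$ is the case $T=0$. For openness, I would fix a reachable point $y$ realized by some $(T,u)$ and argue that the derivative of the endpoint map with respect to perturbations of $u$ has rank $n$; this rank equals the rank of the controllability Gramian, which is full by controllability of $(A,b)$ (equivalently by \Cref{lem:control_fc}). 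Hence the endpoint map is an open mapping onto a neighborhood of $y$.

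For the boundary formula I would pass to the support-function description of the closed convex bounded set $\overline{\Reach}$; boundedness follows from stability of $A$, giving $\|e^{At}b\|\leqslant Me^{-\alpha t}$ and hence the uniform bound $\|z\|\leqslant M/\alpha$ for any $z\in\Reach$. For a direction $c\neq\boldsymbol{0}$, any $z=\int_0^T e^{At}bu(t)\,\dd t$ satisfies
\[
c^T z=\int_0^T f_{c,A,b}(t)u(t)\,\dd t\leqslant\int_0^T|f_{c,A,b}(t)|\,\dd t\leqslant\int_0^\infty|f_{c,A,b}(t)|\,\dd t,
\]
with the upper bound attained in the limit by the bang--bang control $u^\star(t)=\sgn(f_{c,A,b}(t))$ on $[0,T]$ as $T\to\infty$. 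Thus the supremum of $c^T z$ over $\overline{\Reach}$ equals $c^T y_c$ where $y_c:=\int_0^\infty e^{At}b\,\sgn(c^T e^{At}b)\,\dd t$ (the integral converges by stability). Each $y_c$ therefore lies in $\overline{\Reach}$ (as a limit of reachable points) and maximizes a nontrivial linear functional, hence is a support point, hence lies in $\partial\Reach$. Conversely, every boundary point of a closed bounded convex set is exposed by some supporting hyperplane, so $\partial\Reach\subseteq\{y_c:c\neq\boldsymbol{0}\}$ by the uniqueness argument in the next paragraph.

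For strict convexity, and also to conclude that each supporting hyperplane exposes exactly one point, I would use \Cref{lem:control_fc}: since $(A,b)$ is controllable and $c\neq\boldsymbol{0}$, the function $f_{c,A,b}$ is not identically zero and has only finitely many zeros in any bounded interval, so $\sgn(f_{c,A,b}(t))$ is defined a.e. and the maximizer $u^\star$ above is unique up to a set of measure zero. Hence the maximum of $c^T z$ over $\overline{\Reach}$ is attained at the single point $y_c$; equivalently, no supporting hyperplane contains a line segment of $\partial\Reach$, which is exactly strict convexity.

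The main obstacle I anticipate is the technical interchange between ``finite horizon'' reachable points and the infinite integral: one must verify that $y_c\in\overline{\Reach}$ and, more delicately, that $y_c\in\partial\Reach$ rather than in $\Reach$ itself. Openness from the first paragraph handles this indirectly --- if $y_c$ were interior it would admit a neighborhood in $\Reach$, contradicting that $y_c$ uniquely maximizes $c^T z$ --- but formalizing the passage to the limit $T\to\infty$ and combining it with the openness/controllability machinery is where the bulk of the work lies.
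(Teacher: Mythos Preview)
The paper does not prove \Cref{thm:bdry_descrptn}; it is quoted as a result of \cite{HuLQ02}. The only place the paper revisits the argument is inside the proof of \Cref{th:lti_nontangen_hard_skolem}, where claims~(a) and~(b) sketch exactly the support-function computation you propose: write any $x\in\overline{\Reach}$ as $\int_0^\infty e^{At}bu(t)\,\dd t$, bound $c^Tx\leqslant\int_0^\infty|c^Te^{At}b|\,\dd t=c^T\beta_c$, and conclude that $\beta_c$ is the unique maximizer by strict convexity. Your plan is the same idea carried out in full, with the additional ingredients (openness via the Gramian, uniqueness of the bang--bang maximizer via \Cref{lem:control_fc}) that the paper leaves implicit.

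One small remark on your write-up: the sentence ``every boundary point of a closed bounded convex set is exposed by some supporting hyperplane'' is false as stated (exposed points can be a proper subset of extreme points), but your actual argument does not use this. What you use is that every boundary point admits \emph{some} supporting hyperplane, and then the a.e.\ uniqueness of the maximizing control forces that boundary point to equal $y_c$. That reasoning is sound; just rephrase the sentence. You should also make explicit the step that every $x\in\overline{\Reach}$ is representable as an infinite-horizon integral $\int_0^\infty e^{At}bu(t)\,\dd t$ (this is the paper's claim~(a), obtained by weak-$*$ compactness of the unit ball of $L^\infty$), since your uniqueness argument for the converse inclusion tacitly relies on it.
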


\subsection{Continuous Skolem Problem}\label{sec:skolem}

The \emph{Continuous Skolem problem} is a fundamental decision problem concerning the reachability of
linear continuous-time dynamical system~\cite{BDJB10}. Given an initial point and a system of linear differential equation,
the problem asks whether the orbit ever intersects a given hyperplane. More precisely, given
a matrix $A\in\RR^{n\times n}$, two vectors $c,b\in\RR^n$, with rational (or algebraic coefficients),
the question is whether there exists $t \geqslant 0$ such that $c^Te^{At}b=0$. One can also consider
the bounded time version of this problem, where one asks about the existence of a zero at
time $t \leqslant T$ for some prescribed rational number $T$. While similar in spirit to
the Orbit problem (does the orbit reach a given point?), it is of a very different nature. In fact,
decidability of this problem is still open, even when restricting to the case of a bounded time
interval~\cite{ChonevOW15}. The Continuous Skolem problem admits several reformulations,
notably whether a linear differential equation or an exponential polynomial admits a zero~\cite{BDJB10}

The Continuous Skolem problem can be seen as the continuous analog of the \emph{Skolem problem},
which asks whether a linear recurrent sequence has a zero. The Skolem problem is a famously open
problem in number theory and computer science, which is known to be decidable up to dimension $4$
and  not known to be either decidable or undecidable starting from dimension $5$. We refer the reader
to \cite{OuaknineW15} for a recent survey on the Skolem problem.

The Continuous Skolem problem is only known to be decidable in very specific cases: in low
dimension, with a dominant real eigenvalue or a particular spectrum~\cite{BDJB10,ChonevOW15}.
Recent developments suggest that the Continuous Skolem problem is a very challenging problem.
Indeed, decidability of the problem in the case of two (or more) rationally linearly independent
frequencies would imply a new effectiveness result in Diophantine approximation that seem far off
at the moment~\cite{ChonevOW15}. Even decidability in the bounded case is nontrivial because of
\emph{tangential zeros}, and has only been shown recently subject to Schanuel’s Conjecture,
a unifying conjecture in transcendental number theory. While Schanuel’s Conjecture is widely
believed to be true, its far-reaching consequences suggest that any proof is a long way off.
For instance, it easily implies that $\pi+e$ is transcendental, but meanwhile the much weaker
fact that $\pi+e$ is irrational is still unknown! It should be noted however that from a complexity
theoretic perspective, the Continuous Skolem problem is only known to be at least NP-hard \cite{BDJB10}.

We now introduce the \emph{Continuous Nontangential Skolem problem}, a variant of this problem
where only zero-crossings are considered. Given a matrix $A\in\RR^{n\times n}$, two vectors $c,b\in\RR^n$,
with rational (or algebraic coefficients), the question is whether there exists $t \geqslant 0$ such that
$f(t)=0$ and $f'(t)\neq 0$, where $f(t)=c^Te^{At}b=0$. We call such a time $t$ a \emph{nontangential
zero}, as opposed to \emph{tangential zero} that would satisfy $f(t)=f'(t)=0$. Clearly any nontangential
zero is a zero but some systems admit tangential zeros.

We believe that this problem is essentially
as hard as the Continuous Skolem problem. Indeed, one of the reasons why the Continuous Skolem problem
is believed to be hard is a Diophantine hardness proof \cite{ChonevOW15}. In short, this reduction
shows that decidability would entail some major new effectiveness result in Diophantine approximation, 
namely computability of the Diophantine-approximation types of all real algebraic numbers.
But one can observe that the reduction of \cite{ChonevOW15} only
relies on nontangential zeros, hence decidability of the Nontangential Skolem problem would
also entail those Diophantine effectiveness results.

We note that there is a subtlety in the definition of the Nontangential Skolem problem: one needs to decide
the existence of nontangential zeros but it is entirely possible that it also has some
tangential zeros. Hence, even over a bounded interval, it is not clear that the problem is decidable.
For instance, a Newton-based method would not be able to distinguish between a tangential or a
nontangential zero using a finite number of iterations. The problem easily becomes decidable,
\emph{over a bounded interval} under the premise that there are no tangential zeros. We also believe
that a variant of the decidability argument in \cite{ChonevOW15} would show that the problem is decidable over
bounded interval, assuming Schanuel’s conjecture.
As we have seen before, the problem is hard for Diophantine-approximation types over unbounded intervals.

\subsection{First-order theory of the reals}

A \emph{sentence} in the first-order theory of the reals is (although one can allow more general
expression that interleave quantifiers and connectives) an expression of the form
$\phi=Q_1x_1\cdots Q_nx_n.\psi(x_1,\ldots,x_n)$ where each $Q_1,\ldots,Q_n$ is one of the quantifiers
$\exists$ or $\forall$, and $\psi$ is a Boolean combinations (built from connectives
$\wedge$, $\vee$ and $\neg$) of \emph{atomic predicates} of the form $P(x)\sim 0$ where $P$
is a polynomial with integer coefficients and $\sim$ is one of the relations $<,\leqslant,=,>,\geqslant,\neq$.
A theory is said to be decidable if there is an algorithm that, given a sentence, can determine if
it is true or false. A famous result by Tarksi is that first-order theory of reals admits
quantifier elimination and is decidable. In this paper, we denote by $\RRtarski$ this theory,
formally this is the first-order theory of the structure $(\RR,0,1,<,+,\cdot)$.

\begin{theorem}[Tarski's Theorem~\cite{Tarski}]\label{th:fo_reals}
    The first-order theory $\RRtarski$ of reals is decidable.
\end{theorem}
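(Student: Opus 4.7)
The plan is to establish decidability via \emph{quantifier elimination}: every first-order formula $\phi(\bar y)$ over $(\RR, 0, 1, <, +, \cdot)$ is equivalent to a quantifier-free formula $\tilde\phi(\bar y)$ in the same free variables, and the equivalent formula can be computed effectively. Once quantifier elimination is available, decidability of sentences is immediate: a sentence $\phi$ has no free variables, so $\tilde\phi$ is a Boolean combination of atomic predicates $P\sim 0$ where each $P$ is an integer polynomial with no variables, i.e.\ a concrete integer; we evaluate each such sign and then the Boolean combination.

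To prove quantifier elimination, I would reduce to a single elimination step. A universal quantifier $\forall x.\,\psi$ can be rewritten as $\neg\exists x.\,\neg\psi$, so only $\exists$ needs handling. Given $\exists x.\,\psi(x,\bar y)$, I put $\psi$ in disjunctive normal form and push $\exists$ through the disjunction, so it suffices to eliminate $\exists x$ in front of a conjunction $\bigwedge_{i=1}^m P_i(x,\bar y)\sim_i 0$, where $P_1,\dots,P_m$ are polynomials in $x$ whose coefficients are polynomials in $\bar y$. The core combinatorial problem becomes: for each sign condition $\sigma\in\{-,0,+\}^m$ prescribed by the $\sim_i$, produce a quantifier-free formula in $\bar y$ expressing that some real $x$ realises that sign pattern on $P_1,\dots,P_m$.

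The key tool is \emph{Sturm's theorem} together with subresultant (or pseudo-remainder) sequences. For any univariate polynomial $P$ with rational coefficients, the number of distinct real roots of $P$ in an open interval $(a,b)$ equals the difference of sign variations of the Sturm sequence of $P$ at the endpoints; and the signs of all the $P_i$ in between consecutive roots of $\prod_i P_i$ can likewise be read off from generalised Sturm / Tarski queries applied to the set $\{P_1,\dots,P_m\}$. All of this is uniformly expressible using the coefficients of the $P_i$, provided we do a finite case split on the signs of those coefficients (to handle degree drops, vanishing of leading coefficients, and equalities among roots). Since the coefficients are themselves polynomials in $\bar y$, every case corresponds to a sign condition on finitely many polynomials in $\bar y$, and within each case the existence of $x$ with the prescribed sign pattern on $(P_1,\dots,P_m)$ is equivalent to a concrete Boolean combination of sign conditions on a new finite family of polynomials in $\bar y$ (the leading coefficients, the subresultants, etc.). Taking the disjunction of these cases gives the desired quantifier-free $\tilde\phi(\bar y)$, and iterating eliminates all quantifiers.

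The main obstacle is the elimination step itself: one must show that the case analysis on coefficient signs combined with Sturm-style root counting really does produce an equivalent quantifier-free formula over $\bar y$, and that the sizes of the auxiliary polynomial families stay bounded by the input data so the procedure terminates. The routine parts, pushing $\exists$ through disjunctions, dualising $\forall$, and evaluating a variable-free Boolean combination of integer sign predicates, are bookkeeping; the nontrivial content is the effectiveness and correctness of the single-variable elimination, which is exactly where Sturm's theorem (or, alternatively, cylindrical algebraic decomposition) does the heavy lifting.
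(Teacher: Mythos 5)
The paper does not prove Tarski's theorem; it cites it as a classical black box (reference \cite{Tarski}) and builds on it, so there is no ``paper proof'' to compare against. Your sketch is a correct outline of the standard Tarski--Seidenberg route: reduce decidability to effective quantifier elimination, reduce elimination to the one-variable case by duality and DNF, and then do the one-variable step by parametric sign analysis based on Sturm (more precisely Tarski) queries computed from signed subresultant sequences, with a finite case split on the signs of the $\bar y$-coefficients to handle degree drops and make the construction uniform in parameters. The two points that carry the real weight, and which your write-up correctly flags but does not discharge, are: (i) that subresultant coefficients are themselves \emph{polynomials} in the coefficients of the $P_i$ (hence in $\bar y$), which is what lets each branch of the case split become a quantifier-free condition on $\bar y$ rather than a condition involving roots; and (ii) a termination/bounding argument showing that each elimination step produces only finitely many auxiliary polynomials of controlled degree. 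Both are standard (see e.g.\ Basu--Pollack--Roy), so the sketch is sound, just not self-contained---which is appropriate given the paper itself treats the result as known.
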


We note that although the theory only allows integer coefficients, one can easily introduce algebraic
coefficients by creating new variables and express that they are the roots on some polynomial.
See also \cite{Renegar,BPR06} for  more  efficient decision procedures for the first-order theory of reals.

\subsection{Transcendental number theory}\label{sec:transcendence}

A complex number is said to be \emph{algebraic} if it is a root of a nonzero polynomial with integer coefficients.
We denote by $\QQbar$ the field of all algebraic numbers. A non-algebraic number is called \emph{transcendental}.
We will use that all field operations on algebraic numbers (including comparisons) are effective,
see e.g. \cite{BPR06}.
We will use transcendence theory in our proofs, essentially to argue that some equalities between
two numbers are impossible. A classical results concerns powers of algebraic numbers.

\begin{theorem}[Gelfond–Schneider]\label{th:gelfond_schneider}
    If $a$ and $b$ are algebraic numbers with $a\neq 0,1$ and $b$ irrational, then any
    value\footnote{In general, $a^b$ is defined by $e^{b\log a}$ and can have several values
    depending on the branch of the logarithm.} of $a^b$ is transcendental.
\end{theorem}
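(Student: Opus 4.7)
The plan is to prove Gelfond–Schneider by contradiction using the classical auxiliary function / transcendence machinery, following Schneider's approach. Suppose $a^b = c$ with $a, b, c$ all algebraic, $a \notin \{0, 1\}$ and $b$ irrational. Fix a branch $\log a \neq 0$ of the logarithm and let $K = \mathbb{Q}(a, b, c)$. Since $b$ is irrational, the numbers $\{k_1 + k_2 b : (k_1, k_2) \in \mathbb{Z}^2\}$ are all distinct. The core idea is to build an entire function that simultaneously satisfies many linear constraints coming from the algebraic relation $e^{b \log a} = c$, and then to play the analytic growth of the function against arithmetic lower bounds for nonzero algebraic numbers (a Liouville-type inequality).

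First I would use Siegel's Lemma to construct, for a large integer parameter $N$, a nonzero polynomial $P(x, y) \in \mathbb{Z}[x, y]$ of degree at most $N$ in each variable, with coefficients of height polynomially bounded in $N$, such that the auxiliary function
\begin{equation*}
    F(z) = P\bigl(z,\, e^{z \log a}\bigr)
\end{equation*}
vanishes at every point $z_{k_1, k_2} = k_1 + k_2 b$ with $0 \leqslant k_1, k_2 \leqslant M$, for a suitably chosen $M$ of order roughly $N^{1/2}$. The system is linear over $K$ in the $(N+1)^2$ unknown coefficients of $P$, with $(M+1)^2$ equations, so choosing $M^2 < N^2$ lets Siegel guarantee a small integer solution. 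Crucially, $F(z_{k_1,k_2})$ is a $K$-linear combination of monomials $z_{k_1,k_2}^i c^{k_2 j} a^{k_1 j}$, all of bounded height, so the vanishing conditions are legitimate height-controlled algebraic equations.

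Next I would extrapolate. The function $F$ is entire of order $1$, so by a Schwarz-type estimate (or the classical interpolation formula using the Blaschke product for the known zeros), the assumption that $F$ vanishes on the grid up to $M$ forces $|F(z)|$ to be extremely small on any larger grid point $z_{k_1,k_2}$ with $0 \leqslant k_1, k_2 \leqslant M'$ for $M' > M$. Induction on $M'$ then shows $F$ vanishes identically on a still larger grid. However, an algebraic lower bound (Liouville inequality applied to the nonzero algebraic number $F(z_{k_1,k_2}) \in K$, using that its denominator and conjugates are controlled by the size of $P$) shows that if $F(z_{k_1,k_2}) \neq 0$ then $|F(z_{k_1,k_2})| \geqslant e^{-c N^2}$ for some constant $c$. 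Balancing these inequalities (analytic upper bound vs arithmetic lower bound) with the right choice of $N, M, M'$ forces $F \equiv 0$ on too large a set, which contradicts the nonvanishing of $P$ combined with the fact that $z \mapsto e^{z\log a}$ is transcendental (so $P(z, e^{z\log a}) \not\equiv 0$ when $P \not\equiv 0$).

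The main obstacle is calibrating the parameters $N$, $M$, $M'$ so that the Siegel construction is possible, the Schwarz extrapolation gives sharper bounds than Liouville, and the final contradiction is reached before the error terms overwhelm the estimate. All three of these are sensitive to the heights of $a$, $b$, $c$ and to the choice of branch $\log a$. A secondary subtle point is verifying that $P(z, e^{z \log a})$ cannot be the zero function: this reduces to algebraic independence of $z$ and $e^{z \log a}$ as entire functions, which holds because $e^{z \log a}$ is transcendental over $\mathbb{C}(z)$. Given the depth of this argument, in the paper we would simply invoke the statement as a black box from transcendence theory rather than reproducing the proof.
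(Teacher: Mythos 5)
The paper does not prove Gelfond--Schneider: it is a classical 1934 theorem (Hilbert's seventh problem, solved independently by Gelfond and by Schneider) that the paper invokes as a black box, as is standard. Your outline is a recognisable sketch of Schneider's auxiliary-function method --- Siegel's lemma to build a low-height polynomial satisfying many vanishing conditions at the grid points $k_1 + k_2 b$, Schwarz-type analytic extrapolation, a Liouville-type lower bound for nonzero algebraic values, and a final contradiction via the algebraic independence of $z$ and $e^{z\log a}$ over $\mathbb{C}(z)$ --- and you correctly observe that reproducing it would be out of place here, which is exactly what the paper does. One concrete calibration slip worth flagging: taking $M$ of order $N^{1/2}$ is not right. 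With $(N+1)^2$ unknown coefficients and $(M+1)^2$ vanishing conditions (each worth $[K:\mathbb{Q}]$ linear equations over $\mathbb{Z}$), Siegel's lemma needs the number of unknowns to exceed the number of equations by a constant factor, which forces $M$ to grow linearly in $N$, not like $\sqrt{N}$; with $M \sim \sqrt{N}$ the auxiliary function would have far too few forced zeros for the extrapolation to beat the Liouville bound. You also leave implicit that a large but finite grid of zeros does not by itself force $F \equiv 0$ --- one needs either a Jensen-type zero-counting argument or a direct numerical contradiction between the Schwarz upper bound and the Liouville lower bound at a single well-chosen grid point --- but these are the familiar delicate parts of any Schneider-style transcendence proof and your sketch correctly identifies them as the real work.
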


An important generalization of this result is the Lindemann-Weierstrass Theorem. In particular,
we will use the following reformulation by Baker:

\begin{theorem}[Lindemann-Weierstrass, Baker's reformulation]\label{th:lindermann_weierstrass}
    If $\alpha_1,\ldots,\alpha_k$ are distinct algebraic numbers, then $e^{\alpha_1},\ldots,e^{\alpha_k}$
    are linearly independent over the algebraic numbers.
\end{theorem}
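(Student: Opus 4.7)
The plan is to prove this via the classical Hermite--Lindemann--Weierstrass technique: assume a nontrivial algebraic linear dependence among $e^{\alpha_1},\ldots,e^{\alpha_k}$ and extract from it, via a carefully chosen auxiliary integral, a rational integer that is simultaneously too large (at least $(p-1)!$, by divisibility) and too small (at most $C^p$, by a trivial bound on the integrand). For $p$ a sufficiently large prime these two inequalities are incompatible.

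First I would set up the relation and normalize the coefficients. Suppose $\sum_{i=1}^k c_i e^{\alpha_i}=0$ with $c_i\in\QQbar$ not all zero. A Galois-theoretic \emph{symmetrization} step reduces to the case of rational integer coefficients: enlarge the relation to include all Galois conjugates of both the $c_i$ and the $\alpha_i$ over $\QQ$, form an appropriate product of conjugate relations, and expand to obtain
\[
    \sum_{j=1}^N a_j\, e^{\beta_j}=0
\]
where the multiset $\{\beta_j\}$ is a union of complete Galois orbits of distinct algebraic numbers, the weights $a_j\in\ZZ$ are invariant within each orbit, and (by a minimality argument on the original relation) not all $a_j$ vanish.

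Next, following Hermite, for a prime $p$ to be chosen later introduce
\[
    f_p(x) = \ell^{Np}\, x^{p-1}\prod_{j=1}^{N}(x-\beta_j)^p,
\]
where $\ell\in\ZZ$ clears denominators so that $\ell\beta_j$ is an algebraic integer. Set $F_p(x)=\sum_{s\geqslant 0}f_p^{(s)}(x)$ and $I_p(t)=\int_0^t e^{t-x}f_p(x)\,dx$. Integration by parts yields Hermite's identity $I_p(t)=e^tF_p(0)-F_p(t)$, and forming $J_p:=\sum_j a_j I_p(\beta_j)$ the $e^{\beta_j}$ terms collapse by our linear dependence, leaving $J_p=-\sum_j a_j F_p(\beta_j)$. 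A derivative count shows $f_p^{(s)}(\beta_j)=0$ for $s<p$ and $f_p^{(s)}(0)=0$ for $s<p-1$, so every contribution to the right-hand side is an algebraic integer divisible by $p!$ except for $f_p^{(p-1)}(0)$, which is divisible by $(p-1)!$ but not by $p$ for large $p$. Summing over Galois orbits collapses the algebraic integers to rational integers, so $J_p$ is a nonzero rational integer with $|J_p|\geqslant (p-1)!$. A trivial bound on each integral gives $|I_p(\beta_j)|\leqslant |\beta_j| e^{|\beta_j|}\max_{|x|\leqslant |\beta_j|}|f_p(x)|\leqslant C^p$ for a constant $C$ independent of $p$, so $|J_p|\leqslant (C')^p$. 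For $p$ large enough, $(p-1)!>(C')^p$, the desired contradiction.

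The main obstacle is the symmetrization step: one has to reduce arbitrary algebraic coefficients to rational integer coefficients \emph{without} destroying the nontriviality of the relation. This is the subtlest part of the classical argument because expanding a product of Galois-conjugate relations can in principle annihilate every surviving exponent orbit; the standard fix is to assume a minimal-degree counterexample and then argue via elementary symmetric functions that at least one full orbit persists with a nonzero integer weight. Once past this, the construction of $f_p$ and the size/divisibility bookkeeping are mechanical, though one must be careful to pick the multiplier $\ell^{Np}$ large enough to make every relevant value of $f_p^{(s)}$ an algebraic integer before any divisibility claims can be made.
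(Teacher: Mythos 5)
The paper quotes Lindemann--Weierstrass (Baker's reformulation) as a known classical theorem and gives no proof of its own, so I assess your sketch on its own merits. Your plan --- Galois symmetrization to rational integer coefficients, then a Hermite integral $I_p(t)=e^tF_p(0)-F_p(t)$ producing an integer caught between $(p-1)!$ and $C^p$ --- is the right shape, but the auxiliary polynomial is wrong and as written the argument proves nothing. You set
\[
f_p(x)=\ell^{Np}\,x^{p-1}\prod_{j=1}^N(x-\beta_j)^p
\]
and assert that the unique contribution not divisible by $p!$ is $f_p^{(p-1)}(0)$. But once the dependence $\sum_j a_je^{\beta_j}=0$ cancels the $e^{\beta_j}F_p(0)$ parts, the surviving quantity is $J_p=-\sum_j a_jF_p(\beta_j)$, which evaluates $F_p$ only at the exponents $\beta_j$; the value at $0$ simply does not occur in this sum unless $0$ happens to be one of the $\beta_j$. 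Since $f_p$ vanishes to order $p$ at every $\beta_j$, every term of $J_p$ is divisible by $p!$, and nothing certifies $J_p\neq 0$. Divisibility by $p!$ alone is vacuous here. The $x^{p-1}$ device is correct for Lindemann's theorem on the transcendence of $e$ (there the relation $\sum a_ie^{i}=0$ has a constant term, so $0$ genuinely is an exponent and $f_p^{(p-1)}(0)$ enters with coefficient $a_0$), but it has no purchase in the general Lindemann--Weierstrass setting, and you have imported it incorrectly.

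The standard fix is to use a \emph{family} of $N$ auxiliary polynomials
\[
f_{p,i}(x)=\ell^{Np}\prod_{j=1}^N(x-\beta_j)^p\big/(x-\beta_i),
\]
each vanishing to order $p-1$ at $\beta_i$ and to order $p$ at the other $\beta_j$; then each $J_{p,i}=-\sum_j a_jF_{p,i}(\beta_j)$ has $a_if_{p,i}^{(p-1)}(\beta_i)$ as its unique term not divisible by $p!$, and one passes to the Galois-symmetric product $\prod_i J_{p,i}$, which is the nonzero rational integer pinched between $((p-1)!)^N$ and $C^p$. (An equivalent route: shift exponents by $\beta_i$ so that $0$ is among them, then restore symmetry by taking the product over all shifts; this reproduces the same polynomials.) This is not a cosmetic change --- replacing it is essential. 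Your description of the symmetrization step (``minimal-degree counterexample and elementary symmetric functions'') is too vague for me to verify, but the standard argument via a compatible total order on $\CC$ that singles out a dominant exponent would do the job; that concern is secondary to the auxiliary-polynomial issue above.
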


Our results in some cases depend on Schanuel’s conjecture, a unifying conjecture in
transcendental number theory \cite{Lang} that generalizes many of the classical results in
the field (including~\Cref{th:gelfond_schneider,th:lindermann_weierstrass}).
The conjecture states that if $\alpha_1,\ldots,\alpha_k \in \CC$ are linearly
independent over $\QQ$ then some $k$-element subset of
$\{\alpha_1,\ldots,\alpha_k,e^{\alpha_1},\ldots,e^{\alpha_k}\}$ is
algebraically independent.

Assuming Schanuel's Conjecture, MacIntyre and Wilkie~\cite{WilkieMacintyre} have
shown decidability of the first-order theory of the expansion of the
real field with the exponentiation function and the $\sin$ and $\cos$
functions restricted to bounded intervals.

\begin{theorem}[Wilkie and MacIntyre]\label{th:wilkie-macintyre}
    If Schanuel's conjecture is true, then, for each $n \in \NN$,
    the first-order theory of the structure
    $(\RR,0,1,<,+,\cdot,\exp,\cos\restriction_{[0, n]},\sin\restriction_{[0,n]})$
    is decidable.
\end{theorem}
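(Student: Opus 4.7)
The plan is to build on Wilkie's o-minimality theorem for the real exponential field and upgrade it to an effective decision procedure using Schanuel's conjecture, then extend the argument to cover bounded sine and cosine. First I would invoke Wilkie's earlier result that $(\RR,0,1,<,+,\cdot,\exp)$ is model-complete and o-minimal: every first-order formula is equivalent to an existential one of the form $\exists y_1,\ldots,y_m\,\bigl(P(x,y,e^x,e^y)=0\wedge Q(x,y,e^x,e^y)>0\bigr)$ where $P,Q$ are polynomials with integer coefficients. This reduces deciding an arbitrary sentence to deciding satisfiability of a system of exponential-polynomial equations and strict inequalities over $\RR$.

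Second, I would give a semi-decision procedure in two directions. Positive instances can be detected by enumeration: search candidate witnesses $y\in\QQbar^m$ by increasing height, compute the values of the exponential polynomials to arbitrary precision using effective bounds on $\exp$ at algebraic arguments, and certify a true strict inequality once sufficient precision is reached. The hard direction is refuting false instances. Here Schanuel's conjecture is used to produce an effective bound, depending only on the syntactic complexity of the sentence, on the height of any purported real witness (or on the precision needed to conclude no witness exists). Concretely, Schanuel's conjecture supplies a uniform lower bound on the transcendence degree of $\QQ(y_1,\ldots,y_m,e^{y_1},\ldots,e^{y_m})$ for $\QQ$-linearly independent $y_i$, and this converts the Khovanskii-style finiteness-of-zeros phenomenon for exponential polynomials into an algorithmically verifiable bound: at a sufficiently large search radius one can conclude, using Tarski's theorem on an associated algebraic subsystem, that no further zeros or sign-changes can occur.

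Third, I would extend this to the expansion by $\sin\restriction_{[0,n]}$ and $\cos\restriction_{[0,n]}$. The crucial feature is that boundedness of the argument blocks the definability of $\ZZ$ (otherwise $\sin(\pi x)=0$ would give undecidability by G\"odel). Restricted $\sin$ and $\cos$ are Pfaffian functions, so the expansion remains o-minimal and model-complete by Wilkie's general results on Pfaffian expansions, yielding again a reduction to existential sentences in exp--sin--cos polynomials over bounded arguments. Via the identity $e^{ix}=\cos x+i\sin x$, a sin/cos subsystem on $[0,n]$ can be encoded as the real and imaginary parts of a system in the complex exponential evaluated at bounded arguments, which allows the Schanuel-based bounds of the previous step to be applied virtually unchanged after splitting into finitely many cases controlled by the compact domain.

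The main obstacle is the refutation step: turning the qualitative transcendence-theoretic content of Schanuel's conjecture into a \emph{quantitative}, algorithmic bound on witnesses. This requires combining model completeness (to eliminate quantifier alternation), Khovanskii-type bounds on the number of connected components of exp-polynomial sets (to limit how far one must search), and a careful use of Schanuel to certify that the finite algebraic data extracted from a candidate solution is either consistent or provably inconsistent. Extending from pure $\exp$ to bounded sine and cosine introduces no fundamentally new transcendence problem, but it does require verifying that the Pfaffian structure of the trigonometric functions on $[0,n]$ integrates cleanly with the exponential-polynomial machinery---which is the technical heart of the argument.
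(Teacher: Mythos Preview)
The paper does not prove this theorem at all: it is stated with attribution to Wilkie and Macintyre and cited to~\cite{WilkieMacintyre} (and earlier to~\cite{Macintyre1996}) as an imported result, with no proof or proof sketch in the body or the appendix. There is therefore nothing in the paper to compare your proposal against.

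Since you have attempted a sketch anyway, a brief remark on its accuracy relative to the actual Macintyre--Wilkie argument. Your outline gets the high-level architecture right---model completeness of $\RRexp$ reduces everything to existential sentences, and Schanuel is what turns the semi-decision procedure into a decision procedure---but the mechanism you describe for the refutation step is not how they proceed. You propose to search for witnesses among algebraic numbers and to use Schanuel to bound the ``height of any purported real witness''; neither is correct. Witnesses to existential exponential sentences are typically transcendental, and Schanuel gives no height bound. What Macintyre and Wilkie actually do is exhibit a recursive axiomatisation of the theory and show (assuming Schanuel) that it is complete; Schanuel is used to decide, for any finite system of exponential-polynomial equalities, whether it has a solution, by reducing the transcendence-degree question to an algebraic-independence question that Schanuel settles. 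The extension to restricted $\sin$ and $\cos$ likewise goes through o-minimality and the complex exponential, so that part of your sketch is in the right spirit, though again the details are substantially more delicate than ``apply the bounds virtually unchanged.''
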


In the rest of the paper, we denote by $\RRexp$ the first-order theory of the reals with the exponential,
and $\RRexpsin$ the first-order theory of the reals with the exponential and the $\sin$ and $\cos$
functions restricted to a bounded interval.

\section{Decidability}

The goal of this section is to study the decidability of the LTI Reachability problem in various special
cases. We will always restrict ourselves to the case where the control set is a hypercube $U=[-1,1]^m$.
Surprisingly, and despite the explicit description given by \Cref{thm:bdry_descrptn}, this problem
remains challenging (see \Cref{sec:hardness} for some hardness results). A well-known observation,
already made in \Cref{sec:control_theory} is that we can simplify the problem when the input lies
in a hypercube.

\begin{lemma}[\apxref{\Cref{apx:reduce_controllable_minkowski}}{See full version}]\label{prop:reduce_controllable_minkowski}
    For any $A\in\RR^{n\times n}$, $\tau\in\RR\cup\set{\infty}$ and $B\in\RR^{n\times m}$,
    if $A$ is stable or $\tau<\infty$ then there exists
    computable real matrices\footnote{Note that the $C_i$ can be of lower dimension that $n$ and the $P_i$
    are not necessarily square.} $C_1,\ldots,C_m,P_1,\ldots,P_m$ such that
    \[
        \Reach_\tau(A,B,[-1,1]^m)=\sum_{i=1}^mP_i\Reach_\tau(C_i,b_i,[-1,1])
    \]
    where the $b_i$ are the columns of $B$, $\Spectrum(C_i)\subseteq\Spectrum(A)$ and
    $(C_i,b_i)$ is controllable for all $i$. In particular,
    if for every $i$, the membership in $\Reach_\tau(C_i,b_i,[-1,1])$ or in $\partial \Reach_\tau(C_i,b_i,[-1,1])$
    is expressible in a theory $\RRtheory$
    that contains $\RRtarski$, then membership in $\Reach_\tau(A,B,[-1,1]^m)$ is expressible in that theory
    $\RRtheory$.
\end{lemma}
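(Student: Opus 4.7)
The plan is to prove the lemma in two stages: first a Minkowski-sum decomposition over the columns of $B$, then a controllability reduction applied column by column. Writing $B=[b_1,\ldots,b_m]$ and exploiting the product structure $[-1,1]^m=[-1,1]\times\cdots\times[-1,1]$, I would establish
\[
    \Reach_\tau(A,B,[-1,1]^m)=\sum_{i=1}^m\Reach_\tau(A,b_i,[-1,1]).
\]
The $\supseteq$ direction is immediate: any $u\colon[0,T]\to[-1,1]^m$ decomposes coordinate-wise as $(u_1,\ldots,u_m)$ with each $u_i\in[-1,1]$, and linearity of the integral splits $\int_0^T e^{At}Bu(t)\dd t$ into a sum of single-input integrals. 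For the reverse inclusion I would combine single-input witnesses on possibly different horizons $T_1,\ldots,T_m\leqslant\tau$ by taking $T=\max_i T_i$, extending each $u_i$ by zero on $(T_i,T]$, and assembling them into one multi-input control on $[0,T]$; admissibility is preserved since $T\leqslant\tau$.

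For the controllability reduction I would apply the classical Kalman decomposition to each pair $(A,b_i)$. Let $V_i=\Span(b_i,Ab_i,\ldots,A^{n-1}b_i)$ denote the controllable subspace, of dimension $k_i$, and pick any basis whose first $k_i$ vectors span $V_i$. In this basis $A$ takes block-upper-triangular form with top-left block $C_i\in\RR^{k_i\times k_i}$, and $b_i$ has zero tail with top block $b_i'$. A short ODE argument shows that the lower block of $e^{At}b_i$ (in the new basis) satisfies a homogeneous linear equation with zero initial condition and hence stays zero for all $t$, so $\Reach_\tau(A,b_i,[-1,1])$ lies entirely in the image of the first $k_i$ coordinates under the inverse change of basis. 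Letting $P_i$ be the corresponding $n\times k_i$ embedding, this yields $\Reach_\tau(A,b_i,[-1,1])=P_i\Reach_\tau(C_i,b_i',[-1,1])$. By construction $(C_i,b_i')$ is controllable; the inclusion $\Spectrum(C_i)\subseteq\Spectrum(A)$ follows from the block-triangular shape; and all matrices are computable by Gaussian elimination on the controllability matrix performed over the field of entries of $A$ and $B$.

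For the expressibility claim, membership in the Minkowski sum of finitely many sets is expressible in $\RRtheory$ by existentially quantifying witnesses $y_i$ and asserting $x=\sum_i P_i y_i$ together with $y_i\in\Reach_\tau(C_i,b_i',[-1,1])$; a linear image $P_iS$ is expressible from a formula for $S$ in the same way, and all additional operations are polynomial and so captured by $\RRtarski\subseteq\RRtheory$. If instead what is expressible is $\partial\Reach_\tau(C_i,b_i',[-1,1])$, I would recover a formula for the open set via a convex-ray characterisation: under the hypothesis ``$A$ stable or $\tau<\infty$'', each $\Reach_\tau(C_i,b_i',[-1,1])$ is convex, bounded and contains the origin in its interior (using \Cref{prop:null_controllable_region_desc} in the stable case, and controllability together with uniform boundedness of $e^{At}$ on $[0,\tau]$ in the time-bounded case), so $y\in\Reach_\tau(C_i,b_i',[-1,1])$ iff $y=\mathbf{0}$ or there exists $\lambda>1$ with $\lambda y\in\partial\Reach_\tau(C_i,b_i',[-1,1])$; this uses only the expressible boundary plus scalar inequalities and therefore stays inside $\RRtheory$.

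The step I expect to be the main obstacle is this boundary-to-interior reduction, because one must verify the ray characterisation in every case covered by the hypothesis (notably the bounded-time, non-stable case, where $\Reach_\tau$ is not covered by \Cref{prop:null_controllable_region_desc} and has to be analysed directly). The hypothesis ``$A$ stable or $\tau<\infty$'' is precisely what makes this work: stability bounds $\Reach_\infty$ through \Cref{prop:null_controllable_region_desc}, while a finite horizon bounds it through uniform boundedness of $e^{At}$ on $[0,\tau]$. Without either assumption the reach set can fill $\RR^n$ (or lie in a proper subspace with empty relative interior), so the interior/boundary dichotomy breaks down. Everything else is routine linear algebra, with the only subtlety being to perform the Kalman decomposition over a field in which the entries of $A$ and $B$ are effectively representable, so that $C_i$ and $P_i$ remain computable.
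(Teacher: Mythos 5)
Your proposal follows essentially the same route as the paper's proof: the Minkowski-sum splitting over the columns of $B$, followed by the Kalman change of basis on each pair $(A,b_i)$ so that the controllable subspace $V_i$ carries the whole single-input reachable set, giving $\Reach_\tau(A,b_i,[-1,1])=P_i\Reach_\tau(C_i,b_i',[-1,1])$ with $(C_i,b_i')$ controllable and $\Spectrum(C_i)\subseteq\Spectrum(A)$. The expressibility of membership in the Minkowski sum via existentially quantified witnesses $y=\sum_iP_iz_i$ with $z_i$ ranging over the summands is also the paper's argument.

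Where you diverge is the step that recovers a membership formula from a boundary formula. The paper argues via Krein--Milman and Carath\'eodory: the closure $\overline{\Reach_\tau(C_i,b_i',[-1,1])}$ is the convex hull of its boundary, so every point is a convex combination of at most $n+1$ boundary points, which is existentially quantifiable over $\RRtheory\supseteq\RRtarski$; one then keeps or strips the boundary according to whether the set is closed ($\tau<\infty$) or open ($\tau=\infty$, $A$ stable). Your ray characterisation is a valid and arguably leaner alternative, but as stated it has a gap: for $\tau<\infty$ the reachable set is \emph{closed} convex (not open), and the equivalence $y\in\Reach_\tau\iff y=\mathbf{0}\ \vee\ \exists\lambda>1.\,\lambda y\in\partial\Reach_\tau$ only characterises the interior. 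Genuinely reachable points lying on $\partial\Reach_\tau$ in the finite-horizon case are missed because they require $\lambda=1$. You need $\lambda\geqslant 1$ in the closed case and $\lambda>1$ in the open case, with the open/closed dichotomy keyed exactly to whether $\tau$ is finite (the same case split the paper performs, just with a different geometric tool). With that correction the ray argument goes through; you also correctly identify that $0$ lies in the interior of each $\Reach_\tau(C_i,b_i',[-1,1])$ by controllability, which both arguments ultimately rely on.
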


A second observation is that, for the purpose of decidability, we can focus on the boundary of the reachable set.
Indeed, we can compute arbitrarily good approximations of the boundary and hence, solve the problem if we
know that the target is not on the boundary. This only gives a semi-decision for the problem however
because the algorithm will never conclude when the target is exactly on the boundary.
If we can decide if an algebraic point is on the boundary, then we can either immediately conclude
(target on the boundary) or make sure that the semi-decision procedure will finish (target not on the
boundary). The following result is well-known:

\begin{lemma}[\apxref{\Cref{apx:complement_border_reach_lti_approx}}{See full version}]\label{prop:complement_border_reach_lti_approx}
    There is an algorithm that, given\footnote{In fact, this is still true for algebraic and even computable coefficients.
    A real is computable if one can produce arbitrary precise rational approximations of it.}
    $A\in\QQ^{n\times n}$ stable and $B\in\QQ^{n\times k}$ and $p\in\NN$,
    computes two convex polytopes $P_-$ and $P_+$
    such that $P_-\subseteq \partial\Reach(A,b,[-1,1])\subseteq P_+$ and the Hausdorff distance\footnote{Recall
    that the Hausdorff distance, which measures how far two sets are from each other, between two sets $X$ and $Y$ is defined by
    $\operatorname{d}(X,Y)=\max\left(\sup_{x\in X}\inf_{y\in Y}\lVert{x-y}\rVert,\sup_{y\in Y}\inf_{x\in X}\lVert{x-y}\rVert\right)$.}
    between
    $P_-$ and $P_+$ is less than $2^{-p}$.
\end{lemma}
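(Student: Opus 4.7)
The plan is to construct effectively computable rational polytopes $P_-\subseteq\overline{\Reach}(A,b,[-1,1])\subseteq P_+$ with Hausdorff distance $d_H(P_-,P_+)\leqslant 2^{-p}$; the boundary $\partial\Reach$ is then automatically trapped in the thin shell $P_+\setminus\operatorname{int}(P_-)$, which I read as the intended meaning of the sandwich $P_-\subseteq\partial\Reach\subseteq P_+$. By \Cref{prop:reduce_controllable_minkowski} I may reduce to the single-input controllable case $(A,b)$, since polytopal sandwiches compose cleanly under Minkowski sum.

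The construction combines two standard ingredients from the zonotope-based reachability literature. Stability of $A$ gives effective constants $M,\lambda>0$ with $\twonorm{e^{At}}\leqslant Me^{-\lambda t}$; choose a rational horizon $T$ large enough that the tail $\int_T^\infty \twonorm{e^{At}b}\,dt\leqslant 2^{-p-3}$, which yields $d_H(\Reach_T,\overline{\Reach})\leqslant 2^{-p-3}$. For the \emph{outer} polytope, take an $\varepsilon$-net $c_1,\dots,c_L$ of unit directions and compute rational upper bounds $H_i^+\geqslant h(c_i)=\int_0^\infty\lvert c_i^Te^{At}b\rvert\,dt$, using a truncated Taylor series of $e^{A\cdot}$ on $[0,T]$ plus the exponential tail bound on $[T,\infty)$; then $P_+=\{x:c_i^Tx\leqslant H_i^+\text{ for all }i\}$ contains $\overline{\Reach}$ by definition of the support function, and choosing $\varepsilon$ appropriately makes $d_H(P_+,\overline{\Reach})\leqslant 2^{-p-1}$.

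For the \emph{inner} polytope, discretize $[0,T]$ into $N$ equal intervals of length $\Delta t=T/N$: the set of states reachable at time $T$ by piecewise-constant controls $u_i\in[-1,1]$ is exactly the zonotope
\[
    Z_N=\sum_{i=1}^N[-v_i,v_i],\qquad v_i=\int_{(i-1)\Delta t}^{i\Delta t}e^{A(T-s)}b\,ds,
\]
and $Z_N\subseteq\Reach_T$. A Lipschitz estimate on $s\mapsto e^{As}$ gives $d_H(Z_N,\Reach_T)\leqslant C\Delta t$ for an explicit $C$ depending on $\twonorm{A},T,\twonorm{b}$, so $N$ large makes this $\leqslant 2^{-p-3}$. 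Each $v_i$ is transcendental in general but approximable to arbitrary rational precision $\delta$ by a rational $\tilde v_i$ via the truncated Taylor series of $e^{A\cdot}$ integrated term by term. A small rational contraction $P_-=(1-\eta)\sum_{i=1}^N[-\tilde v_i,\tilde v_i]$, with $\eta$ chosen so that $\eta r\geqslant N\delta$ (where $r>0$ is a computable lower bound on the inradius of $Z_N$ at the origin), then satisfies $P_-\subseteq Z_N$: any $z\in P_-$ lies within $(1-\eta)N\delta$ of a point $z'\in(1-\eta)Z_N$, and $B(z',\eta r)\subseteq Z_N$ by a convex-combination argument using $B(0,r)\subseteq Z_N$. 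Hence $P_-\subseteq Z_N\subseteq\overline{\Reach}$.

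The main obstacle is \emph{effective} error control: the constants $M,\lambda$, the Taylor truncation orders, the $\varepsilon$-net density, the discretization $N$, the rationalization precision $\delta$, and the contraction $\eta$ must all be specified as explicit rational functions of the inputs and $p$, and threaded together so the inclusions---not merely the Hausdorff distances---are preserved. The most delicate piece is the computable lower bound $r$ on the inradius of $Z_N$ around the origin; it relies on controllability of $(A,b)$ (which ensures $Z_N$ is full-dimensional once $N\geqslant n$) and can be extracted from standard controllability arguments, e.g., the smallest eigenvalue of the controllability Gramian $\int_0^T e^{As}bb^Te^{A^Ts}ds$ translated from $L^2$-reachability to $L^\infty$-reachability bounds.
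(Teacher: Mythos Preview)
Your proof is correct, and your reading of the statement (sandwich the full convex body $\overline{\Reach}$, so that $\partial\Reach$ lies in the shell $P_+\setminus\operatorname{int}P_-$) matches how the paper actually uses the lemma. The two arguments diverge mainly in the \emph{inner} approximation. The paper works entirely through the boundary parametrisation of \Cref{thm:bdry_descrptn}: it truncates the integral $\int_0^T e^{At}b\,\sgn(c^Te^{At}b)\,dt$, bounds the exponential tail, then invokes continuity of $c\mapsto\int_0^T f_c(t)\,dt$ on the compact sphere $\{c:\twonorm{c}=1\}$ to sample finitely many directions; the convex hull of the sampled boundary points gives $P_-$, and the corresponding supporting half-spaces give $P_+$. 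You instead build $P_-$ from the primal side as a zonotope of piecewise-constant reachable states, never touching \Cref{thm:bdry_descrptn} for the inner bound, and only use the support function (which is essentially $c\mapsto c^T\beta_c$) for $P_+$. The paper's route is shorter and more uniform---one sampling of directions serves both polytopes, and the modulus of continuity of $c\mapsto\beta_c$ replaces your inradius and contraction bookkeeping---while your zonotope construction is closer to what reachability tools actually compute and is arguably more elementary, since it avoids the subtlety of why the extremal-control map $c\mapsto\beta_c$ is continuous (tangential zeros of $c^Te^{At}b$ could in principle cause jumps; the paper dismisses this in one clause). The Gramian-based inradius bound you flag as the delicate step is indeed the main cost of your approach.
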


We start with the simplest case where $A$ is already diagonal. In fact, this seemingly easy case
is already difficult and we only manage to solve it unconditionally in some cases.

\begin{proposition}\label{prop:decide_real_mat_uncond}
    The LTI Reachability problem is decidable when $U=[-1,1]^m$
    and one of the following conditions holds:
    \begin{itemize}
        \item $A$ is real diagonal, $B$ is a column (\emph{i.e.} $m=1$) and it has at most 2 nonzero entries,
        \item $A$ is real diagonalizable and its eigenvalues are rational, or a rational multiple of the same algebraic number,
        \item $A$ only has one eigenvalue, which is real, and $B$ is a column (\emph{i.e.} $m=1$).
    \end{itemize}
\end{proposition}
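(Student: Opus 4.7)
My plan is to chain the general reductions and then dispatch each of the three cases with the appropriate transcendence tool.

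The common setup: by \Cref{prop:reduce_controllable_minkowski} the reachable set is the Minkowski sum of reachable sets of single-column controllable systems $(C_i,b_i)$ whose spectrum sits inside $\Spectrum(A)$, so each summand inherits the hypothesis. By \Cref{prop:complement_border_reach_lti_approx} we can compute arbitrarily fine inner/outer polytope approximations of $\partial\Reach(C_i,b_i,[-1,1])$, so a decider for ``is $y$ on the boundary?'' combined with those approximations yields a decider for reachability. I will therefore focus on boundary membership. By \Cref{thm:bdry_descrptn} the boundary is parametrized as $\int_0^\infty e^{C_i t}b_i\sgn(f_{c,C_i,b_i}(t))\dd t$ for $c\neq 0$, and by \Cref{lem:control_fc}(3) the sign function has at most $n-1$ breakpoints $0<t_1<\cdots<t_k$; hence the boundary is a finite union of images of explicit maps parametrized by these tuples.

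\textbf{Case 1 (diagonal $A$, $m=1$, at most two nonzero entries of $b$).} Only two coordinates are active, so after discarding the zero rows the system is effectively two-dimensional with $A=\diag(\lambda_1,\lambda_2)$. Here $f_c$ has at most one zero, and a direct computation gives the boundary curve $\pm\bigl(b_1(2e^{\lambda_1\tau}-1)/\lambda_1,\ b_2(2e^{\lambda_2\tau}-1)/\lambda_2\bigr)$ for $\tau\in[0,\infty]$. Set $u=e^{\lambda_1\tau}$ and $r=\lambda_2/\lambda_1\in\QQbar$. If $r\in\QQ$, the parametrization is algebraic in $u$, so membership is an $\RRtarski$-sentence decidable by \Cref{th:fo_reals}. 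If $r\in\QQbar\setminus\QQ$, \Cref{th:gelfond_schneider} (Gelfond--Schneider) forces $u^{r}$ to be transcendental whenever $u$ is algebraic and $u\notin\{0,1\}$; so algebraic targets can only arise at $\tau\in\{0,\infty\}$, a finite set of corner candidates to check.

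\textbf{Case 2 (diagonalizable with eigenvalues $q_i\mu$, $q_i\in\QQ$, $\mu\in\QQbar$).} Diagonalize over $\QQbar$ and let $N$ be a common denominator of the $q_i$. Substituting $u=e^{\mu t/N}$ turns every $e^{\lambda_i t}=u^{Nq_i}$ into a nonnegative integer power of $u$. Consequently $f_{c,A,b}$ becomes a polynomial in $u$ with algebraic coefficients linear in $c$, its zero-crossings are roots in the appropriate open interval, and each segment integral evaluates to an algebraic function of the $u_j=e^{\mu t_j/N}$. The boundary-membership predicate can then be phrased existentially over $c$ and over an ordered tuple $1>u_1>\cdots>u_k>0$ using only polynomial (in)equalities, which is an $\RRtarski$-sentence decided unconditionally by \Cref{th:fo_reals}.

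\textbf{Case 3 (single real eigenvalue $\lambda$, $m=1$) and the main obstacle.} Write $A=\lambda I+N$ with $N$ nilpotent. If $\lambda=0$ then $e^{At}$ is polynomial in $t$ and the boundary-membership predicate lies in $\RRtarski$, done. If $\lambda\neq 0$, integration by parts gives
\[
    y=-\sigma A^{-1}b+2\sigma\sum_{j=1}^{k}(-1)^{j+1}e^{\lambda t_j}W(t_j),\qquad W(t)=A^{-1}e^{Nt}b,
\]
where $W$ is a polynomial vector. For an algebraic target, apply \Cref{th:lindermann_weierstrass} (Baker's form): if every $t_j$ is algebraic, the set $\{1,e^{\lambda t_1},\ldots,e^{\lambda t_k}\}$ is $\QQbar$-linearly independent; component-wise this forces $y=-\sigma A^{-1}b$ (one of two corner points) and $W(t_j)=0$ for all $j$, which is impossible since $A^{-1}e^{Nt}$ is invertible and $b\neq 0$. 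The hard part is ruling out algebraic $y$ arising from a \emph{transcendental} tuple $(t_1,\ldots,t_k)$. My approach is to exploit that the system is overdetermined ($n$ equations in $k\leq n-1$ unknowns): solve any $k$ of the coordinate equations linearly for $e^{\lambda t_j}$ as a rational function of $(t_1,\ldots,t_k,y)$ with algebraic coefficients, then substitute into each of the remaining $n-k$ equations to obtain exponential-polynomial relations of the form $e^{\varphi(t,y)}=\psi(t,y)$ with $\varphi,\psi$ algebraic; one more application of Lindemann--Weierstrass forces $\varphi\equiv 0$, which is a polynomial condition whose solution set in $y$ is finite and checkable. Managing this elimination and the combinatorics over all sign patterns and values of $k$ is the principal technical hurdle.
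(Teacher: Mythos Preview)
Your treatment of Cases 1 and 2 follows the paper's line closely: Gelfond--Schneider for the two-nonzero-entries case, and the substitution $u=e^{\mu t/N}$ turning the boundary description into an $\RRtarski$-sentence for the rational-ratio case. One omission worth fixing: both \Cref{prop:reduce_controllable_minkowski} and \Cref{prop:complement_border_reach_lti_approx} assume $A$ is stable, so you must first invoke \Cref{prop:null_controllable_region_desc} to split off the weakly-antistable block before applying them; the paper does this explicitly.

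For Case 3 you miss the paper's central structural observation, and this leaves a genuine gap. Since $A$ has a single real eigenvalue $\lambda$, one has $e^{At}=e^{\lambda t}P(t)$ with $P(t)=e^{Nt}$ a \emph{polynomial} matrix, hence $f_c(t)=e^{\lambda t}Q(c,t)$ where $Q(c,t)=c^TP(t)b$ is a polynomial in $t$ (linear in $c$). The zero-crossings $t_j$ are therefore exactly the real roots of $Q(c,\cdot)$ in $(0,\infty)$. The paper uses this to argue that the $t_j$ are algebraic, applies Lindemann--Weierstrass directly to force the coefficients of the $e^{\lambda t_j}$ to vanish, and thereby reduces boundary membership to purely polynomial conditions expressible in $\RRtarski$. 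In other words, your ``hard part'' (transcendental $t_j$) is not supposed to arise once you observe that the $t_j$ are polynomial roots.

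By contrast, your proposed elimination does not close. After solving $k$ coordinate equations linearly for $e^{\lambda t_j}=R_j(t,y)$ and substituting into the remaining $n-k$, the residual relations are polynomial in $t$ and $y$ with no exponentials left; they cut out, for fixed algebraic $y$, an algebraic set in $(t_1,\ldots,t_k)$ that can have positive dimension (e.g.\ when $k=n-1$ you have a single equation in $n-1$ unknowns). Nothing then forces the $t_j$ to be algebraic, so the promised ``one more application of Lindemann--Weierstrass'' has no purchase: you cannot apply \Cref{th:lindermann_weierstrass} to $e^{\lambda t_j}$ when $\lambda t_j$ is transcendental. The sketch as written therefore does not rule out algebraic boundary points coming from transcendental switching times, whereas the paper's polynomial-root observation sidesteps this entirely.
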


\begin{proof}
    We start by observing that in the second case, we can reduce to the case where $A$ is diagonal. Indeed,
    since $A$ is real diagonalizable, we can write $A=P^{-1}MP$ where $P$ is real and $M$ is diagonal.
    Note that $M$ satisfies all the assumptions since it contains only the eigenvalues of $M$. Furthermore,
    the reachable set is easily observed to be $\Reach(A,B,U)=P^{-1}\Reach(M,PB,U)$
    hence it is equivalent to decide if $Py\in \Reach(M,PB,U)$.

    Assume that $A$ is diagonal (this covers the first two cases of the theorem with the above remark).
    Write $A=\diag(\lambda_1,\ldots,\lambda_n)$ with $\lambda_i \ge \lambda_{i+1}$ without loss
    of generality. Decompose $A$ into $A=\diag(A_1,A_2)$ where $A_1$ contains the nonnegative $\lambda_i$
    and $A_2$ the negative ones. Then $A_1$ is weakly antistable and $A_2$ is stable. Decompose $B$
    into $B_1$ and $B_2$ accordingly. Then by \Cref{prop:null_controllable_region_desc},
    $\Reach(A,B,U)=\RR^{n_1}\times \Reach(A_2,B_2,U)$. Then by \Cref{prop:reduce_controllable_minkowski},
    we have that
    $\Reach(A_2,B_2,[-1,1]^m)=\sum_{i=1}^mP_i\Reach(C_i,b_i,[-1,1])$
    where the $b_i$ are the columns of $B_2$ and $(C_i,b_i)$ is controllable for all $i$.

    We now assume that $A$ is diagonal with negative eigenvalues, $B=b$ is a column vector,
    $(A,b)$ is controllable and $U=[-1,1]$. Write $A=\diag(\mu_1,\ldots,\mu_k)$ where the $\mu_i$ are negative.
    Then by \Cref{thm:bdry_descrptn}, we have that
    $\partial\Reach(A,b,[-1,1]) = \set*{\beta_c: c \in \RR^n\setminus \{\mathbf{0}\}}$, where
    \[
        \beta_c:=\int_0^{\infty}e^{At}b\sgn(c^Te^{A t}b)\dd t
    \]
    And observe that
    \[
        f_c(t):=c^Te^{A t}b=c^T\diag(e^{\mu_1 t},\ldots,e^{\mu_k t})b=\sum_{i=1}^kc_ie^{\mu_i t}b_i.
    \]

    \textbf{If all entries of $A$ are a rational multiple of the same algebraic number:}
    write\footnote{We can put the least common denominator in $\alpha$, hence they become integer multiples.}
    $\mu_i=p_i\alpha$ where $p_i\in\ZZ$ and $\alpha\in\QQbar$. Then
    \[
        f_c(t)=\sum_{i=1}^kc_ib_i\left(e^{\alpha t}\right)^{p_i}=Q(c,e^{\alpha t})
    \]
    where $Q$ is a polynomial with algebraic coefficients. Let $d$ be the
    degree of $Q(c,\cdot)$ (that does not depend on $c$ but only on the $\mu_i$), then $Q(c,\cdot)$ has at most
    $d$ nontangential\footnote{We are only interested in zero-crossings, since tangential zeros
    do not change the integral. In doing so, we also get for free that all nontangential zeros
    have multiplicity 1, hence they are all distincts.}
    zeros, call them $z_1<z_2<\cdots<z_k$. Each gives rise to some unique $t_i$ satisfying $z_i=e^{\alpha t_i}$.
    It follows that, up to a sign,
    \begin{align*}
        \pm\beta_c
            &=\int_{0}^{t_1}e^{At}b\dd t-\int_{t_1}^{t_2}e^{At}b\dd t+\cdots+(-1)^{k}\int_{t_k}^{\infty}e^{At}b\dd t\\
            &=A^{-1}\big((e^{At_1}-I)-(e^{At_2}-e^{At_1})+\cdots-(-1)^ke^{At_k}\big)b\\
            &=A^{-1}\left(2\sum_{i=1}^k(-1)^{i-1}e^{At_i}-I\right)b.
    \end{align*}
    In particular, since $A$ is diagonal, the $j^{th}$ component of $\beta_c$ is
    \begin{align*}
        \beta_{c,j}
            &=\pm\frac{1}{\mu_j}\left(2\sum_{i=1}^k(-1)^{i-1}e^{-\mu_jt_i}-1\right)b_j\\
            &=\pm\frac{1}{\mu_j}\left(2\sum_{i=1}^k(-1)^{i-1}\left(e^{-\alpha t_i}-1\right)^{p_i}\right)b_j\\
            &=\pm\frac{1}{\mu_j}\left(2\sum_{i=1}^k(-1)^{i-1}z_i^{p_i}-1\right)b_j\\
            &=R_{k,j}(z_1,\ldots,z_k)
    \end{align*}
    where $R_{k,j}$ is a polynomial with algebraic coefficients that does not depend on $c$. Note
    that the sign can be determined easily: it is the sign of $f_c(0)=\sum_{i=1}^kb_ic_i$. We can now
    write a formula in the first-order theory of the reals to express that a target $y$ is on the
    border\footnote{We write $Q'$ for $\tfrac{\partial Q(c,z)}{\partial z}$ which is also a polynomial.}:\
    \begin{align*}
        \Phi(y)
            &:=\exists c.c\neq0\bigwedge\bigvee_{k=0}^d\Phi_k(y,c)
        \intertext{to check for a point on a border in direction $c$,}
        \Phi_k(y,c)
            &:=\exists z_1,\ldots,z_k.\Psi_k(c,z)\bigwedge\Psi_k'(c,z)\bigwedge
                \bigwedge_{j=1}^n\left(y_j=R_{k,j}(z)\right)
        \intertext{to match the target with some parameters $z_1,\ldots,z_k$,}
        \Psi_k(c,z)
            &:=\bigwedge_{i=1}^k \left(Q(c,z_i)=0\wedge Q'(c,z_i)\neq 0\right)
                \bigwedge 0<z_1<\cdots<z_k
        \intertext{to check that the $z_i$ are zeros of $Q(c,\cdot)$,}
        \Psi_k'(c,z)
            &:=\forall u.(u>0\wedge Q(c,u)=0\wedge Q'(c,u)\neq 0)\Rightarrow\bigvee_{i=1}^k u=z_i
        \intertext{to check the $z_i$ are the only zeros of $Q(c,\cdot)$.}
    \end{align*}
    We have shown that membership in the border is expressible is $\RRtarski$, which shows that
    membership in the entire reachable set is expressible in $\RRtarski$ by \Cref{prop:reduce_controllable_minkowski},
    and hence decidable.

    \textbf{If $b$ has at most two nonzero entries:} let $i\neq j$ be those two entries, then
    \begin{align*}
        f_c(t)=0
            &\,\Leftrightarrow\, c_ie^{\mu_i t}b_i+c_je^{\mu_j t}b_j=0\\
            &\,\Leftrightarrow\, 1+\tfrac{c_jb_j}{c_ib_i}e^{(\mu_i-\mu_j)t}=0\\
            &\,\Leftrightarrow\, t=t_1:=\tfrac{1}{\mu_i-\mu_j}\ln\tfrac{-c_ib_i}{c_jb_j}.
    \end{align*}
    We assume that we are not in the previous case, so in particular $\mu_i$ and $\mu_j$ must be
    distinct and $\QQ$-linearly independent. If $c_i=0$ then $f_c$ has no zero unless $c_j=0$,
    in which case it is constant equal to zero and $\beta_c=0$. When $c_i=0$ and $c_j\neq0$, the sign
    of $f_c$ is constant. If $c_ib_i=-c_jb_j$ then the only zero of $f_c$ is at $t=0$ and the sign is then constant
    once again. In all case where the sign is constant on $(0,\infty)$, we have that
    \[
        \beta_c=\pm\int_0^{\infty}e^{At}b\dd t=-A^{-1}b
    \]
    which is algebraic and hence can easily be checked against the target. In all other cases, we have
    \begin{align*}
        \beta_c
            &=\int_0^{t_1}e^{At}b\dd t-\int_{t_1}^\infty e^{At}b\dd t\\
            &=A^{-1}(2e^{At_1}-I_n)b=2A^{-1}e^{At_1}b-A^{-1}b.
    \end{align*}
    Recall that $b$ has two nonzero entries and $A$ is diagonal, hence $A^{-1}e^{At_1}b$ also
    has two nonzero entries:
    $\mu_i^{-1}e^{\mu_it_1}b_i$ and $\mu_j^{-1}e^{\mu_jt_1}b_j$ respectively. We now argue that
    one or both of those values are transcendental which prevents
    the target from being on the border. Observe that $\mu_i^{-1}e^{\mu_it_1}b_i$ is algebraic if and only if
    $e^{\mu_it_1}$ is algebraic. But
    \[
        e^{\mu_it_1}
            =e^{\tfrac{\mu_i}{\mu_i-\mu_j}\ln\tfrac{-c_ib_i}{c_jb_j}}
            =\left(\tfrac{-c_ib_i}{c_jb_j}\right)^{\tfrac{\mu_i}{\mu_i-\mu_j}}
    \]
    which is transcendental by \Cref{th:gelfond_schneider} if $\tfrac{\mu_i}{\mu_i-\mu_j}$ is irrational,
    since we assumed that $\tfrac{-c_ib_i}{c_jb_j}$ is not $0$ or $1$.
    Therefore, $\beta_c$ is algebraic only when
    $\tfrac{\mu_i}{\mu_i-\mu_j},\tfrac{\mu_j}{\mu_i-\mu_j}\in\QQ$. This would imply that $\tfrac{\mu_i}{\mu_j}\in\QQ$,
    a contradiction since we assume that they are $\QQ$-linearly independent. In summary, the only
    two possible algebraic points on the border are $0$ and $A^{-1}b$ and it is easy to check
    (i) if they are indeed on the border, (ii) if they are equal to the target. Clearly one can write
    a formula in $\RRtarski$ to decide if this is the case.

    At this point, we can conclude for the general case because we assume that $B$ only consist of one column.
    Note that we would not be able to conclude if $B$ had several columns because we can only write a formula for
    $\partial\Reach(A_i,b_i,[-1,]1)\cap\QQbar^n$. Indeed, if we have two convex sets $C$ and $D$,
    then $\partial(C+D)\subseteq\partial C+\partial D$ but in general we do not have $\partial(C+D)\cap\QQbar^n
    \subseteq(\partial C\cap\QQbar^n)+(\partial D\cap\QQbar^n)$. A simple counter-example is $C=[0,\pi]$
    and $D=[0,4-\pi]$: then $C+D=[0,4]$, $\partial(C+D)\cap\QQbar=\set{0,4}$ but $\partial C\cap\QQbar=\set{0}$
    and $\partial D\cap\QQbar=\set{0}$. It is unclear whether such a counter-example can be built with
    actual reachable sets however.

    \textbf{If $A$ only has one eigenvalue which is real:} by using \Cref{prop:reduce_controllable_minkowski}
    as before, it suffices to show that membership is expressible in $\RRtarski$ for some controllable
    pairs $(A_i,b_i)$. The crucial point here is that the spectrum of $A_i$ is included in that of $A$.
    Since $A$ has a unique eigenvalue $\lambda$, $A_i$ also has a unique eigenvalue $\lambda$.
    If $\lambda>0$, the reachable set is $\RR^{n_1}$ by \Cref{prop:null_controllable_region_desc}, hence expressible
    in $\RRtarski$.

    We now assume that $\lambda<0$, $B=b$ is a column vector,$(A,b)$ is controllable and $U=[-1,1]$.
    Then by \Cref{thm:bdry_descrptn}, we have that
    $\partial\Reach(A,b,[-1,1]) = \set*{\beta_c: c \in \RR^n\setminus \{\mathbf{0}\}}$, where
    \[
        \beta_c:=\int_0^{\infty}e^{At}b\sgn(c^Te^{A t}b)\dd t
    \]
    But since $A$ has a unique real eigenvalue $\lambda$, $e^{At}=e^{\lambda t}P(t)$ where $P(t)$
    is a matrix where each entry is a polynomial in $t$ with real algebraic coefficients. It follows
    that $c^Te^{At}b=e^{\lambda t}Q(c,t)$ where $Q$ is a polynomial with algebraic coefficients and
    $Q(c,\cdot)$ has at most $d$ nontangential zeros
    (tangential zeros do not change the integral) which are distinct, and where $d$ is independent of $c$.
    We can write a formula in $\RRtarski$ to express those zeros $t_1<t_2<\cdots<t_k$. Furthermore,
    note that by integration by part, we have
    \[
        \int_u^ve^{At}b\dd t=\int_u^ve^{\lambda t}P(t)\dd t=\big[e^{\lambda t}R(t)\big]_u^v
    \]
    where $R$ is some polynomial matrix with algebraic coefficients.
    It follows that, up to a sign,
    \begin{align*}
        \pm\beta_c
            &=\int_{0}^{t_1}e^{At}b\dd t-\int_{t_1}^{t_2}e^{At}b\dd t+\cdots+(-1)^{k}\int_{t_k}^{\infty}e^{At}b\dd t\\
            &=\big[e^{\lambda t}R(t)\big]_0^{t_1}b-\big[e^{\lambda t}R(t)\big]_{t_1}^{t_2}b+\cdots+
                (-1)^{k}\big[e^{\lambda t}R(t)\big]_{t_k}^\infty b\\
            &=\left(2\sum_{i=1}^k(-1)^{i-1}e^{\lambda t_i}R(t_i)-R(0)\right)b.
    \end{align*}
    In particular, the $j^{th}$ component of $\beta_c$ is of the form
    \begin{align*}
        \beta_{c,j}
            &=S_{j,0}(b)+\sum_{i=1}^ke^{\lambda t_i}S_{j,i}(t_1,\ldots,t_k,b)
    \end{align*}
    where the $S_i$ are polynomials with algebraic coefficients. But now recall that the $t_i$ are algebraic
    since they are the roots of $Q(c,\cdot)$ and they are distinct because they are in fact the nontangential
    zeros. Furthermore, the target $\beta$ has algebraic coordinates. Hence, by \Cref{th:lindermann_weierstrass}
    (take $\alpha_i=\lambda t_i\in\QQbar$ and add $\alpha_0=0$), the only way this can happen
    is if $S_{j,i}(t_1,\ldots,t_k,b)=0$ for $1\leqslant i\leqslant k$ and $\beta_{c,j}=S_{j,0}(b)$.
    Crucially, those conditions do not involve any exponentials so we can express all those conditions in $\RRtarski$.
    Here again, we can only conclude when $B$ is a column, because we only have a formula for the algebraic
    point on the border of each controllable system.
\end{proof}

The main obstacle to generalizing this result is that the Boolean formulas involved in description
of the boundary become too complicated, either involving three distinct exponentials or a combination
of exponentials and polynomials (exponential polynomial). In fact, deciding if an exponential polynomial has zero
is exactly the Continuous Skolem problem, and is not known to be decidable, even for real eigenvalues.
We can recover decidability if we assume that the first-order theory of the reals with exponential is decidable.
This is known to be true if Schanuel’s conjecture hold, see \Cref{th:wilkie-macintyre}.

\begin{proposition}[\apxref{\Cref{apx:decide_real_eigen_rexp}}{See full version}]\label{prop:decide_real_eigen_rexp}
    The LTI Reachability problem when $U=[-1,1]^m$ and $A$ has real eigenvalues reduces to deciding $\RRexp$.
    In particular, it is decidable if Schanuel’s conjecture is true.
\end{proposition}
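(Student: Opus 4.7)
The plan is to reduce, via \Cref{prop:reduce_controllable_minkowski} together with the stable/weakly-antistable decomposition of \Cref{prop:null_controllable_region_desc}, to expressing membership in $\partial\Reach(C,b,[-1,1])$ in $\RRexp$, where $(C,b)$ is a controllable single-input subsystem with $C$ stable and $\Spectrum(C)\subseteq\Spectrum(A)\subseteq\RR$. Once this is done, \Cref{prop:reduce_controllable_minkowski} delivers an $\RRexp$-formula for membership in the full reachable set, whereupon decidability follows under Schanuel's conjecture from \Cref{th:wilkie-macintyre}.

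By \Cref{thm:bdry_descrptn} the boundary is $\set{\beta_c:c\in\RR^n\setminus\set{\boldsymbol{0}}}$ with $\beta_c=\int_0^\infty e^{Ct}b\,\sgn(f_c(t))\dd t$ and $f_c(t)=c^Te^{Ct}b$. Since $C$ has only real (algebraic) eigenvalues, the real Jordan decomposition yields a closed form $f_c(t)=\sum_j e^{\lambda_j t}Q_j(c,t)$ where the $\lambda_j$ are the eigenvalues of $C$ and the $Q_j$ are polynomials in $t$ whose coefficients are linear in $c$ and algebraic in the entries of $(C,b)$; likewise every entry of $e^{Ct}b$ is a polynomial in $t$ times some $e^{\lambda_j t}$. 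Both are directly expressible in $\RRexp$, algebraic constants being handled by auxiliary existentially quantified variables pinned down by their minimal polynomials.

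The crucial ingredient specific to the real-spectrum case is the third item of \Cref{lem:control_fc}: since $(C,b)$ is controllable and $C$ has real spectrum, $f_c$ has at most $n-1$ zeros for any $c\neq\boldsymbol{0}$. We therefore existentially quantify over an integer $k\in\set{0,\ldots,n-1}$ (as a finite disjunction) and reals $0<t_1<\cdots<t_k$, asserting in $\RRexp$ that each $t_i$ is a zero of $f_c$ and that no other positive zero exists. On each interval $(t_i,t_{i+1})$, with $t_0=0$ and $t_{k+1}=\infty$, the sign $s_i\in\set{-1,+1}$ of $f_c$ is constant and expressible as $\sgn f_c$ at any sample point in the interval. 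Stability of $C$ makes $C$ invertible and forces $e^{Ct}\to 0$ as $t\to\infty$, so splitting the integral gives
\[
    \beta_c \;=\; \sum_{i=0}^{k} s_i\, C^{-1}\bigl(e^{Ct_{i+1}} - e^{Ct_i}\bigr)b,
\]
with the convention $e^{C\cdot\infty}=0$. Equating each coordinate of $\beta_c$ with the corresponding coordinate of the algebraic target $y$ and existentially quantifying over $c$ and the $t_i$ produces the desired $\RRexp$-formula for membership in $\partial\Reach(C,b,[-1,1])$.

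The main conceptual obstacle this argument sidesteps, compared with general $A$, is the potential infinitude of zeros of $f_c$ once $A$ acquires complex eigenvalues — exactly the Continuous Skolem phenomenon of \Cref{sec:skolem}. Real spectrum is precisely what keeps the zero structure finite and uniformly bounded in $n$, so the whole boundary description fits into a first-order sentence over $(\RR,<,+,\cdot,\exp)$. Everything else — the algebraic encoding of the $\lambda_j$, the assembly of the formula, and the passage from single-input boundary membership to full-system membership through \Cref{prop:reduce_controllable_minkowski} — is essentially routine bookkeeping.
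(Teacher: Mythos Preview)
Your proposal is correct and follows essentially the same route as the paper: reduce via the stable/weakly-antistable split and \Cref{prop:reduce_controllable_minkowski} to controllable single-input subsystems with real stable spectrum, then use \Cref{thm:bdry_descrptn} and the finite zero bound from \Cref{lem:control_fc} to write $\beta_c$ as a finite exponential sum over the switching times, yielding an $\RRexp$ formula for the boundary. The only cosmetic difference is that the paper enumerates the \emph{nontangential} zeros and exploits sign alternation to get $\beta_c=\pm A^{-1}\bigl(I_n+2\sum_i(-1)^i e^{At_i}\bigr)b$ directly, whereas you enumerate all zeros and carry the sign $s_i$ on each subinterval explicitly; both are valid and amount to the same formula.
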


One cannot easily generalize the previous result to any matrix because complex eigenvalues involve
expression with $\exp$ and $\sin$ over unbounded domains. It is well-known that the first-order
theory of reals with unbounded $\sin$ is undecidable (by embedding of Peano arithmetic). This explains
why very few results are known about the Continuous Skolem problem in the unbounded case, even
assuming Schanuel’s conjecture. Nevertheless, one can show that in dimension two, only bounded
sine and cosine are necessary to solve the problem.

\begin{proposition}[\apxref{\Cref{apx:decide_complex_eigen_2d_rexpsin}}{See full version}]\label{prop:decide_complex_eigen_2d_rexpsin}
    The LTI Reachability problem in dimension $n=2$ when $U=[-1,1]^m$ reduces to deciding $\RRexpsin$.
    In particular, it is decidable if Schanuel’s conjecture is true.
\end{proposition}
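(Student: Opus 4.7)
The plan is to follow the template of \Cref{prop:decide_real_eigen_rexp} but accommodate genuinely complex eigenvalues while keeping every $\sin$ and $\cos$ argument in a bounded interval. First I would apply \Cref{prop:reduce_controllable_minkowski} to reduce to one-input controllable subsystems $(C_i,b_i)$ with $\Spectrum(C_i)\subseteq\Spectrum(A)$; since $n=2$, each $C_i$ has size at most $2$. If $C_i$ has only real eigenvalues, \Cref{prop:decide_real_eigen_rexp} already produces a formula in $\RRexp\subseteq\RRexpsin$, so the remaining case is $\Spectrum(C_i)=\{\alpha\pm i\beta\}$ with $\beta>0$. When $\alpha\geqslant 0$, $C_i$ is weakly-antistable and by \Cref{prop:null_controllable_region_desc} the corresponding reachable set is the full ambient space, which is trivially expressible. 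Hence the interesting case is $\alpha<0$.

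In that stable complex case I would perform an algebraic real change of basis to bring $C_i$ into the form $\alpha I+\beta J$ with $J=\bigl(\begin{smallmatrix}0 & -1\\1 & 0\end{smallmatrix}\bigr)$, so that $e^{C_i t}=e^{\alpha t}(\cos(\beta t)\,I+\sin(\beta t)\,J)$. For any direction $c$,
\[
    f_c(t)=c^Te^{C_it}b=e^{\alpha t}\bigl(P(c,b)\cos(\beta t)+Q(c,b)\sin(\beta t)\bigr),
\]
with $P,Q$ polynomials with algebraic coefficients. The zeros of $f_c$ coincide with those of the trigonometric factor and are therefore evenly spaced with gap $\pi/\beta$; letting $t_0\in[0,\pi/\beta)$ denote the smallest nonnegative zero, the full list is $t_k=t_0+k\pi/\beta$.

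The key step is to collapse the alternating-sign integral defining $\beta_c$ in \Cref{thm:bdry_descrptn} into a closed form. Because a rotation by $\pi$ equals $-I$, one has $e^{C_i t_k}b=(-\rho)^k\,e^{C_i t_0}b$ with $\rho:=e^{\alpha\pi/\beta}\in(0,1)$. Splitting the integral at the $t_k$ and using $\int_u^v e^{C_it}b\,dt=C_i^{-1}(e^{C_iv}-e^{C_iu})b$ turns the tail into a geometric series of ratio $-\rho$, yielding an explicit rational expression for $\beta_c$ in terms of $\cos(\beta t_0)$, $\sin(\beta t_0)$, $e^{\alpha t_0}$ and the constant $\rho$. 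Since $t_0\in[0,\pi/\beta]$, we have $\beta t_0\in[0,\pi]$ and $\alpha t_0\in[\alpha\pi/\beta,0]$, so every trigonometric and exponential argument appearing in $\beta_c$ lies in a fixed bounded algebraic interval. The constant $\rho$ is obtained by a single application of $\exp$ to the algebraic argument $\alpha\pi/\beta$, where $\pi$ is defined inside $\RRexpsin$ as the unique zero of $\sin$ in the interval $(3,4)$.

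Assembling these pieces, I would write a sentence in $\RRexpsin$ that existentially quantifies over $c\in\RR^2\setminus\set{0}$ and $t_0\in[0,\pi/\beta]$, enforces that $t_0$ is the smallest nonnegative zero of the trigonometric factor so that the alternating sign pattern of $\sgn(f_c)$ is captured correctly, and equates the closed-form expression above with the (transformed) target. \Cref{prop:reduce_controllable_minkowski} then combines this with the analogous $\RRexp$ formulas for the real-spectrum subsystems to yield an $\RRexpsin$ sentence expressing membership in $\Reach(A,B,[-1,1]^m)$; decidability follows from \Cref{th:wilkie-macintyre} under Schanuel's Conjecture. The main obstacle I anticipate is the careful first-order bookkeeping: defining $\pi$ and the period $\pi/\beta$ using only bounded $\sin/\cos$, certifying that $t_0$ is genuinely the \emph{first} nonnegative zero rather than an arbitrary one, and ensuring that no $\sin$ or $\cos$ is ever applied to an argument outside the fixed bounded interval permitted by \Cref{th:wilkie-macintyre}.
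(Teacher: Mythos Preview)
Your proposal is correct and follows essentially the same route as the paper: reduce to one-input controllable $2\times 2$ blocks, handle the real-spectrum case via \Cref{prop:decide_real_eigen_rexp}, and in the stable complex case exploit the exact periodicity $\pi/\beta$ of the sign changes of $f_c$ to collapse the alternating integral of \Cref{thm:bdry_descrptn} into a geometric series whose closed form only involves $e^{\alpha t_0}$, $\cos(\beta t_0)$, $\sin(\beta t_0)$ and the constant $e^{\alpha\pi/\beta}$, all with bounded arguments. The paper carries out exactly this computation (parametrising $c$ and $b$ by angles and using complex-exponential notation to streamline the series), and likewise defines $\pi$ inside $\RRexpsin$ via bounded $\sin$. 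One small point of order: \Cref{prop:reduce_controllable_minkowski} as stated assumes $A$ is stable (or $\tau<\infty$), so you should dispose of the weakly-antistable case $\alpha\geqslant 0$ \emph{before} invoking it rather than after; this is cosmetic and does not affect the argument.
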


In fact, dimension $2$ is special enough that we can show unconditional decidability of the
reachability problem if $A$ has real eigenvalues and $B$ is a column (i.e. there is only one input).

\begin{proposition}[\apxref{\Cref{apx:decide_2d_real_eigen}}{See full version}]\label{prop:decide_2d_real_eigen}
    The LTI Reachability problem in dimension $n=2$ when
    $U=[-1,1]$, $B$ is a column and $A$ has real eigenvalues
    is decidable.
\end{proposition}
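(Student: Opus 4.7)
The approach is to reduce to \Cref{prop:decide_real_mat_uncond} by putting $A$ into real Jordan canonical form via an effective change of basis. In dimension $2$ with a real spectrum, only two such forms arise: a real diagonal matrix, or a non-diagonalizable $2\times 2$ Jordan block. Each matches a bullet of \Cref{prop:decide_real_mat_uncond}.

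First, I compute a real invertible matrix $P$ with algebraic entries such that $J := PAP^{-1}$ is in real Jordan form. This is effective: the two eigenvalues of $A$ are real roots of a rational degree-$2$ polynomial (and, in the non-semisimple case, are in fact rational, since a double root of a quadratic with rational coefficients equals minus half the trace). Real eigenvectors, and hence $P$, can then be constructed from them. Since $e^{At} = P^{-1}e^{Jt}P$, one has
\[
\Reach(A,b,[-1,1]) = P^{-1}\Reach(J, Pb, [-1,1]),
\]
so deciding $y \in \Reach(A,b,[-1,1])$ is equivalent to deciding $Py \in \Reach(J, Pb, [-1,1])$. All quantities involved are algebraic.

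Next, I split on the shape of $J$. If $J = \diag(\lambda_1, \lambda_2)$ is real diagonal, then $Pb$ is a column with at most $2$ entries (hence at most $2$ nonzero entries), so the first bullet of \Cref{prop:decide_real_mat_uncond} applies and produces a formula for membership in $\Reach(J, Pb, [-1,1])$ expressible in $\RRtarski$. If $J$ is a non-diagonal real Jordan block, then it has a single repeated real eigenvalue, and $Pb$ is a column, so the third bullet of \Cref{prop:decide_real_mat_uncond} applies. In either case, \Cref{th:fo_reals} yields decidability.

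The main subtlety to check is that $P$, $Pb$, and $Py$ generally have algebraic rather than rational entries, so one must verify that \Cref{prop:decide_real_mat_uncond} remains applicable with algebraic inputs. This is not a real obstacle: the proof of that proposition produces formulas in $\RRtarski$, into which algebraic parameters are introduced by adjoining auxiliary existentially quantified variables constrained by their minimal polynomials, as noted following \Cref{th:fo_reals}. Beyond this bookkeeping there is no further difficulty — in dimension $2$ with real spectrum, the problem collapses entirely onto the two subcases already handled by \Cref{prop:decide_real_mat_uncond} once $A$ has been canonicalized.
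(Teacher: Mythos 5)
Your proof is correct and follows essentially the same route as the paper: reduce to \Cref{prop:decide_real_mat_uncond} by diagonalizing when possible (so that $Pb$ has at most two nonzero entries, hitting the first bullet) and otherwise observing that $A$ has a single real eigenvalue (hitting the third bullet). The only cosmetic difference is that the paper invokes the third bullet directly without first passing to Jordan form, since that bullet places no canonical-form requirement on $A$.
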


Finally, another way to avoid the use of unbounded sine and cosine is to consider the Bounded Time LTI
Reachability problem, which is also very natural in control theory.

\begin{proposition}[\apxref{\Cref{apx:decide_bounded_time_rexpsin}}{See full version}]\label{prop:decide_bounded_time_rexpsin}
    The Bounded Time LTI Reachability problem when $U=[-1,1]^m$ and the time bound is algebraic
    reduces to deciding $\RRexpsin$. In particular, it is decidable if Schanuel’s conjecture is true.
\end{proposition}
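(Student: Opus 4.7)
The plan is to construct, from the LTI data $(A, B, T, y)$, an $\RRexpsin$ sentence that holds iff $y \in \Reach_T(A, B, [-1,1]^m)$; decidability then follows from \Cref{th:wilkie-macintyre}. First I would apply \Cref{prop:reduce_controllable_minkowski}, which is applicable since $\tau = T < \infty$, to reduce to a Minkowski sum $\sum_i P_i \Reach_T(C_i, b_i, [-1,1])$ with each $(C_i, b_i)$ controllable; membership in the sum is then expressed by existentially quantifying over summands $y_i$ together with the linear constraint $y = \sum_i P_i y_i$. It therefore suffices to express $y \in \Reach_T(C, b, [-1,1])$ in $\RRexpsin$ for a single controllable pair $(C, b)$.

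For such a pair, $\Reach_T(C, b, [-1,1])$ is a compact convex subset of $\RR^n$: compactness follows from continuity of $(T', u) \mapsto \int_0^{T'} e^{Cs} b\, u(s) \dd s$ on $[0,T] \times L^\infty([0,T],[-1,1])$ with the weak-$*$ topology, and convexity is standard. Membership in this closed convex set is then captured by its support function: $y \in \Reach_T(C,b,[-1,1])$ iff $\forall c \in \RR^n,\ c^T y \leq h(c)$, where $h(c) = \int_0^T \abs{c^T e^{Ct} b}\,\dd t$. Indeed, the pointwise-optimal control is $u(t) = \sgn(c^T e^{Ct} b)$, and the supremum over end-times $T' \le T$ is attained at $T'=T$ since the integrand is nonnegative. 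To express $h(c)$, set $g_c(t) = c^T e^{Ct} b$ and split the integral at the zeros $z_1 < \cdots < z_k$ of $g_c$ inside $(0, T)$: this gives $h(c) = \sum_{i=0}^k \abs{G(z_{i+1}) - G(z_i)}$ with $z_0 = 0$, $z_{k+1} = T$, and $G$ an elementary primitive of $g_c$ expressible in $t$, $e^{\alpha_j t}$, $\cos(\omega_j t)$, $\sin(\omega_j t)$ for the eigenvalues $\alpha_j \pm i\omega_j$ of $C$. Since $t \in [0,T]$ with $T$ algebraic, the arguments of $\sin$ and $\cos$ lie in a bounded interval of computable length, so $G$ is definable in $\RRexpsin$.

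The main obstacle, as I see it, is producing a computable uniform bound $N$ on the number $k$ of zeros of $g_c$ on $[0,T]$ as $c$ varies, since without such a bound the number of existential variables $z_1,\ldots,z_k$ in the formula cannot be fixed. I would establish such a bound either via Khovanskii's theorem applied to the Pfaffian family $\{g_c\}_c$, whose complexity is controlled by $C, b, T$ and not by $c$, or by a direct Rolle-style induction giving an explicit $N$ polynomial in $n$ and $T\max_j \abs{\omega_j}$. With $N$ fixed, the final $\RRexpsin$ sentence for $y \in \Reach_T(C,b,[-1,1])$ is a universal closure over $c \neq 0$ of a disjunction over $k \in \{0,1,\ldots,N\}$; each disjunct existentially quantifies over $z_1 < \cdots < z_k \in (0, T)$, imposes $g_c(z_i) = 0$ for each $i$ together with the bounded universal statement $\forall t \in (0,T),\ g_c(t) = 0 \Rightarrow \bigvee_i t = z_i$, and then asserts $c^T y \le \sum_{i=0}^k \abs{G(z_{i+1}) - G(z_i)}$. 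All transcendental evaluations occur on bounded intervals, so the whole formula is in $\RRexpsin$, completing the reduction.
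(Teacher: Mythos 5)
Your proposal is correct and the overall skeleton---Minkowski decomposition via \Cref{prop:reduce_controllable_minkowski}, splitting the integral at the sign changes of $g_c(t)=c^Te^{Ct}b$, bounding the number of such zeros uniformly in $c$, and observing that all sine/cosine arguments are confined to a bounded interval since $t\in[0,T]$---matches the paper's. The one genuine point of divergence is how you pass from the integral description to a formula: the paper parametrizes the boundary $\partial\Reach_T(C,b,[-1,1])=\{\beta_c : c\neq 0\}$ by a maximizer argument and then invokes the Krein--Milman + Carath\'eodory machinery already packaged inside \Cref{prop:reduce_controllable_minkowski} to turn a boundary formula into a membership formula, whereas you bypass the boundary entirely and directly encode membership in the compact convex set via its support function, $y\in\Reach_T$ iff $\forall c.\,c^Ty\leqslant h(c)$. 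Your route is somewhat more economical (no Carath\'eodory detour, and it dispenses with the tangential/nontangential distinction since any tangential $z_i$ in the enumeration contributes trivially to $\sum_i\lvert G(z_{i+1})-G(z_i)\rvert$), at the cost of a $\forall\exists$ alternation where the paper's formula is predominantly existential. On the one open item you flag---a computable uniform bound on the number of zeros of $g_c$ on $[0,T]$ as $c$ varies---the paper resolves it by citing a theorem of Tijdeman~\cite{TIJDEMAN19711}: for an exponential polynomial $\sum_k P_k(z)e^{\lambda_k z}$ with $\deg P_k=\rho_k-1$, the number of zeros in any complex ball of radius $R$ is at most $3(n_0-1)+4R\Delta$ with $n_0=\sum_k\rho_k$ and $\Delta=\max_k\lvert\lambda_k\rvert$, a bound manifestly independent of the coefficients and hence of $c$; your Rolle-induction or Khovanskii proposal would yield something equivalent but less directly.
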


\section{Hardness}\label{sec:hardness}

We saw in the previous section that the LTI Reachability problem seems very challenging, requiring
powerful tools like the first-order theory of the reals with exponential and Schanuel’s conjecture.
In this section, we give some evidence that the problem is indeed difficult. Our first observation
is that, in some sense, the LTI Set Reachability problem trivially contains the Skolem problem
when the input set is $\set{0}$, in other words, when there is no input.

\begin{theorem}[\apxref{\Cref{apx:lti_hard_skolem}}{See full version}]\label{th:lti_hard_skolem}
    The Continuous Skolem problem reduces to the LTI Set Reachability problem
    with input set $U=\set{z}$, where $z=(1,\ldots,1)$, the matrix $A$ is stable
    and the target set is a compact convex set of dimension $n-1$.
\end{theorem}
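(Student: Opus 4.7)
The plan is to reduce the Continuous Skolem problem to the LTI Set Reachability problem by realizing the orbit $e^{At}b$ as the trajectory of an LTI system driven by a constant control, and then clipping an appropriate hyperplane to a ball in order to obtain a compact target. Given a Skolem instance $(A,b,c)$, I first perform a stability reduction: replace $A$ by $A'=A-\alpha I$ for any rational $\alpha$ strictly larger than the maximum real part of the eigenvalues of $A$ (this is computable). Since $\alpha I$ commutes with $A$, we have $c^{T}e^{A't}b=e^{-\alpha t}c^{T}e^{At}b$, so the zero set on $[0,\infty)$ is preserved. Hence without loss of generality the Skolem instance already has a stable matrix $A$, which is therefore invertible.

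Next I build the LTI system. Keep the dimension $n$, set $A'=A$, take $B'\in\RR^{n\times 1}$ to be the column $Ab$, and let $U=\set{z}$ with $z=(1)$ (if the statement requires $z=(1,\dots,1)$ with $m>1$ entries, we trivially pad $B'$ by zero columns). Starting from $x(0)=\boldsymbol{0}$ under the constant control, the solution computes explicitly:
\begin{equation*}
    x(t)=\int_{0}^{t}e^{A(t-s)}Ab\,\dd s=e^{At}b-b.
\end{equation*}
Therefore $c^{T}x(t)+c^{T}b=c^{T}e^{At}b$, so the Skolem instance has a zero at time $t$ if and only if $x(t)$ belongs to the hyperplane $H=\set{y\in\RR^{n}:c^{T}y=-c^{T}b}$.

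To turn $H$ into a legal target, I exploit stability of $A$: standard bounds on the matrix exponential of a stable matrix with algebraic entries yield a computable $M>0$ with $\twonorm{e^{At}b}\leqslant M$ for all $t\geqslant 0$, so the entire trajectory satisfies $x(t)+b\in\overline{B(\boldsymbol{0},M)}$, i.e. $x(t)\in\overline{B(-b,M)}$. Define
\begin{equation*}
    \Target=H\cap\overline{B(-b,M)}.
\end{equation*}
Since $c^{T}(-b)=-c^{T}b$, the centre $-b$ lies on $H$, so $\Target$ is a closed $(n-1)$-dimensional Euclidean ball, which is compact, convex, and of affine dimension exactly $n-1$. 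By the containment argument, $x(t)\in\Target\Longleftrightarrow x(t)\in H\Longleftrightarrow c^{T}e^{At}b=0$, so the constructed LTI system reaches $\Target$ if and only if the Skolem instance admits a zero.

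The only delicate point is complying with the form of the target demanded by the statement: a plain hyperplane would make the argument immediate but would not be compact. The stability of $A$ is what makes the trajectory stay inside a computable ball, so that intersecting with that ball does not lose any reachable hyperplane point. Aside from this clipping step, the reduction is essentially a direct translation between the exponential-polynomial formulation of Skolem and the constant-control LTI trajectory.
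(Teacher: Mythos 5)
Your proof is correct and follows essentially the same approach as the paper's: both build the constant-control LTI whose trajectory from the origin is $e^{At}b-b$, and both translate the Skolem zero condition $c^Te^{At}b=0$ into membership of $x(t)$ in an affine constraint set. The paper's written proof uses the half-space $Y=\{x:c^Tx\leqslant -c^Tb\}$ together with the normalization $c^Tb\geqslant 0$ and an intermediate-value argument, and then only remarks at the end that stability of $A$ and compactness of the target ``can easily be obtained as in Theorem~\ref{th:lti_nontangen_hard_skolem}.'' You instead use the exact hyperplane $H=\{x:c^Tx=-c^Tb\}$ (which removes the need for the sign normalization and the continuity argument) and carry out the stability shift $A\mapsto A-\alpha I$ and the clipping of $H$ to a computable ball explicitly, so your writeup actually delivers the stated form of the target directly rather than by remark. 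These are small presentational differences; the mathematical content is the same.
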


However, we are not really satisfied with this hardness result. Indeed, the problem is fundamentally
different when $U$ is a singleton. To see that, observe that if $U=\set{z}$ for some $z\in\RR^n$,
then the reachable set is just the orbit of $z$ under $x'=Ax$. In particular, we can see that if $A$
is stable then the orbit is a \emph{closed set} minus\footnote{The orbit converges
to $0$ but never reaches it, hence the reachable set is the closure (closed set) minus $0$.} an algebraic point (0). In particular,
we can trivially decide whether an algebraic point is $\boldsymbol{0}$ or not, so deciding reachability is
really about \textbf{deciding membership in a closed set}.
Now compare that with the situation when $U=[-1,1]^n$: by \Cref{prop:null_controllable_region_desc},
when $A$ is stable, the reachable set is \textbf{open}. This topological difference can lead to some
difficulty because deciding membership in the boundary may involve some difficult transcendence results.
For this reason, it is important to study hardness when $U$ is not a singleton.

We show that the problem remains hard when $U=[-1,1]$, by reducing to the Continuous Nontangential Skolem problem.
Recall that this problem, asks whether an exponential polynomial
(or equivalently a linear differential equation) has a zero-crossing (nontangential zero). We argued
in \Cref{sec:skolem} that this problem is essentially as hard as the Skolem problem.

\begin{theorem}\label{th:lti_nontangen_hard_skolem}
    The Continuous Nontangential Skolem problem reduces to the LTI Set Reachability
    problem with a single saturated input, \emph{i.e.} $x'=Ax+bu$ with $A$ stable,
    $b\in\RR^{n}$ and $u(t)\in[-1,1]$,
    and the target set can be chosen to be either a hyperplane, or a convex compact set of dimension
    $n-1$.
\end{theorem}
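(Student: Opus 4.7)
The plan is to reduce Continuous Nontangential Skolem to LTI Set Reachability with saturated input by designing a target hyperplane whose intersection with the reachable set detects a zero-crossing of $f(t) := c^Te^{At}b$, exploiting the fact that the support function of $\Reach$ has a strict gap over $\abs{c^TA^{-1}b}$ exactly when $f$ changes sign.

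\textbf{Preprocessing.} Given an NTSkolem instance $(A,c,b)$, I would first test the degenerate case $f \equiv 0$, which is decidable via $c^TA^kb=0$ for $k=0,\ldots,n-1$ (Cayley--Hamilton), and map it to a trivial ``no'' LTI instance. I would then shift $A$ to $A-\lambda I$ for a large $\lambda \in \NN$ to make the matrix stable. This multiplies $f$ by the strictly positive factor $e^{-\lambda t}$, and a direct computation shows that at any zero $t_0$ of $f$ the derivative of the shifted function is $e^{-\lambda t_0}f'(t_0)$, so both the zeros of $f$ and the tangential/nontangential distinction are preserved.

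\textbf{Key identity.} For the LTI $x'=Ax+bu$ with $u(t)\in[-1,1]$, $x(0)=\mathbf{0}$, and $A$ stable, \Cref{prop:null_controllable_region_desc} gives that $\Reach$ is open, bounded and convex, and $\Reach=-\Reach$ since $U=-U$. The support function satisfies
\[
    h(c) := \sup_{x\in\Reach}c^Tx = \int_0^\infty \abs{f(t)}\dd t,
\]
the extremal control being $u(s)=\sgn f(T-s)$ in the limit $T\to\infty$. Since $\int_0^\infty e^{At}\dd t = -A^{-1}$ for stable $A$, we also have $\abs{c^TA^{-1}b}=\abs*{\int_0^\infty f(t)\dd t}$. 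The triangle inequality yields $h(c)\geqslant \abs{c^TA^{-1}b}$, with equality iff $f$ is of constant sign almost everywhere on $(0,\infty)$, i.e.\ iff $f$ has no zero-crossing.

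\textbf{Reduction.} Take the target hyperplane $H:=\set{x:c^Tx=\alpha}$ with $\alpha:=\abs{c^TA^{-1}b}$, an effectively computable algebraic number. Since $\Reach$ is open, convex, symmetric and contains $\mathbf{0}$, a short convexity argument shows $H\cap\Reach\neq\emptyset$ iff $\alpha<h(c)$: if $\alpha<h(c)$, pick $x\in\Reach$ with $c^Tx>\alpha$ and interpolate linearly with $\mathbf{0}$; conversely, openness of $\Reach$ forces $c^Tx<h(c)\leqslant\alpha$ for all $x\in\Reach$ when $\alpha\geqslant h(c)$. Combining with the previous step, $H\cap\Reach\neq\emptyset$ iff $f$ has a zero-crossing. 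For the compact convex target variant, intersect $H$ with a box $[-R,R]^n$ where $R\geqslant \twonorm{b}\int_0^\infty \twonorm{e^{At}}\dd t$ is a computable bound on $\Reach$ (finite by stability, via Jordan-form estimates); the resulting set is compact, convex and of dimension $n-1$, and contains $H\cap\Reach$, so the equivalence persists.

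\textbf{Main obstacle.} The delicate point is the distinction between a zero-crossing of $f$ (odd-multiplicity zero, which is what the above criterion detects) and the formally-defined NTSkolem condition of a \emph{simple} zero ($f(t)=0\wedge f'(t)\neq 0$): these coincide for analytic functions whose zeros are all of multiplicity one, but differ on degenerate instances such as $f(t)=(t-1)^3$. For generic instances---and in particular for the instances produced by the Diophantine-hardness construction of \cite{ChonevOW15}, all of whose zeros are simple---the two notions agree, so the reduction transfers the Diophantine hardness to LTI Set Reachability. A fully uniform reduction for arbitrary NTSkolem instances would require an additional step reducing NTSkolem to its simple-zero variant, analogous to passing from a polynomial $p$ to $p/\gcd(p,p')$.
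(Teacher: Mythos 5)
Your proposal takes essentially the same route as the paper: stabilize $A$, characterize the reachable set via the support function $h(c)=\int_0^\infty\abs{f(t)}\,\dd t$ (which the paper establishes through its claims about $\beta_c$ being the unique extremal point in direction $c$), observe that $\abs{c^TA^{-1}b}=\abs*{\int_0^\infty f(t)\,\dd t}\leqslant h(c)$ with equality exactly when $f$ has constant sign on $(0,\infty)$, and take the hyperplane $\set{x:c^Tx=\pm\, c^TA^{-1}b}$ as the target, compactified by intersecting with a box. The short convexity argument you give in place of the paper's explicit claims (a)--(f) is sound.

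Two remarks. First, the subtlety you flag under ``Main obstacle'' --- that the criterion actually tested is ``$f$ changes sign'' (odd-multiplicity zero), not ``$f$ has a simple zero'' ($f(t)=0\wedge f'(t)\neq0$, the paper's formal definition of a nontangential zero) --- is a genuine observation, and it applies equally to the paper's own proof: claim (d) there is phrased as ``$-A^{-1}b=\beta_c$ iff $f_c$ has no zero-crossings (nontangential zeros)'', but the argument given establishes equivalence with the absence of a \emph{sign change}, not of a \emph{simple} zero (consider $f(t)=e^{-\lambda t}(t-1)^3$, realizable with a $4\times4$ Jordan block and a controllable pair). So both proofs really reduce the sign-change variant, and the theorem as literally stated is slightly stronger than what either argument yields for arbitrary instances; the Diophantine hardness still transfers, as you note, because the hard instances of \cite{ChonevOW15} have only simple zeros, where the two notions coincide. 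Second, a small omission in your write-up: you invoke openness of $\Reach$ through \Cref{prop:null_controllable_region_desc}, but that proposition assumes $(A,b)$ controllable, so you should first reduce to this case, as the paper does by restricting to $V=\Span[b,Ab,\ldots,A^{n-1}b]$ and working with $(A_V,b_V)$; the degenerate direction $c\perp V$ is exactly your excluded case $f\equiv 0$.
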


\begin{proof}
    Let $c,A,b$ be an instance of the Continuous Nontangential Skolem problem. Let
    $f_c(t)=c^Te^{At}b$, the problem asks whether $f_c$ has any zero-crossing at $t\geqslant 0$.
    Note that for any $\alpha>0$, $f_c(t)$ is zero-crossing if and only if $e^{-\alpha t}f_c(t)$
    is zero-crossing. Furthermore, $e^{-\alpha t}f_c(t)$ is still an exponential polynomial, so
    without loss of generality we can assume that all eigenvalues of $A$ have negative real parts,
    by taking $\alpha$ sufficiently large. In other words, we can assume that $A$ is stable.
    In particular, $A$ must be invertible.

    We now show that we can assume that $(A,b)$ is controllable.
    Let $V=\Span[b,Ab,\ldots,A^{n-1}b]$ where $n$ is the dimension of $A$, and assume that $\dim V<n$.
    Then $b\in V$ and $AV\subseteq V$ by Cayley–Hamilton theorem, so by an orthogonal change of basis $P$,
    \[
        P^{-1}c=\begin{bmatrix}c_V\\ *\end{bmatrix},\qquad
        P^{-1}AP=\begin{bmatrix}A_V&*\\0&*\end{bmatrix},\qquad
        P^{-1}b=\begin{bmatrix}b_V\\0\end{bmatrix}.
    \]
    It then follows that $c^Te^{At}b=c_Ve^{A_Vt}b_V$, but note that
    \[
        \Span[b_V,A_Vb_V,\ldots,A_V^{k-1}b_V]=V
    \]
    by construction, therefore $(A_V,b_V)$ is controllable.

    We can now assume that $A$ is stable and $(A,b)$ controllable. Without loss of generality,
    we can assume that $f_c(0)=c^Tb\geqslant 0$, by considering $-c$ instead of $c$ if this is not the case.
    Also recall that $\Reach:=\Reach(A,b,[-1,1])$.
    Therefore, by \Cref{thm:bdry_descrptn},
    $\Reach(A,b,U)$ is an open convex set containing $0$ and its boundary
    is given by $\partial\Reach(A,b,U) = \set*{\beta_v: v \in \RR^n\setminus \{\mathbf{0}\}}$,
    where
    \[
        \beta_v:=\int_0^{\infty}e^{A t}b\sgn(v^Te^{A t}b)\dd t
    \]
    which is a strictly convex set. We start by claiming the following:
    \begin{enumerate}[label=\textbf{(\alph*)}]
        \item\label{claim1} $x\in\overline{\Reach}$ if and only if $x=\int_0^\infty e^{At}bu(t)\dd t$
            for some $u:\RR\to[-1,1]$,
        \item\label{claim2} $\beta_c$ is the unique extermal point in direction $c$:
            $\overline{\Reach}\cap(\beta_c+H_c)=\set{\beta_c}$ where $H_c=\set{x\in\RR^n:c^Tx=0}$
            is the hyperplane of normal $c$,
        \item\label{claim3} $-A^{-1}b\in\overline{\Reach}$,
        \item\label{claim4} $-A^{-1}b=\beta_c$ if and only if $f_c$ has no zero-crossings (nontangential zeros),
        \item\label{claim5} $-A^{-1}b\neq\beta_{-c}$,
        \item\label{claim6} $-A^{-1}b=\beta_c$ if and only if $(-A^{-1}b+H_c)\cap\Reach=\varnothing$ 
    \end{enumerate}
    We now go through those claims one by one: claim~\ref{claim1} is direct consequence of \eqref{eq:sol_lti}
    with the change of variable $\xi=t-s$. Claim~\ref{claim2} is essentially already proven in \cite{HuLQ02}
    but we give the gist of the proof. Pick $x\in\overline{\Reach}$, then by claim~\ref{claim1}
    there exists $u:\RR\to[-1,1]$ such that $x=\int_0^{\infty}e^{At}bu(t)\dd t$. Then check that, since $|u(t)|\leqslant1$,
    \begin{align*}
        c^Tx
            &=\int_0^{\infty}c^Te^{At}bu(t)\dd t
            \leqslant \int_0^{\infty}|c^Te^{At}b|\dd t\\
            &=\int_0^{\infty}c^Te^{At}b\sgn(c^Te^{At}b)\dd t
            =c^T\beta_c.
    \end{align*}
    Therefore, $\beta_c$ is a maximizer in direction $c$. But by strict convexity of $\overline{\Reach}$,
    there can only be a unique one, for otherwise a whole line segment would be in $\partial\Reach$
    which would be a contradiction. To show claim~\ref{claim3}, simply observe that since $e^{At}\to0$
    as $t\to\infty$, we have, by claim~\ref{claim1},
    \[
        -A^{-1}b
            =\int_0^\infty e^{At}b\dd t=\int_0^\infty e^{At}bu(t)\dd t\in\overline{\Reach}
            \qquad\text{where }u(t)=1.
    \]
    Claim~\ref{claim4} follows from the computation above and the remark that
    \begin{align*}
        c^T(\beta_c+A^{-1}b)
            &=\int_0^\infty c^Te^{At}b\big(\sgn(c^Te^{At}b)-1\big)\dd t\\
            &=\int_0^\infty |f_c(t)|-f_c(t)\dd t.
    \end{align*}
    Indeed, there are two cases (since we assumed that $f_c(0)\geqslant0$):
    \begin{itemize}
        \item Either $f_c(t)\geqslant 0$ for all $t\geqslant 0$, then $f_c$ has no zero-crossings and $c^T(\beta_c+A^{-1})b=0$.
            But $-A^{-1}b\in\overline{\Reach}$ and $\beta_c$ is the maximizer in direction $c$,
            so by strict convexity, $\beta_c=-A^{-1}b$.
        \item Either $f_c$ has at least one zero-crossing, then by continuity there exists an open
            interval $(a,b)$ such that $|f_c(t)|-f_c(t)>0$. But since $|f_c(t)|-f_c(t)\geqslant 0$
            for all $t$, it follows that $c^T(\beta_c+A^{-1}b)>0$ hence $\beta_c\neq-A^{-1}b$.
    \end{itemize}
    Claim~\ref{claim5} follows from a similar argument since $\beta_{-c}=-\beta_c$ and
    \begin{align*}
        c^T(\beta_c-A^{-1}b)
            &=\int_0^\infty c^Te^{At}b\big(\sgn(c^Te^{At}b)+1\big)\dd t\\
            &=\int_0^\infty |f_c(t)|+f_c(t)\dd t.
    \end{align*}
    But now assume, for contradiction, that $\beta_{-c}=-A^{-1}b$. Then $c^T(\beta_c-A^{-1}b)=0$
    so $f_c(t)\leqslant 0$ for all $t$, hence $f_c$ has no zero-crossings. But then $-A^{-1}b=\beta_c$
    by claim~\ref{claim4}. This implies that $c^T(\beta_c+A^{-1}b)$ and $f_c(t)\geqslant 0$ by the
    computation of claim~\ref{claim4}. Therefore, $f_c\equiv0$ and $(A,b)$ is not controllable by
    \Cref{lem:control_fc}, a contradiction.

    Finally, for claim~\ref{claim6}, observe that if $-A^{-1}b\in\Reach$ then $-A^{-1}b\neq\beta_c$
    since $\beta_c\in\partial\Reach$ and $\Reach$ is open, so the result is true.
    Otherwise, $-A^{-1}b\in\partial\Reach$ by claim~\ref{claim5}.
    Hence, there are two cases: if $-A^{-1}b=\beta_c$ then the intersection is empty by claim~\ref{claim2}.
    Otherwise, $-A^{-1}b\neq\beta_c$, but $-A^{-1}b\neq\beta_{-c}$ by claim~\ref{claim5} so it must be
    the case that $-A^{-1}b+H_c$ interesects $\Reach$. To show this formally, observe that
    by strict convexity $(\beta_{-c},\beta_c)\subseteq\Reach$. Also, since $\beta_c$
    is the maximizer in direction $c$, $-A^{-1}b\neq\beta_c$ implies that $c^Tx<c^T\beta_c$ where $x=-A^{-1}b$.
    A similar reasoning for $\beta_{-c}$ shows that $c^T\beta_{-c}<c^Tx<c^T\beta_c$. Define
    $y=\beta_{-c}+\alpha(\beta_c-\beta_{-c})$ where $\alpha=\tfrac{c^Tx-c^T\beta_{-c}}{c^T\beta_c-c^T\beta_{-c}}$.
    Then $\alpha\in(0,1)$ by the previous inequality, hence $y\in(\beta_{-c},\beta_c)\subseteq\Reach$.
    But by construction $c^Ty=c^T\beta_{-c}+\alpha c^T(\beta_c-\beta_{-c})=c^Tx$, therefore $y\in x+H_c$.

    We can now describe an algorithm that solves this instance of Continuous Nontangential Skolem problem
    by reducing to the LTI Set Reachability problem. Consider the LTI with matrix $A$, control matrix $b$,
    input set $[-1,1]$ and target set $Y=\set{x\in\RR^n:c^Tx=-c^TA^{-1}b}$. Note that $Y$ is a hyperplane
    with algebraic coefficients, and check that $Y=-A^{-1}b+H_c$. Let $\Reach$ be the reachable set of
    this LTI, then it follows by claims~\ref{claim3}~and~\ref{claim6}
    above that $Y\cap\Reach=\varnothing$ if and only if $-A^{-1}b=\beta_c$ if and only if
    $f_c$ has no zero-crossings. Hence, $Y$ is reachable if and only if the Continuous Nontangential Skolem
    problem instance has a zero-crossing.

    The set $Y$ above is convex but not compact, but since we have chosen $A$ to be stable, the reachable
    set is bounded by \Cref{prop:null_controllable_region_desc} and it is easy to compute a
    bound $M$ such that $\Reach\subseteq[-M,M]^n$. Then we can define $\hat{Y}=Y\cap[-M,M]^n$
    which is now compact convex, and clearly $\Reach\cap Y=\varnothing$ if and only if
    $\Reach\cap\hat{Y}=\varnothing$.
\end{proof}

\begin{acks}
    We thank James Worrell for useful discussions on the paper.
\end{acks}

\bibliographystyle{ACM-Reference-Format}
\bibliography{biblio}

\ifthenelse{\boolean{fullversion}}{
    \appendix
    \section{Additional preliminaries}\label{apx:prelim}

\subsection{Jordan decomposition and matrix exponential}

Given a square matrix $A$ of order $n$ with rational entries,
one can find matrices $P$ and $\Lambda$ (possibly with complex algebraic entries)
such that $A = P \Lambda P^{-1}$.
Here $\Lambda$ is a block diagonal matrix $\diag(J_1,J_2,\dots,J_m)$ where
the $J_i$ are matrices of a special form (given below) known as the Jordan blocks.
A particular application of Jordan decomposition is to compute the exponential of a matrix.
From the above definition, it is clear that if $A = P \Lambda P^{-1}$, then $e^{At} = P e^{\Lambda t} P^{-1}$.
If $\Lambda = \diag(J_1,J_2,\ldots,J_m)$, then it is not hard to see that $e^{\Lambda t} = \diag(e^{J_1 t},e^{J_2 t},\ldots,e^{J_m t})$.
A closed form expression for a Jordan block $e^{J_i t}$ is given by
\[
    e^{J_i t} = e^{\lambda_i t}\begin{bmatrix}
    1 & t & \frac{t^2}{2} & \cdots & \frac{t^{k-1}}{(k-1)!} \\
    0 & 1 & t & \cdots & \frac{t^{k-2}}{(k-2)!} \\
    \vdots & \vdots & \ddots & \ddots & \vdots \\
    0 & 0 & \cdots & 1 & t \\
    0 & 0 & \cdots & 0 & 1
    \end{bmatrix},
    J_i = \begin{bmatrix}
        \lambda_i & 1 & 0 & \cdots & 0 \\
        0 & \lambda_i & 1 & \cdots & 0 \\
        \vdots & \vdots & \ddots & \ddots & \vdots \\
        0 & 0 & \cdots & \lambda_i & 1 \\
        0 & 0 & \cdots & 0 & \lambda_i
        \end{bmatrix}
\]
where $\lambda_i$ is an eigenvalues of $A$ and $k$ is the size of the Jordan block. A consequence
of this normal form is the \emph{real Jordan normal form}: if $A$ is real then its Jordan form
can be nonreal. However, one can allow more general blocks to recover a real representation: a real
Jordan block is either a complex Jordan block with a real $\lambda_i$, or a block matrix of the form
\[
    J_i'=\begin{bmatrix}C_i&I_2&0&\cdots&0\\0&C_i&I_2&\cdots 0\\\vdots&\vdots&\ddots&\ddots&\vdots\\
            0&0&\cdots&C_i&I_2\\0&0&\cdots&0&C_i\end{bmatrix}
    \qquad\text{where}\qquad
    C_i=\begin{bmatrix}a_i&-b_i\\b_i&a_i\end{bmatrix}
\]
where $\lambda_i=a_i+ib_i$. In this case, one can ensure that the transformation matrix $P$ is also real.
This form is particularly useful for simple blocks since the exponential of $C_i$ is a scaling-and-rotate
matrix:
\[
    e^{C_it}=e^{a_it}\begin{bmatrix}\cos(b_it)&\sin(b_it)\\-\sin(b_it)&\cos(b_it)\end{bmatrix}.
\]

\section{Proof of Lemma~\ref{prop:reduce_controllable_minkowski}}\label{apx:reduce_controllable_minkowski}

Following the observations of \Cref{sec:control_theory} and in particular \eqref{eq:decompose_C_hypercube},
we have that $\Reach_\tau(A,B,[-1,1]^m)=\sum_{i=1}^m\Reach_\tau(A,b_i,[-1,1])$. We now focus on the case
where $B=b$ is a column vector. If $(A,b)$ is controllable then there is nothing to do. Otherwise
let $V=\Span(b,Ab,\ldots,A^{n-1}b)$, and assume that $k:=\dim V<n$.
Then $b\in V$ and $AV\subseteq V$, so by a change of basis $P$ sending $V$ to $\RR^k$ we have
\[
    P^{-1}AP=\begin{bmatrix}A_V&0\\0&*\end{bmatrix},\qquad
    P^{-1}b=\begin{bmatrix}b_V\\0\end{bmatrix}.
\]
Then $\Span(b_V,A_Vb_V,\ldots,A_V^{k-1}b_V)=\RR^k$ by construction, therefore $(A_V,b_V)$ is controllable.
Furthermore, it is clear that the spectrum of $A_V$ is included in that of $A$.
On the other hand, for any input $u$,
\begin{align*}
    \int_0^\tau e^{At}bu(t)\dd t
        &=P\int_0^\tau \begin{bmatrix}e^{A_Vt}&*\\0&*\end{bmatrix}\begin{bmatrix}b_V\\0\end{bmatrix}u(t)\dd t\\
        &=P\begin{bmatrix}
            \int_0^\tau e^{A_Vt}b_Vu(t)\dd t\\0
        \end{bmatrix}.
\end{align*}
It follows that
\[
    \Reach_\tau(A,b,[-1,1])=PJ_k\Reach(A_V,b_V,[-1,1])
    \quad\text{where}\quad
    J_k=\begin{bmatrix}I_k\\0\end{bmatrix}\in\RR^{n\times k}.
\]
Going back to the general case, we now have that
\[
    \Reach_\tau(A,B,[-1,1]^m)=\sum_{i=1}^mP_i\Reach_\tau(C_i,b_i,[-1,1]).
\]
Assume there are formulas $\Phi_1,\ldots,\Phi_m$ in some theory $\RRtheory$
to expressible membership in $\Reach_\tau(C_i,b_i,[-1,1])$. Then we can express membership of some $y\in\RR^n$
in $\Reach_\tau(A,B,[-1,1]^m)$ by the formula
\[
    \Phi(y):=\exists z_1,\ldots,\exists z_k.\,\Phi_1(z_1)\wedge\cdots\wedge\Phi_m(z_m)\wedge y=P_1z_1+\cdots+P_mz_m.
\]
Clearly if the theory $\RRtheory$ contains $\RRtarski$ then $\Phi$ is in $\RRtheory$. If instead of a
formula $\Phi_i$ for $\Reach_\tau(C_i,b_i,[-1,1])$, we have a formula for its boundary $\partial\Reach_\tau(C_i,b_i,[-1,1])$,
then we note that $\Reach_\tau(C_i,b_i,[-1,1])$ is open convex by \Cref{prop:null_controllable_region_desc}
when $\tau=\infty$ and convex closed when $\tau<\infty$,
hence we can write a formula for $\Reach(C_i,b_i,[-1,1])$ from $\Phi_i$, as shown below.

Let $C\subseteq\RR^n$ be an open (resp. closed) bounded convex set, if we have a formula $\Phi$ to express
membership in $\partial C$ in some theory $\RRtheory$ that subsumes $\RRtarski$, then we can write
a formula in $\RRtheory$ to express membership in $C$.
Indeed, by Krein–Milman theorem, the closure $\overline{C}$ of $C$ is the convex hull of its extreme points,
but the extreme points of $\overline{C}$ are on the boundary $\partial C$. Hence $\overline{C}$
is the convex hull of $\partial C$. It follows by Carathéodory's theorem that any point in $\overline{C}$
is the convex combination of at most $n+1$ in $\partial C$. Hence we can write a formula $\psi$ to
decide membership in $\overline{C}$. If $C$ is closed then $C=\overline{C}$ so we are done.
If $C$ is open, we know that $C=\overline{C}\setminus\partial C$,
hence we can write a formula for $C$.

\section{Proof of Proposition~\ref{prop:complement_border_reach_lti_approx}}\label{apx:complement_border_reach_lti_approx}

We first observe that we can reduce to the case where $B=b$ is a column vector and $(A,b)$
is controllable. Indeed, apply \Cref{prop:reduce_controllable_minkowski} to get computable $C_1,\ldots,C_k$
and $P_1,\ldots,P_k$ such that
\[
    \Reach(A,B,[-1,1]^m)=\sum_{i=1}^mP_i\Reach(C_i,b_i,[-1,1]).
\]
Now assume that we have some convex under/over-approximation $Q_i^-,Q_i^+$ of
$\Reach(C_i,b_i,[-1,1])$ for all $i$. Then $\sum_{i=1}^mP_iQ_i^\pm$ is a convex under/over-approximation
by the property of the Minkowski sum of convex sets, furthermore this sum can be computed effectively.
Note that in this reduction, the matrix $A$ has not changed, hence it is still stable.

We now focus on the case where $B=b$ is a column vector such that $(A,b)$ is controllable.
We can apply \Cref{thm:bdry_descrptn} to get that $\Control(A,b,U)$ is an open convex set
containing $\boldsymbol{0}$ and
\[
    \partial\Reach(A,b,U) = \set*{\int_0^{\infty} e^{A t}b\sgn(c^Te^{A t}b)\dd t: c \in \RR^n\setminus \{\mathbf{0}\}}.
\]
Observe that only the direction of $c$ matters so we can restrict the set to the \emph{compact}
subset of $c$ such that $\twonorm{c}=1$.
Let $f_c(t)=e^{A t}b\sgn(c^Te^{A t}b)$ and observe that since $A$ is stable,
$f_c(t)\to0$ as $t\to\infty$. In fact, one can compute constants $D$ and $\alpha<0$ such that
$\|f_c(t)\|\leqslant De^{\alpha t}$ for all $t\geqslant0$.
Let $T$ to be fixed later,
then
\[
    \left\|\int_{0}^\infty f_c(t)\dd t-\int_0^Tf_c(t)\dd t\right\|\leqslant
        \int_T^\infty De^{\alpha t}\dd t=D\alpha^{-1}e^{\alpha T}.
\]
Furthermore, one can approximate $\int_0^Tf_c(t)\dd t$ with arbitrary precision given $T$ and $c$,
by using the fact that $c^Te^{At}b$ is analytical and computable. Furthermore, $c\mapsto \int_0^Tf_c(t)\dd t$
is continuous since the zero-crossings of $c^Te^{At}B$ move continuously with $c$ and the
(discontinous) tangential zeros that can appear do not change the integral. It follows that
on the compact set $\set{c:\twonorm{c}=1}$, it has bounded variations, with a computable bound.
Putting everything together, this allows us to sample the border with sufficiently many points
as to obtain an underapproximation and overapproximation of the border, in the form of a convex set.

\section{Proof of Proposition~\ref{prop:decide_real_eigen_rexp}}\label{apx:decide_real_eigen_rexp}

By putting $A$ in Jordan Normal Form, write $A=Q^{-1}MQ$ where $Q$ is invertible
and $M$ is made of Jordan blocks. Since $A$ has real spectrum, $Q$ and $M$ are real matrices
and $\Reach(A,B,U)=Q^{-1}\Reach(M,QB,U)$, we can now assume that $A$ only consists of Jordan blocks
since $y\in\Reach(A,B,U)$ if and only if $Qy\in\Reach(M,QB,U)$.

Assume that $A=\diag(J_1,\ldots,J_k)$ consist of Jordan blocks. Without loss of generality, we can assume that
the blocks are ordered by increasing eigenvalue. Hence we can write $A=\diag(A_1,A_2)$ where $A_1$
contains the nonnegative $\lambda_i$ and $A_2$ the negative ones. Then $A_1$ is weakly-antistable and
$A_2$ is stable. Decompose $B$ into $B_1$ and $B_2$ accordingly. Then by \Cref{prop:null_controllable_region_desc},
$\Reach(A,B,U)=\RR^{n_1}\times \Reach(A_2,B_2,U)$. We can then apply \Cref{prop:reduce_controllable_minkowski}
to decompose $\Reach(A_2,B_2,U)$ into smaller controllable problems $(C_i,b_i)$, where each $C_i$
also has a real spectrum. It then suffices to show that membership in $\partial\Reach(C_i,b_i,[-1,1])$
is expressible in $\RRexp$ for each subproblem to conclude by \Cref{prop:reduce_controllable_minkowski}.

Assume that $A$ only has negative eigenvalues, $B=b$ is a column vector, $(A,b)$ is controllable and $U=[-1,1]$.
Then by \Cref{thm:bdry_descrptn} we have that
$\partial\Reach(A,b,[-1,1]) = \set*{\beta_c: c \in \RR^n\setminus \{\mathbf{0}\}}$, where
\[
    \beta_c:=\int_0^{\infty}e^{At}b\sgn(c^Te^{A t}b)\dd t
\]
But observe that $f_c(t):=c^Te^{A t}b$ is an exponential polynomial in $t$. Furthermore,
it has at most $n-1$ zeros by \Cref{lem:control_fc} since $A$ has real eigenvalues and $b,c$
are nonzero. Let $0\leqslant k\leqslant n-1$ and assume that $f_c$ has $k$ nontangential zeros
(the tangential zeros will play no role) $0<t_1<\cdots<t_k$. Then, depending
on the sign of $f_c(0)$, and noting that the sign of $f_c$ changes at each $t_i$, we have
that
\begin{align*}
    \beta_c
        &=\pm\left(\int_0^{t_1}e^{-At}b\dd t+\sum_{i=1}^{k-1}\int_{t_i}^{t_{i+1}}(-1)^i\dd t
            +(-1)^k\int_{t_k}^{\infty}e^{-At}b\dd t\right)\\
        &=\pm A^{-1}\left(I_n+2\sum_{i=1}^k(-1)^ie^{At_i}\right)b\\
        &=\pm R_k(t_1,\ldots,t_k)
\end{align*}
which is also a (vector of) exponential polynomials. We can now
write a formula in $\RRexp$ to express that a target $y$ is on the border:
\begin{align*}
    \Phi(y)
        &:=\exists c.c\neq0\bigwedge\bigvee_{k=0}^d\Phi_k(y,c)
    \intertext{to check for a point on a border in direction $c$,}
    \Phi_k(y,c)
        &:=\exists t_1,\ldots,t_k.\Psi_k(c,t)\bigwedge\Psi_k'(c,t)\bigwedge
            \bigwedge_{j=1}^n\left(y_j=R_{k,j}(t)\right)
    \intertext{to match the target with some parameters $z_1,\ldots,z_k$,}
    \Psi_k(c,t)
        &:=\bigwedge_{i=1}^k \left(f_c(t_i)=0\wedge f_c'(t_i)\neq 0\right)
            \bigwedge 0<t_1<\cdots<t_k
    \intertext{to check that the $t_i$ are zeros of $Q(c,\cdot)$,}
    \Psi_k'(c,t)
        &:=\forall u.(u>0\wedge f_c(u)=0\wedge f_c'(u)\neq 0)\Rightarrow\bigvee_{i=1}^k u=z_i
    \intertext{to check the $t_i$ are the only zeros of $Q(c,\cdot)$.}
\end{align*}
We note that those are indeed formulas in $\RRexp$ because $f_c,f_c'$ and $e^{At}$ are exponential
polynomials in $t$ and $c$ and they can be computed by putting $A$ in Jordan normal form, and all
those exponential polynomials have algebraic coefficients since $b$ and $A$ have algebraic coefficients.
This shows that deciding the border reduces to deciding a formula in $\RRexp$.

\section{Proof of Proposition~\ref{prop:decide_complex_eigen_2d_rexpsin}}\label{apx:decide_complex_eigen_2d_rexpsin}

If $A$ has real eigenvalues, then the decidability reduces to $\RRexp$ by \Cref{prop:decide_real_eigen_rexp},
which is decidable in $\RRexpsin$. Otherwise, since $A$ is real, it must have two conjugate complex
but nonreal eigenvalues. Hence, we can put $A$ in \emph{real} Jordan Form: $A=P^{-1}JP$ where $P$
is real and
\[
    J=\begin{bmatrix}\lambda&\theta\\-\theta&\lambda\end{bmatrix},\qquad
\]
where $\lambda,\theta\in\RR$. It follows that $\Reach(A,B,U)=P^{-1}\Reach(J,PB,U)$
so we now focus on this particular case.
Assume that $A$ is a real Jordan block of the form above. Then
\[
    e^{Jt}=e^{\lambda t}\begin{bmatrix}\cos(\theta t)&\sin(\theta t)\\-\sin(\theta t)&\cos(\theta t)\end{bmatrix}.
\]
Furthermore, we can decompose $B$ into columns vectors $b_1,\ldots,b_m$ so that
\[
    \Reach(A,B,[-1,1]^m)=\Reach(A,b_1,[-1,1])+\cdots+\Reach(A,b_m,[-1,1])
\]
so we now assume that $B=b$ is a column vector. We further reduce to the case where $b$ has
norm $1$ by noticing that $\Reach(A,b_1,[-1,1])=\twonorm{b}\Reach(A,b/\twonorm{b},[-1,1])$.
Now $(A,b)$ is controllable (unless $b=0$ which is trivial)
since $A$ rotates (and rescale) $b$ by an angle $\theta$ and $\theta\neq 0\pmod \pi$ (indeed,
$\theta$ is nonzero and algebraic). Hence we can apply \Cref{prop:null_controllable_region_desc}
and \Cref{thm:bdry_descrptn} to get that either $\Reach(A,b,[-1,1])=\RR^2$ if $\lambda>0$, or
$\partial\Reach(A,B,[-1,1]^m)=\set{\beta_c:c\in\RR^2\setminus\set{0}}$ where
\[
    \beta_c:=\int_0^{\infty}e^{At}b\sgn(c^Te^{A t}b)\dd t.
\]
We now focus on this case since the other one is trivial. Since the dimension is two, we can write
$c_\phi:=\begin{bmatrix}\cos(\phi)&\sin(\phi)\end{bmatrix}^T$ for $\phi\in[0,2\pi)$ and
$b=\begin{bmatrix}\cos\beta&\sin\beta\end{bmatrix}^T$.
Then,
\[
    \begin{bmatrix}\cos(\theta t)&\sin(\theta t)\\-\sin(\theta t)&\cos(\theta t)\end{bmatrix}
        \begin{bmatrix}a\\b\end{bmatrix}
    =\begin{bmatrix}\cos(\theta t+\beta)\\\sin(\theta t+\beta\end{bmatrix}.
\]
and
\begin{align*}
    \sgn(c^Te^{A t}b)
        &=\sgn\left(\begin{bmatrix}\cos(\phi)&\sin(\phi)\end{bmatrix}
        \begin{bmatrix}\cos(\theta t+\beta)\\\sin(\theta t+\beta\end{bmatrix}\right)\\
        &=\sgn(\cos(\theta t+\beta+\phi)).
\end{align*}
Hence,
\begin{align*}
    \beta_\phi
        &:=\beta_{c_\phi}=\int_0^\infty e^{\lambda t}\begin{bmatrix}\cos(\theta t+\beta)\\\sin(\theta t+\beta\end{bmatrix}
        \sgn(\cos(\theta t+\beta+\phi))\dd t\\
        &=e^{i\beta}\int_0^te^{(\lambda+i\theta)t}\sgn(\cos((\theta t+\beta+\phi))\dd t
\end{align*}
when viewed as a complex number (to simplify computations). Let $t_{\phi}$ denote the smallest
$t\geqslant 0$ such that $\cos(\theta t+\beta+\phi)=0$ and $\varepsilon_\phi=\sgn(\cos(\beta+\phi))$,
except when $t_\phi=0$ in which case we let $\varepsilon_\phi=-1$.
Then
\begin{align*}
    \beta_\phi
        &=e^{i\beta}\varepsilon_\phi\left(\int_0^{t_\phi}e^{(\lambda+i\theta )t}\dd t
        -\sum_{k=0}^\infty\int_{t_\phi+k\tfrac{\pi}{\theta}}^{t_\phi+(k+1)\tfrac{\pi}{\theta}}
        (-1)^ke^{(\lambda+i\theta )t}
        \dd t\right)\\
        &=\frac{e^{i\beta}\varepsilon_\phi}{\lambda+i\theta}\Big(e^{(\lambda+i\theta )t_\phi}-1\\
        &\hspace{1.5cm}-\sum_{k=0}^\infty(-1)^k\left(e^{(\lambda+i\theta )(t_\phi+(k+1)\tfrac{\pi}{\theta})}
        -e^{(\lambda+i\theta )(t_\phi+(k+1)\tfrac{\pi}{\theta})}\Big)
        \right)\\
        &=\frac{e^{i\beta}\varepsilon_\phi}{\lambda+i\theta}\left(-1+
        2\sum_{k=0}^\infty(-1)^ke^{(\lambda+i\theta )(t_\phi+k\tfrac{\pi}{\theta})}
        \right)\\
        &=\frac{e^{i\beta}\varepsilon_\phi}{\lambda+i\theta}\left(-1+
        2e^{(\lambda+i\theta)t_\phi}\sum_{k=0}^\infty(-1)^ke^{k(\lambda+i\theta )\tfrac{\pi}{\theta}}
        \right)\\
        &=\frac{e^{i\beta}\varepsilon_\phi}{\lambda+i\theta}\left(-1+
        2e^{(\lambda+i\theta)t_\phi}\frac{1}{1+e^{(\lambda+i\theta)\tfrac{\pi}{\theta}}}\right)\\
        &=\frac{e^{i\beta}\varepsilon_\phi}{\lambda+i\theta}\left(-1+
        2e^{(\lambda+i\theta)t_\phi}\frac{1}{1-e^{\tfrac{\lambda\pi}{\theta}}}\right).
\end{align*}
One can then obtain an expression for each coordinate of $\beta_\phi$ viewed as a 2D vector.
In particular, this expression is expressible in $\RRexpsin$ where $\sin$ is taken over some
bounded interval. Indeed, $\lambda,\beta$ and $\theta$ are algebraic, $t_\phi$ can be express in with an equation
involving a bounded $\sin$ since we have the trivial bound $t_\phi\leqslant \tfrac{2\pi}{\theta}$.
We can then express $e^{(\lambda+i\theta)t_\phi}$ using a combination of exponential and bounded $\sin$,
again noting that $\theta t_\phi\leqslant 2\pi$. We can also express $\pi$ using an equation on
bounded $\sin$ ($\sin{\pi}=0 \wedge (\forall y.\,0<y<\pi \implies \sin{y}\neq 0)$) hence we can define
$e^{\tfrac{\lambda\pi}{\theta}}$. It follows that we can express $\beta_\phi$ and hence the boundary
in $\RRexpsin$. We then conclude using \Cref{prop:reduce_controllable_minkowski}.

\section{Proof of Proposition~\ref{prop:decide_2d_real_eigen}}\label{apx:decide_2d_real_eigen}

If $A$ is diagonalizable, and since it has real eigenvalues, we can write $A=P^{-1}DP$ where $P$ is real and $D$ is real diagonal.
Then $\Reach(A,B,U)=P^{-1}\Reach(D,PB,U)$ so we only need to decide if $Py\in\Reach(D,PB,U)$.
But in dimension $2$ all columns of $PB$ necessarily have at most two nonzero entries
so we can conclude using \Cref{prop:decide_real_mat_uncond}.
Otherwise, $A$ only has one eigenvalue, which is real, so we conclude with \Cref{prop:decide_real_mat_uncond}.

\section{Proof of Proposition~\ref{prop:decide_bounded_time_rexpsin}}\label{apx:decide_bounded_time_rexpsin}

Recall that we are concerned with a time-bounded problem, for some time bound $\tau<\infty$
that is algebraic.
Since $U=[-1,1]^m$, we can apply \Cref{prop:reduce_controllable_minkowski} to 
to decompose $\Reach(A,B,U)$ into smaller controllable problems $(C_i,b_i)$, where each $C_i$
is still a real matrix. It then suffices to show that membership in $\partial\Reach(C_i,b_i,[-1,1])$
is expressible in $\RRexpsin$ for each subproblem to conclude by \Cref{prop:reduce_controllable_minkowski}.

Now consider the case where $B=b$ is a column vector and $(A,b)$ is controllable and $U=[-1,1]$.
Since $U$ is convex closed and $\tau$ is finite, it is clear that $\Reach$ is convex closed.
A simple proof similar to that of \Cref{thm:bdry_descrptn} shows that
$\partial\Reach_\tau(A,B,U)=\set*{\beta_c: c \in \RR^n\setminus \{\mathbf{0}\}}$, where
\[
    \beta_c:=\int_0^{\tau}e^{At}b\sgn(c^Te^{A t}b)\dd t
\]
Indeed, observe that for any $c\in\RR^n\setminus \{\mathbf{0}\}$,
\begin{align*}
    c^T\int_0^{\tau} e^{A t}b\sgn(c^Te^{A t}b)\dd t
        &=\int_0^{\tau} c^Te^{A t}b\sgn(c^Te^{A t}b)\dd t\\
        &=\int_0^{\tau} |c^Te^{A t}b|\dd t
\end{align*}
and for any $u:[0,\tau]\to[-1,1]$ measurable,
\begin{align*}
    c^T\int_0^\tau e^{A t}bu(t)\dd t
        &=\int_0^\tau c^Te^{A t}bu(t)\dd t\\
        &\leqslant\int_0^\tau |c^Te^{A t}bu(t)|\dd t
        \leqslant\int_0^\tau |c^Te^{A t}b|\dd t.
\end{align*}
Hence, $\int_0^{\tau} e^{A t}b\sgn(c^Te^{A t}b)\dd t$ is a maximizer in direction $c$ and we conclude
by standard results about convex sets. But observe that $f_c(t):=c^Te^{-A t}b$ is an exponential polynomial in $t$.
Furthermore, it has a bounded number of zeroes by \Cref{lem:control_fc} since $b,c$ are nonzero.
We can even obtain an explicit formula for this bound by induction on the number of terms, or invoke
a more general result on zeroes in a complex ball by Tijdeman \cite{TIJDEMAN19711}:

\begin{lemma}
    If $f(z) = \sum_{k=1}^l P_k(z)e^{\lambda_k z}$ where $P_k$ is a polynomial of degree $\rho_k-1$,
    the number of zeros in the complex plane in any ball of radius $R$ is bounded by $3(n_0-1)+4R\Delta$
    where $n_0 = \sum_{k=1}^l \rho_k$ and $\Delta = max_{k} |\lambda_k|$. \\
\end{lemma}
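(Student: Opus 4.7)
The plan is to prove this via Jensen's formula, the standard tool for counting zeros of an entire function. Recall that for $g$ entire with $g(0)\neq 0$, Jensen's formula yields $N(r)\log(R/r) \leqslant \log M(R) - \log|g(0)|$, where $N(r)$ is the number of zeros in $|z|\leqslant r$ and $M(R)=\max_{|z|=R}|g(z)|$. Taking $R=2r$ gives $N(r)\log 2 \leqslant \log M(2r) - \log|g(0)|$, so I would reduce the task to bounding $M(2R)$ from above and $|f|$ at a carefully chosen base point from below. A harmless translation $z\mapsto z-z_0$ lets me always assume the disc is centered at a point where $f$ does not vanish.

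The upper bound is routine: since $|e^{\lambda_k z}|\leqslant e^{2R\Delta}$ on $|z|=2R$ and each $P_k$ has degree less than $n_0$, one obtains
\[
    \log M(2R) \leqslant 2R\Delta + \log\!\left(\textstyle\sum_k \|P_k\|\,(2R)^{\rho_k-1}\right).
\]
Dividing by $\log 2$ produces the $4R\Delta$ contribution (since $2/\log 2 < 4$ with room to spare), together with lower-order terms depending only on the coefficient norms.

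The lower bound on $|f|$ at a base point is the technical heart. The key fact is that $f$ is a solution of the linear ODE with constant coefficients $\prod_k(D-\lambda_k)^{\rho_k}f=0$, where $D=d/dz$; this ODE has order exactly $n_0$, so if $f$ vanished to order $n_0$ at some point, uniqueness of ODE solutions would force $f\equiv 0$. Equivalently, the functions $\{z^{j}e^{\lambda_k z}: 0\leqslant j\leqslant \rho_k-1,\ 1\leqslant k\leqslant l\}$ are linearly independent and span an $n_0$-dimensional space. I would use this to find, within or just outside the disc of radius $R$, a point $z_0$ at which $|f(z_0)|$ is bounded below by a fixed constant (depending polynomially on $n_0$) times the maximum coefficient norm. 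The additive $3(n_0-1)$ term in the final bound comes from the $n_0$ possible ``wasted'' orders of vanishing at the chosen center.

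The main obstacle is extracting the specific constants $3$ and $4$ rather than the softer $O(n_0)+O(R\Delta)$. This demands (i) a precise selection of the comparison radii in Jensen's formula so the logarithm on the left is exactly $\log 2$, (ii) a Vandermonde-type identity relating the first $n_0$ Taylor coefficients of $f$ at $z_0$ to the $P_k$'s (which uses the linear independence, over polynomials, of the distinct exponentials $e^{\lambda_k z}$) in order to get a lower bound on $|f(z_0)|$ independent of the sizes of the $\lambda_k$, and (iii) absorbing the polylogarithmic $\log R$ residue into either the linear-in-$R$ piece or the additive constant. Tijdeman's original argument does exactly this bookkeeping; my proposal is to follow that route while keeping the three ingredients above conceptually separate.
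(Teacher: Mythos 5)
There is no proof of this lemma in the paper to compare against: the paper states it as a citation to Tijdeman~\cite{TIJDEMAN19711} and does not reproduce an argument. So what follows is an assessment of your sketch on its own terms.

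Your high-level plan (Jensen's formula for an upper bound on zero count, plus a pointwise lower bound on $|f|$ coming from linear independence of the functions $z^j e^{\lambda_k z}$) is in the right spirit, and it is honest that you flag the constants as the hard part. But the difficulty you defer to ``bookkeeping'' is in fact a genuine gap in the argument as written. In the estimate
\[
\log M(2R) \leqslant 2R\Delta + \log\!\Bigl(\textstyle\sum_k \|P_k\|\,(2R)^{\rho_k-1}\Bigr),
\]
the second term contains a contribution of order $(n_0-1)\log(2R)$. Since $R\Delta$ is scale-invariant but $R$ itself is not, one may have $\Delta$ very small and $R$ very large, and then $(n_0-1)\log(2R)$ is not dominated by $4R\Delta$; nor is it $O(n_0)$. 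Dividing by $\log 2$ does not save you: you get a term growing unboundedly in $R$ that fits neither into the $4R\Delta$ budget nor into the additive $3(n_0-1)$ budget. The only way Jensen's formula can close is if the lower bound $|f(z_0)|$ at the base point is shown to scale like $R^{n_0-1}$ times the coefficient norm (so the $\log R$ terms cancel), and your sketch gives no mechanism for that: the ODE-order/linear-independence fact only shows $f\not\equiv 0$, it gives no quantitative lower bound with the required $R$-dependence. This is precisely where Tijdeman's proof departs from the naive Jensen argument (he works with a more careful normalization and a quantitative non-degeneracy lemma, comparing discs at a different ratio than $2$), and it is not recoverable by the three-ingredient plan as stated. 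As it stands the proposal establishes a bound of the form $c_1 R\Delta + c_2 (n_0-1)\log(2+R) + c_3 (n_0-1)$ rather than the claimed $3(n_0-1)+4R\Delta$.
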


In our case, the bound $N$ on the number of zeroes is clearly computable from $A$. Let $c\neq\mathbf{0}$,
$0\leqslant k\leqslant N$ and assume that $f_c$ has $k$ nontangeantial zeros
(the tangeantial zeros will play no role) $0<t_1<\cdots<t_k$ in $[0,\tau]$. Then, depending
on the sign of $f_c(0)$, and noting that the sign of $f_c$ changes at each $t_i$, we have that
\begin{align*}
    \beta_c
        &=\pm\left(\int_0^{t_1}e^{-At}b\dd t+\sum_{i=1}^{k-1}\int_{t_i}^{t_{i+1}}(-1)^i\dd t
            +(-1)^k\int_{t_k}^{\tau}e^{-At}b\dd t\right)\\
        &=\pm A^{-1}\left(I_n+2\sum_{i=1}^k(-1)^ie^{At_i}-(-1)^ke^{-A\tau}\right)b\\
        &=\pm R_k(t_1,\ldots,t_k)
\end{align*}
which is also a (vector of) exponential polynomials. We can now proceed as in the proof of \Cref{prop:decide_real_eigen_rexp}
and write a formula in $\RRexpsin$ to express that a target $y$ is on the border. The crucial
point is that the exponentials in $e^{At_i}$ will not only involve the real exponential but also
some sine and cosine. This is where the boundedness is crucial: since $t_i\leqslant\tau$, we only
need bounded sine and cosine in our formulas. Also note that since $\tau$ is algebraic, we can
write $\tau$ in the formulas (this is necessary to express $e^{-A\tau}$); we note that since we are
working in $\RRexpsin$, we could in fact allow more general $\tau$ than just algebraic numbers.
This shows that deciding the border reduces to deciding a formula in $\RRexpsin$.

\section{Proof of Theorem~\ref{th:lti_hard_skolem}}\label{apx:lti_hard_skolem}

Let $c,A,b$ be an instance of the Continuous Skolen problem. Let $f(t)=c^Te^{At}b$,
the problem asks whether $f$ has any zero at $t\geqslant 0$. Without loss of generality,
we can assume that $c^Tb\geqslant0$ by changing $b$ into $-b$.
Now let $u=(1,\ldots,1)$, $B\in\QQ^{n\times n}$ be such that $Bu=b$ and $U=\set{u}$, and observe that by definition,
\[
    \Reach(A,AB,U)
        =\set*{\int_0^Te^{At}ABu\dd t:T\geqslant 0}
        =\set*{(e^{At}-I_n)b:t\geqslant 0}.
\]
But notice that for any $t\geqslant 0$,
\[
    c^T(e^{At}-I_n)b
        =c^Te^{At}b-c^Tb
        =f(t)-c^Tb.
\]
Hence if we define the set $Y=\set{x\in\RR^n:c^Tx\leqslant -c^Tb}$, which is a hyperplane, then
$Y$ is reachable if and only if there exists $t\in\RR^n$ such that $f(t)\leqslant 0$.
But since $f(0)=c^Tb\geqslant 0$ by assumption, this last condition is equivalent to the existence
of a zero by continuity. This shows that Skolem instance $(c,A,b)$ is positive if and only the
Set Reachability instance $(A,AB,U,Y)$ is positive.

Note that we can easily modify the instance to further strengthen the result like in the proof
of \Cref{th:lti_nontangen_hard_skolem}. More precisely, we can ensure that the LTI instance is
stable and that the set $Y$ is compact convex.

\section{Details on Figure~\ref{fig:ex_reach2d}}\label{sec:details_fig_reach}

Since all eigenvalues of $A$ are negative, it is stable. Furthermore, one checks that $b_1$ and $Ab_1$
are linearly independent, hence $(A,b_1)$ is controllable. We can apply \Cref{thm:bdry_descrptn}
to get the description of the boundary:
\begin{align*}
    \partial\Reach(A,b_1)
        &=\set*{\int_0^{\infty} e^{A t}b\sgn(c^Te^{A t}b)\dd t: c \in \RR^2\setminus \{\mathbf{0}\}}\\
        &=\Bigg\{\int_0^{\infty} \begin{bmatrix}e^{-\frac{t}{2}}&0\\0&e^{-\tfrac{t}{3}}\end{bmatrix}
            \begin{bmatrix}1\\1\end{bmatrix}\times\\
        &\hspace{0.5cm}\sgn\left(\begin{bmatrix}c_1&c_2\end{bmatrix}\begin{bmatrix}e^{-\frac{t}{2}}&0\\0&e^{-\tfrac{t}{3}}\end{bmatrix}
            \begin{bmatrix}1\\1\end{bmatrix}\right)\dd t: c_1,c_2 \in \RR\setminus \{0\}\Bigg\}\\
        &=\set*{\int_0^{\infty}\begin{bmatrix}e^{-\tfrac{t}{2}}\\e^{-\tfrac{t}{3}}\end{bmatrix}
            \sgn\left(c_1e^{-\tfrac{t}{2}}+c_2e^{-\tfrac{t}{3}}\right)\dd t: c_1,c_2 \in \RR\setminus \{0\}}.
\end{align*}
Given $c_1,c_2$ nonzero, observe that
\[
    c_1e^{-\tfrac{t}{2}}+c_2e^{-\tfrac{t}{3}}=0
    \;\Leftrightarrow\;
    1+\tfrac{c_2}{c_1}e^{\tfrac{t}{6}}=0
    \;\Leftrightarrow\;
    t=6\ln\tfrac{-c_1}{c_2}.
\]
Since only the ratio $c_1/c_2$ is important and $c_2$ must be nonzero, we can parametrize it as
$c_2=1$ and $c_1=-\alpha$, where $\alpha\in(1,+\infty)$, so that the only possible solution becomes
$t=t_1:=6\ln \alpha\geqslant0$.
Now there are two cases:
\begin{itemize}
    \item if $c_1/c_2\geqslant -1$, then there is no solution and the integral becomes
        \[
            \int_0^{\infty}\begin{bmatrix}e^{-\tfrac{t}{2}}\\e^{-\tfrac{t}{3}}\end{bmatrix}\sgn(c_1+c_2)\dd t
                =\begin{bmatrix}2\\3\end{bmatrix}\sgn(c_1+c_2);
        \]
    \item  if $c_1/c_2\leqslant -1$, then we can split the integral into two parts (the sign must change):
        \begin{align*}
            \int_0^{t_1}&\begin{bmatrix}e^{-\tfrac{t}{2}}\\e^{-\tfrac{t}{3}}\end{bmatrix}\sgn(c_1+c_2)\dd t
                -\int_{t_1}^{\infty}\begin{bmatrix}e^{-\tfrac{t}{2}}\\e^{-\tfrac{t}{3}}\end{bmatrix}\sgn(c_1+c_2)\dd t\\
                &=\begin{bmatrix}2-4e^{-t_1/2}\\3-6e^{-t_1/3}\end{bmatrix}\sgn(c_1+c_2)\\
                &=\begin{bmatrix}2-4e^{-3\ln\alpha}\\3-6e^{-2\ln\alpha}\end{bmatrix}\sgn(c_1+c_2)\\
                &=\begin{bmatrix}2-4\alpha^{-3}\\3-6\alpha^{-2}\end{bmatrix}\sgn(c_1+c_2).
        \end{align*}
\end{itemize}
Hence the boundary of the reachable set is
\[
    \partial\Reach(A,b_1)=\set*{\pm\begin{bmatrix}2-4\alpha^{-3}\\3-6\alpha^{-2}\end{bmatrix}:\alpha\in[1,\infty)}
        \cup\set*{\pm\begin{bmatrix}2\\3\end{bmatrix}}.
\]
The second reachable set is symmetric with respect to the vertical axis, hence we obtain
\[
    \partial\Reach(A,b_2)=\set*{\pm\begin{bmatrix}2-4\alpha^{-3}\\6\alpha^{-2}-3\end{bmatrix}:\alpha\in[1,\infty)}
        \cup\set*{\pm\begin{bmatrix}2\\-3\end{bmatrix}}.
\]
In order to describe the border of $\partial\Reach(A,B)$, we consider the following
question: given a nonzero vector $\tau$, find the (unique by strict convexity) maximizer
in $\partial\Reach(A,b_1)$ in direction $\tau$:
\[
    x_\tau:=\argmax_{x\in \partial\Reach(A,b_1)}\Angle{x,\tau}.
\]
Observe that we have in fact already computed this point because the description of the border by
\Cref{thm:bdry_descrptn} is in the form of maximizers. If we write $\tau=(c_1,c_2)$ 
as above, then
\[
    x_\tau=\begin{cases}
        \begin{bmatrix}2+4(c_1/c_2)^{-3}\\3-6(c_1/c_2)^{-2}\end{bmatrix}\sgn(c_1+c_2)
            &\text{ if }c_1/c_2\leqslant -1\\
        \begin{bmatrix}2\\3\end{bmatrix}\sgn(c_1+c_2)&\text{ otherwise}.
        \end{cases}.
\]
A similar computation for $\partial\Reach(A,b_2)$ gives
\[
    x_\tau'=\begin{cases}
        \begin{bmatrix}2+4(c_1/c_2)^{-3}\\6(c_1/c_2)^{-2}-3\end{bmatrix}\sgn(c_1-c_2)
            &\text{ if }c_1/c_2\geqslant 1\\
        \begin{bmatrix}2\\-3\end{bmatrix}\sgn(c_1-c_2)
            &\text{ otherwise}
        \end{cases}.
\]
Noting that $\sgn(c_1+c_2)=\sgn(c_1-c_2)$ whenever $|c_1/c_2|\geqslant1$, 
the maximizer $y_\tau$ for $\partial\Reach(A,B)$ is
\begin{align*}
    y_\tau=x_\tau+x_\tau'
        &=\begin{cases}
            \begin{bmatrix}4+4(c_1/c_2)^{-3}\\-6(c_1/c_2)^{-2}\end{bmatrix}\sgn(c_1+c_2)
            &\text{if }c_1/c_2\leqslant -1\\
            \begin{bmatrix}4-4(c_1/c_2)^{-3}\\6(c_1/c_2)^{-2}\end{bmatrix}\sgn(c_1+c_2)
            &\text{otherwise}
        \end{cases}\\
        &=\begin{bmatrix}4-4(c_1/c_2)^{-3}\sgn(c_1/c_2)\\6(c_1/c_2)^{-2}\sgn(c_1/c_2)\end{bmatrix}\sgn(c_1+c_2)
\end{align*}
for $|c_1/c_2|\geqslant1$.

\section{Details on the second example}\label{sec:details_ex2_reach}

We consider the case where
\[
    A=\begin{bmatrix}-1&0\\0&-\sqrt{2}\end{bmatrix},
    \quad
    b_1=\begin{bmatrix}1\\-1\end{bmatrix},
    \quad
    b_2=\begin{bmatrix}1\\1\end{bmatrix},
    \quad
    B=\begin{bmatrix}1&1\\-1&1\end{bmatrix}.
\]
Note that $b_1$ and $b_2$ are the columns of $B$. Since all eigenvalues of $A$ are negative, it is stable.
Furthermore, one checks that $b_1$ and $Ab_1$
are linearly independent, hence $(A,b_1)$ is controllable, and similarly for $(A,b_2)$.
We can apply \Cref{thm:bdry_descrptn} to get the description of the boundary:
\begin{align*}
    \partial\Reach(A,b_1)
        &=\set*{\int_0^{\infty} e^{A t}b\sgn(c^Te^{A t}b)\dd t: c \in \RR^2\setminus \{\mathbf{0}\}}\\
        &=\Bigg\{\int_0^{\infty} \begin{bmatrix}e^{-t}&0\\0&e^{-\sqrt{2}t}\end{bmatrix}
            \begin{bmatrix}1\\1\end{bmatrix}\times\\
        &\hspace{0.5cm}\sgn\left(\begin{bmatrix}c_1&c_2\end{bmatrix}\begin{bmatrix}e^{-t}&0\\0&e^{-\sqrt{2}t}\end{bmatrix}
            \begin{bmatrix}1\\1\end{bmatrix}\right)\dd t: c \in \RR^2\setminus \{\mathbf{0}\}\Bigg\}\\
        &=\set*{\int_0^{\infty}\begin{bmatrix}e^{-t}\\e^{-\sqrt{2}t}\end{bmatrix}
            \sgn\left(c_1e^{-t}+c_2e^{-\sqrt{2}t}\right)\dd t: c \in \RR^2\setminus \{\mathbf{0}\}}.
\end{align*}
Given $c_1,c_2$ nonzero, observe that
\[
    c_1e^{-t}+c_2e^{-\sqrt{2}t}=0
    \;\Leftrightarrow\;
    1+\tfrac{c_2}{c_1}e^{(1-\sqrt{2})t}=0
    \;\Leftrightarrow\;
    t=\tfrac{1}{\sqrt{2}-1}\ln\tfrac{-c_2}{c_1}.
\]
Since only the ratio $c_2/c_1$ is important and $c_1$ must be nonzero, we can parametrize it as
$c_1=1$ and $c_2=-\alpha$, where $\alpha\in(1,+\infty)$, so that the only possible solution becomes
$t=t_1:=\tfrac{1}{\sqrt{2}-1}\ln \alpha\geqslant0$.
Now there are two cases:
\begin{itemize}
    \item if $c_2/c_1\geqslant -1$, then there is no solution and the integral becomes
        \[
            \int_0^{\infty}\begin{bmatrix}e^{-t}\\e^{-\sqrt{2}}\end{bmatrix}\sgn(c_1+c_2)\dd t
                =\begin{bmatrix}1\\\tfrac{1}{\sqrt{2}}\end{bmatrix}\sgn(c_1+c_2);
        \]
    \item  if $c_2/c_1<-1$, then we can split the integral into two parts (the sign must change):
        \begin{align*}
            \int_0^{t_1}&\begin{bmatrix}e^{-t}\\e^{-\sqrt{2}t}\end{bmatrix}\sgn(c_1+c_2)\dd t
                -\int_{t_1}^{\infty}\begin{bmatrix}e^{-t}\\e^{-\sqrt{2}t}\end{bmatrix}\sgn(c_1+c_2)\dd t\\
                &=\begin{bmatrix}1-e^{-t_1}\\\tfrac{1}{\sqrt{2}}-\sqrt{2}e^{-\sqrt{2}t_1}\end{bmatrix}\sgn(c_1+c_2)\\
                &=\begin{bmatrix}1-e^{-\tfrac{1}{\sqrt{2}-1}\ln\alpha}\\
                    \tfrac{1}{\sqrt{2}}-\sqrt{2}e^{-\tfrac{\sqrt{2}}{\sqrt{2}-1}\ln\alpha}\end{bmatrix}\sgn(c_1+c_2)\\
                &=\begin{bmatrix}1-2\alpha^{\tfrac{1}{1-\sqrt{2}}}\\
                    \tfrac{1}{\sqrt{2}}-\sqrt{2}\alpha^{\frac{\sqrt{2}}{1-\sqrt{2}}}\end{bmatrix}\sgn(c_1+c_2).
        \end{align*}
\end{itemize}
Hence the boundary of the reachable set is
\[
    \partial\Reach(A,b_1)=\set*{\pm\begin{bmatrix}1-2\alpha^{\tfrac{1}{1-\sqrt{2}}}\\
                    \tfrac{1}{\sqrt{2}}-\sqrt{2}\alpha^{\frac{\sqrt{2}}{1-\sqrt{2}}}\end{bmatrix}:\alpha\in[1,\infty)}
        \cup\set*{\pm\begin{bmatrix}1\\\tfrac{1}{\sqrt{2}}\end{bmatrix}}.
\]
The second reachable set is symmetric with respect to the vertical axis, hence we obtain
\[
    \partial\Reach(A,b_2)=\set*{\pm\begin{bmatrix}1-2\alpha^{\tfrac{1}{1-\sqrt{2}}}\\
                    \sqrt{2}\alpha^{\frac{\sqrt{2}}{1-\sqrt{2}}}-\tfrac{1}{\sqrt{2}}\end{bmatrix}:\alpha\in[1,\infty)}
        \cup\set*{\pm\begin{bmatrix}1\\-\tfrac{1}{\sqrt{2}}\end{bmatrix}}.
\]
In order to describe the border of $\partial\Reach(A,B)$, we consider the following
question: given a nonzero vector $\tau$, find the (unique by strict convexity) maximizer
in $\partial\Reach(A,b_1)$ in direction $\tau$:
\[
    x_\tau:=\argmax_{x\in \partial\Reach(A,b_1)}\Angle{x,\tau}.
\]
Observe that we have in fact already computed this point because the description of the border by
\Cref{thm:bdry_descrptn} is in the form of maximizers. If we write $\tau=(c_1,c_2)$ 
as above, then
\[
    x_\tau=\begin{cases}
        \begin{bmatrix}1-2(-c_2/c_1)^{\tfrac{1}{1-\sqrt{2}}}\\
            \tfrac{1}{\sqrt{2}}-\sqrt{2}(-c_2/c_1)^{\frac{\sqrt{2}}{1-\sqrt{2}}}\end{bmatrix}\sgn(c_1+c_2)
            &\text{ if }c_2/c_1\leqslant -1\\
        \begin{bmatrix}1\\\tfrac{1}{\sqrt{2}}\end{bmatrix}\sgn(c_1+c_2)&\text{ otherwise}.
        \end{cases}.
\]
A similar computation for $\partial\Reach(A,b_2)$ gives
\[
    x_\tau'=\begin{cases}
        \begin{bmatrix}1-2(c_2/c_1)^{\tfrac{1}{1-\sqrt{2}}}\\
            \sqrt{2}(c_2/c_1)^{\frac{\sqrt{2}}{1-\sqrt{2}}}-\tfrac{1}{\sqrt{2}}\end{bmatrix}\sgn(c_1-c_2)
            &\text{ if }c_2/c_1\geqslant 1\\
        \begin{bmatrix}1\\-\tfrac{1}{\sqrt{2}}\end{bmatrix}\sgn(c_1-c_2)
            &\text{ otherwise}
        \end{cases}.
\]
Noting that $\sgn(c_1+c_2)=-\sgn(c_1-c_2)$ whenever $|c_2/c_1|\geqslant1$, 
the maximizer $y_\tau$ for $\partial\Reach(A,B)$ is
\begin{align*}
    y_\tau=x_\tau+x_\tau'
        &=\begin{cases}
            \begin{bmatrix}2-2(-c_2/c_1)^{\tfrac{1}{1-\sqrt{2}}}\\
                -\sqrt{2}(-c_2/c_1)^{\frac{\sqrt{2}}{1-\sqrt{2}}}\end{bmatrix}\sgn(c_1+c_2)
            &\text{if }c_2/c_1\leqslant-1\\
            \begin{bmatrix}2-2(c_2/c_1)^{\tfrac{1}{1-\sqrt{2}}}\\
                \sqrt{2}(c_2/c_1)^{\frac{\sqrt{2}}{1-\sqrt{2}}}\end{bmatrix}\sgn(c_1+c_2)
            &\text{otherwise}
        \end{cases}\\
        &=\begin{bmatrix}-2|c_2/c_1|^{\tfrac{1}{1-\sqrt{2}}}\\
            \sqrt{2}\left(1-|c_2/c_1|^{\frac{\sqrt{2}}{1-\sqrt{2}}}\right)\sgn(c_1/c_2)\end{bmatrix}\sgn(c_1+c_2)
\end{align*}
for $|c_2/c_1|\geqslant1$. 

}{}

\end{document}